\documentclass[reqno, 11pt]{amsart}

\usepackage{amsmath,amssymb,amsthm,amsfonts, amscd, url}
\usepackage{appendix}
\usepackage[numeric]{amsrefs}
\input xy
\xyoption{all}



\usepackage[dvipsnames]{xcolor}
\usepackage{graphics}
\usepackage{graphicx}


\pagestyle{plain}


\setlength{\oddsidemargin}{0cm}
\setlength{\evensidemargin}{0cm}
\setlength{\marginparwidth}{0in}
\setlength{\marginparsep}{0in}
\setlength{\marginparpush}{0in}
\setlength{\topmargin}{0in}
\setlength{\headheight}{0pt}
\setlength{\headsep}{0pt}
\setlength{\footskip}{.3in}
\setlength{\textheight}{9.2in}
\setlength{\textwidth}{6.5in}
\setlength{\parskip}{0.25pt}
\setlength{\parindent}{0.25in}


%
\DeclareMathOperator{\gen}{gen}
\DeclareMathOperator{\Av}{Av}

\DeclareMathOperator{\Hom}{Hom}
\DeclareMathOperator{\Aut}{Aut}
\DeclareMathOperator{\Supp}{Supp}

\DeclareMathOperator{\iw}{Iw}

\newcommand{\ve}{\mathbf{v}}


\newtheorem*{de}{Definition}
\newtheorem*{nthm}{Theorem}
\newtheorem*{nprop}{Proposition}
\newtheorem*{nlem}{Lemma}

\newtheorem*{nclaim}{Claim}



\newcommand{\be}[1]{\begin{eqnarray} \label{#1}}

\newcommand{\ee}{\end{eqnarray}}

\newcommand{\tpoint}[1]{\vspace{3mm}\par \noindent \refstepcounter{subsection}{\bf \thesubsection.}
  {\em #1.} }

\newcommand{\rpoint}[1]{\vspace{3mm}\par \noindent{\em {{#1.} }}}
\newcommand{\spoint}{\vspace{3mm}\par \noindent \refstepcounter{subsection}{\bf \thesubsection.} }


\numberwithin{equation}{section}


\newcommand{\xv}{\xi^{\vee}}

\newcommand{\av}{a^{\vee}}


\newcommand{\aw}{\mathbb{W}}
\newcommand{\dd}{\mathbf{d}}
\newcommand{\cc}{\mathbf{c}}

\newcommand{\up}{U}
\newcommand{\um}{U^-}

\newcommand{\umw}{U^{-, w}}


\renewcommand{\v}{\mathbf{v}}
\newcommand{\la}{\langle}
\newcommand{\ra}{\rangle}
\newcommand{\lv}{\Lambda^{\vee}}
\newcommand{\mv}{\mu^{\vee}}
\newcommand{\res}{\kappa}
\newcommand{\mf}[1]{\mathfrak{#1}}
\newcommand{\f}[1]{\mathfrak{#1}}
\newcommand{\rr}{\rightarrow}
\newcommand{\mc}[1]{\mathcal{#1}}
\newcommand{\wt}[1]{\widetilde{#1}}

\newcommand{\Lv}{\Lambda^{\vee}}
\renewcommand{\lv}{\lambda^{\vee}}

\newcommand{\K}{\mathcal{K}}
\newcommand{\zee} {\mathbb{Z}}
\newcommand{\C} {\mathbb{C}}

\renewcommand{\O}{\mathcal{O}}
\newcommand{\qn}{{\mathcal Q}_{v}}
\newcommand{\qnf}{{\mathcal Q}^{fin}_{v}}

\newcommand{\niw}{\textbf{n}^-}
\newcommand{\aiw}{\textbf{a}}
\renewcommand{\c}{\mathbf{c}}
\renewcommand{\b}{\mathbf{b}}


\newcommand{\wid}{\mathbf{1}_{\psi}} 

\newcommand{\me}[1]{\mc{M}_o(#1)}

\newcommand{\W}{\mathcal{W}}


\newcommand{\kk}{\kappa}
\newcommand{\nm}{n^{-}}

\begin{document}

\title{Unramified Whittaker Functions on $p$-adic Loop Groups}
\author{Manish M. Patnaik} 
\email{patnaik@ualberta.ca}

\maketitle

\begin{abstract} We define unramified Whittaker functions on the $p$-adic points of an affine Kac-Moody group, and establish an analogue of the Casselman-Shalika formula for these functions.
\end{abstract}

\setcounter{tocdepth}{1}
\tableofcontents

\section{Introduction}

\spoint Recently, convolution Hecke algebras on spaces of double cosets for $p$-adic loop groups have been defined (\cite{kap, bk, bkp, gau:rou}), and explicit formulas for certain special functions in this theory were obtained: the affine spherical function was computed in \cite{bkp}, and the affine Gindikin-Karpelevic has been computed in two different ways-- first in \cite{bfk} using a cohomological interpretation which is only available in positive characteristic, and then in \cite{bgkp} using a different group theoretical argument which works in all in characteristics. In this note we turn our attention to another construction within the same venue, the unramified Whittaker function. We prove the following explicit formula for these functions generalizing the work of Casselman and Shalika \cite{cs} for finite-dimensional groups (see Theorem \ref{cs-fin} for a reminder of their result in our notation). 

\begin{nthm} [see Theorem \ref{main}] Let $G$ be an untwisted affine, simple, simply-laced Kac-Moody group over a nonarchimedean local field whose underlying finite-dimensional root system is simply-laced of rank $\ell,$ and let $G^{\vee}$ be the (Langlands) dual group to $G$. Let $\lv$ be a dominant affine coweight of $G$ and $\pi^{\lv}$ the corresponding element in a split torus of $G. $ The affine Whittaker function $\W$ (see Definition \ref{whitfn}) takes the following values, \be{cs-intro} \W(\pi^{\lv}) = q^{  \la \rho, \lv \ra } \mf{m} \prod_{\av \in R^{\vee}_+} (1 - q^{-1} e^{-\av }) \chi_{\lv}, \ee where $q$ is the size of the residue field, $\rho$ is the co-character of $G^{\vee}$ whose value on each simple root of $G^{\vee}$ is 1, $R^{\vee}_+$ is the set of all positive roots of $G^{\vee}$,  $\chi_{\lv}$ is the Weyl-Kac character of the irreducible highest weight-representation of $G^{\vee}$ with highest weight $\lv$, and \be{m} \mf{m} = \prod_{i=1}^{\ell} \prod_{j=1}^{\infty} \frac{ 1 - q^{- m_i -1} e^{-j \cc} }{1 - q^{ - m_i} e^{- j \cc} } \ee with $\cc$ denoting the minimal imaginary root of $G^{\vee}$, and $m_i, \, i=1, \ldots, \ell$ being the exponents of the underlying finite-dimensional root system. \end{nthm}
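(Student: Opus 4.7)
The plan is to adapt the classical Casselman-Shalika strategy to the affine setting, substituting the affine Gindikin-Karpelevic formula of \cite{bgkp} for its finite counterpart and the Weyl-Kac character formula for the Weyl character formula. First I would view $\W(\pi^{\lv})$ as the value on $\sph \in I(\chi)$ of a Whittaker functional $\Omega$ attached to an unramified principal series $I(\chi)$, and expand $\sph$, in a suitable completion, in an affine Casselman-style basis $\{f_w\}_{w \in \aw}$ of Iwahori-fixed vectors indexed by the affine Weyl group $\aw$. This reduces the problem to an identity of the shape
\[ \W(\pi^{\lv}) = \sum_{w \in \aw} c_w(\chi) \, \Omega(f_w)(\pi^{\lv}) \]
and to the separate task of computing each $\Omega(f_w)(\pi^{\lv})$ and then resumming.

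Each such value I would obtain by induction on $\ell(w)$ via the intertwining operators $T_{s_i} \colon I(\chi) \to I(s_i \chi)$ attached to (finite and affine) simple reflections. Two classical ingredients drive the recursion: Whittaker functionals transform under $T_{s_i}$ by an explicit scalar, and $T_{s_i}$ acts on pairs of Iwahori-fixed vectors through a $2 \times 2$ matrix whose scalar entries are local Gindikin-Karpelevic factors, now available in the affine case from \cite{bgkp}. Iterating should yield a formula of the shape
\[ \Omega(f_w)(\pi^{\lv}) = q^{\la \rho, \lv \ra} \, e^{w \lv} \prod_{\substack{\av \in R^{\vee}_+ \\ w^{-1}\av < 0}} \frac{1 - q^{-1} e^{-\av}}{1 - e^{-\av}}, \]
where the imaginary-root contributions are encoded by the imaginary-root piece of the affine Gindikin-Karpelevic formula.

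Summing over $\aw$ and comparing with the Weyl-Kac formula
\[ \chi_{\lv} = \frac{\sum_{w \in \aw} (-1)^{\ell(w)} e^{w(\lv + \rho) - \rho}}{\prod_{\av > 0}(1 - e^{-\av})^{\operatorname{mult}(\av)}} \]
should then produce the desired identity. On the real-root side, the cancellation of denominators proceeds exactly as in the finite case and yields $\prod_{\av \in R^{\vee}_+}(1 - q^{-1} e^{-\av}) \chi_{\lv}$. On the imaginary side one develops a mismatch: the character formula carries multiplicity $\ell$ per $j \cc$, while the affine Gindikin-Karpelevic contribution of \cite{bgkp} organizes the imaginary roots through the invariant theory of the finite Weyl group and produces the exponents $m_i$. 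Collecting the resulting per-level, per-exponent terms assembles precisely the factor $\mf{m}$.

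The main obstacle will be convergence and the rigorous status of these formal manipulations: the sum over $\aw$ is genuinely infinite, and even the expansion of $\sph$ in Iwahori-fixed vectors has infinitely many terms. Following the framework already used in \cite{bkp, bgkp}, the natural remedy is to work throughout in a Looijenga-type completion of the group algebra of the coweight lattice of $G^{\vee}$ in which all $e^{w \lv}$ for $w \in \aw$ coexist and in which the above identities are equalities of formal series. A secondary technical point is the careful bookkeeping of imaginary roots through the affine simple-reflection step of the induction; this is precisely where the exponents $m_i$, and therefore the factor $\mf{m}$, are forced into the final formula.
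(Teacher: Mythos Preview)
Your overall shape---decompose the spherical Whittaker value into Iwahori pieces, run an intertwining-operator recursion, and resum against the Weyl-Kac formula---matches the paper's strategy. But two of your load-bearing steps are genuinely unavailable in the affine setting, and the paper works around them rather than through them.

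First, you expand $\sph$ in a Casselman-style basis $\{f_w\}$ and invoke the standard fact that Whittaker functionals transform by an explicit scalar under $T_{s_i}$. Both of these rest on uniqueness of Whittaker functionals (or, equivalently, the Jacquet-module characterization of the Casselman basis), which has not been established for $p$-adic loop groups; the paper says so explicitly and deliberately avoids it. In particular your closed formula $\Omega(f_w)(\pi^{\lv})=q^{\la\rho,\lv\ra}e^{w\lv}\prod_{\av>0,\,w^{-1}\av<0}\tfrac{1-q^{-1}e^{-\av}}{1-e^{-\av}}$ is exactly the kind of identity that Casselman's basis is engineered to produce; without that basis the recursion only gives $\W_{w,\lv}=q^{\la\rho,\lv\ra}T_w(e^{\lv})$ for the Demazure--Lusztig operator $T_w$, which is a sum over all $u\leq w$, not a single monomial. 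The paper then controls the infinite symmetrizer $\mc{P}=\sum_{w\in W}T_w$ via Cherednik's $v$-finiteness lemma (Proposition~\ref{sym:wd}) and an operator identity $\mc{P}=\mf{c}(v)\,\mf{D}_v\sum_{w}[w]\,\mf{D}^{-1}$; the Looijenga completion you mention is necessary but not sufficient---Cherednik's result is the delicate point. (A minor issue: your index set should be the affine Weyl group $W$ of $G$, not $\aw=\Lv\rtimes W$, which in this paper denotes the further ``affinized'' object.)

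Second, your mechanism for producing $\mf{m}$ is not what the paper does. The operator identity above only fixes $\W(\pi^{\lv})$ up to a $W$-invariant factor $\mf{c}(v)\in\C[v^{-1}][[e^{-\cc}]]$; the imaginary roots do not sort themselves into the exponents $m_i$ during the simple-reflection recursion. Instead the paper pins down $\mf{c}=\mf{m}$ by a separate limiting argument (Proposition~\ref{gk-whit}): for each fixed $\mv$ and $\lv$ sufficiently dominant, the coefficient $[e^{\mv-\lv}]\W(\pi^{\lv})$ agrees (up to $q^{\la\rho,\lv\ra}$) with $[e^{\mv}]\mc{G}$, and the affine Gindikin--Karpelevic formula $\mc{G}=\mf{m}\,\Delta$ from \cite{bgkp} then forces $\mf{c}=\mf{m}$. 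So the GK input enters once, at the end, to fix a single constant---not term-by-term inside the recursion as you propose.
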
 

Note both the similarity of this statement with the usual finite-dimensional Casselman-Shalika statement (see Theorem \ref{cs-fin}) and also the difference, namely the factor $\mf{m}$ interweaving the imaginary roots of the affine root system with the exponents of the underlying finite-dimensional one\footnote{The reader will also note a discrepancy between $\rho$ and $-\rho$ between these two formulas as well. This is accounted for by a choice of normalization of the measures involved as we will explain in \S \ref{sums:int}}. Its presence here is linked to a similar such appearance in the formulas for the affine Gindikin-Karpelevic and spherical functions.

\spoint In proving the above theorem, we were led to revisit the finite-dimensional Casselman-Shalika formula and find an argument for it circumventing certain standard facts  which are either not available or not yet available in the affine setting, e.g. longest elements in the Weyl group, uniqueness of Whittaker functionals.  In \S \ref{sec-fin}, we sketch a finite-dimensional proof along these lines. An important ingredient in this proof is a  recursion formula (\ref{whit:recur-body-1}) for Iwahori-Whittaker functions, one proof of which (see Appendix A) is based on integral formula of Harish-Chandra type (see Proposition \ref{hc-fla}) that links integrals over unipotent and compact groups.  In the affine setting, the analogous recursion result does hold though the proof give here in \S \ref{sec-rec} is more algebraic in form, utilizing certain intertwining and averaging operators. 

As with the original argument of Casselman-Shalika \cite{cs}, our proof uses crucially a passage to Iwahori-level objects. The work of Bernstein, Zelevinsky (and Lusztig) makes clear that certain Demazure-Lusztig type operators should govern the corresponding Iwahori-level combinatorics.  Actually, the connection between such operators and related $p$-adic groups may be traced a bit further back to Macdonald's fundamental computation of the spherical function for finite-dimensional groups \cite[Chap 4]{mac:mad}, where such operators are implicit.  Reinterpreting Macdonald's work using these operators was an important ingredient in our computation of the affine spherical functions (see \cite[\S7]{bkp}), as it allowed one to make the connection with some algebraic results of Cherednik \cite{cher:ma}. From this point of view, it was expected that an approach similar to the one taken in \cite[\S 6]{bkp} might be useful in computing Whittaker functions. However, the Demazure-Lusztig type operators used in the spherical case are not exactly the ones which are involved in the Whittaker picture.  We first learnt of the exact operators needed to compute Whittaker functions from \cite{bbl} (and a seminar talk by B. Brubaker), which provided an important impetus to the completion of this work. \footnote{The relation between the operators used here and those of \cite{bkp} is provided in \ref{T:T'}, as was observed already in \cite{bbl}.} The appearance of these operators in \emph{loc. cit} and our work is for the same reason-- to find a recursive description to Iwahori-Whittaker functions. However, whereas in \emph{loc. cit} a general uniqueness principle for Whittaker functionals is used to derive this recursion, here we proceed in a more direct manner as we have not yet established such a result in the affine setting. Note though that we do use a certain result (Lemma \ref{it-sum}(1)) which, in the finite-dimensional context, is a consequence of the uniqueness of Whittaker functions. It would be interesting to adapt the techniques of this paper to settings where uniqueness of Whittaker functions is known to fail.

Using the connection between certain Demazure-Lusztig operators and Iwahori-level Whittaker functions, the proof of the Casselman-Shalika formula is reduced to a combinatorial identity. Essentially the same ideas as in \cites{bkp, cher:ma} apply to establish this identity up to certain coefficient of proportionality (which turns out to be $\mf{m}$ from (\ref{m}) ). Let us note that, as in \cite{bkp}, to make the comparison between the algebraic and $p$-adic theories, one needs to invoke a rather subtle algebraic result of Cherednik (see Lemma \ref{sym:wd}) which ensures certain infinite symmetrizers have polynomial coefficients.  In the finite-dimensional case, the corresponding result is trivial. Moreover, in the finite-dimensional case another purely combinatorial argument (based on the existence of the longest element in the Weyl group) can be used to pin down the coefficient of proportionality. In the affine case,we approach the problem differently by making use of a limiting argument connecting the Whittaker and  Gindikin-Karpelevic integrals. Using our previous computation of the Gindikin-Karpelevic integral \cite{bgkp}, the unknown factor can be determined. A more refined combinatorial analysis should also be able to precise this factor and thereby avoid recourse to the affine Gindikin-Karpelevic formula (though one still needs to use the main finiteness result of \cite{bgkp} to even make sense of the Whittaker function).   

\tpoint{Acknowledgements} I would like to thank the organizers of the special semester at ICERM on "Automorphic Forms, Combinatorial Representation Theory, and Multiple Dirichlet Series" for providing a stimulating atmosphere in which part of this work was carried out, and especially G. Chinta, B. Brubaker, and A. Diaconu for several helpful and motivating discussions.  We also acknowledge IHÉS and the Max Planck Institute for Mathematics for providing excellent working conditions where parts of this work were written.

This paper owes a large intellectual debt to the works \cite{bkp, bgkp}, and the author owes an equally substantial debt to his collaborators  A. Braverman, H. Garland, and D. Kazhdan for numerous discussions and shared insights over the last few years, many of which have informed this work. In particular, many aspects of this work actually first arose in discussions with A. Braverman. 

\section{Finite Dimensional Case}  \label{sec-fin}

\spoint Let $\K$ be a non-archimedean local field with ring of integers $\O.$ Let $\pi \in \O$ be a uniformizer, and denote the residue field by $\kk:= \O / \pi \O,$ which is a finite field of size $q.$  Usually we shall denote algebraic groups (or functors of groups) over $\mc{K}$ (or a subring of $\mc{K}$) by boldface letters $\mathbf{X}, \mathbf{G}$ etc., and their sets of $\mc{K}$-points by $X,G$ etc.

\spoint \label{sec:fin-dim} Let $\mathbf{G}_o$ be a split, simple, and simply connected algebraic group (defined over $\zee$) and let $\mf{g}_o$ be its Lie algebra. Set $G_o=\mathbf{G}_o(\mc{K})$. Let $\mathbf{A}_o\subset \mathbf{G}_o$ be a maximal split torus, of rank $\ell$; we denote its character lattice by $\Lambda_o$ and its
cocharacter lattice by $\Lambda_o^{\vee}$; note that since we have assumed that
$\mathbf{G}_o$ is simply connected, $\Lambda_o^{\vee}$ is also the coroot lattice of $\mathbf{G}_o$.  For any $x \in \mc{K}^*$ and $\lv \in \Lv_o$
we set $x^{\lv}=\lv(x)\in A_o$. Let us choose a pair $\mathbf{B}_o,\mathbf{B}_o^-$ of opposite Borel subgroups such that $\mathbf{B}_o\cap\mathbf{B}_o^-=\mathbf{A}_o$. Let $\mathbf{U}_o$ and $\mathbf{U}_o^-$ denote the unipotent radicals of $\mathbf{B}_o$ and $\mathbf{B}_o^-$ respectively, so that we have semi-direct product decompositions $\mathbf{B}_o = \mathbf{A}_o \mathbf{U}_o$ and $\mathbf{B}_o= \mathbf{A}_o \mathbf{U}_o^-.$ Let  $R_o$ be the set of roots of $\mathbf{G}_o$ and by $R_o^{\vee}$ the set of coroots. Similarly $R_{o,+}$ (resp. $R^{\vee}_{o,+}$) will denote the set of positive roots (resp. of positive coroots), and $\Pi_o$ (resp. $\Pi_o^{\vee})$ the set of simple roots (resp. simple coroots). For each $a \in R_o$ we let $\mathbf{U}_a$ denote the corresponding one-parameter root subgroup (so $U_o$ is generated by $U_a$ for $a \in \Pi_o$). Let $\rho_o \in \Lambda_o$ denote the element such that $\la \rho_o, a^{\vee} \ra =1$ for all $a \in \Pi_o^{\vee}$ where $\la \cdot, \cdot \ra: \Lambda_o \times \Lv_o \rr \C$ denotes the natural pairing. Let $W_o$ be the Weyl group of $G,$ which is a finite Coxeter group with generators simple reflections $w_i:= w_{a_i}$  corresponding to $a_i \in \Pi_o.$ For a reductive group $\mathbf{H}$ over $\mc{K}$ we denote by $H^{\vee}$ the Langlands dual group defined over $\C$ defined by exchanging the root and coroot data of $\mathbf{H}.$

\spoint Let $K_o = \mathbf{G}_o(\O)$ be a maximal compact subgroup of $G_o.$ We set $A_{o, \O}:= A_o \cap K_o,$ and note that $A_o$ is a direct product $A_o(\O)A_o'$ where $\Lv \stackrel{\sim}{\rr} A'_o$ is the isomorphism which sends $\lv \in \Lv$ to $\pi^{\lv} \in A_o.$ We have the Iwasawa decomposition,  \be{iwa:+} G_o  =  K_o \, B_o  = K_o \, A_o \, U_o \ee with respect to which we may write  $g \in G_o$ as \be{iwa:+,e} g = k \pi^{\mv} u \text{ where } k \in K_o, \mv \in \Lv_o, u \in U_o. \ee  In such a decomposition, $\mv$ is uniquely defined, but the $K_o$ component $k$ is only well-defined modulo $K_o \cap B_o.$ Hence, we have well-defined maps \be{iwa:A+} \label{iwa:K+} \begin{array}{lcr}  \iw_{A_o}: G_o \rr A'_o, \; g \mapsto \pi^{\mv}  & \text{ and } & \iw_{K_o}: G_o \rr K_o/ K_o \cap B_o, \,   g \mapsto k \end{array}, \ee whenever $g$ is written as in (\ref{iwa:+,e}).

\spoint Denote by $\varpi_o: K_o \rr \mathbf{G}(\kk)$ the natural reduction map to the group over the residue field, and define the Iwahori subgroups of $K_o$ as follows, \be{iwahoris} \begin{array}{lcr} I_o:= \varpi^{-1}(\mathbf{B}(\kk)) &\text{ and } & I_o^- := \varpi^{-1}(\mathbf{B}^-(\kk)) \end{array}. \ee  Let us write $U_{o, \O} := U_o(\O)$ and \be{U:pi} U_{o, \pi}:= U_o(\pi \O) = \ker( \varpi_o: U_o(\O) \rr \mathbf{G}(\kk) ) \ee and similarly define $\um_{o, \pi}$ and $\um_{o, \O}.$  Consider now the following map $\gamma_o: \um_o \rr W_o$ defined by the commutative diagram, \be{hc-map} \xymatrix{ \um_o   \ar@/_/@{.>}[drr]_{\gamma_o} \ar[r] & \um_{o, \O} \setminus \um_o \ar[r]^{\iw_{K_o}} & \um_{o, \O} \setminus K_o / K_o \cap B_o \ar[d]^{\varpi_o}  \\
 & & \mathbf{U}^-_o(\kk) \setminus \mathbf{G}(\kk) / \mathbf{B}_o(\kk) = W_o
} \ee where the Bruhat decomposition is used to identity (as sets) $ \mathbf{U}^-_o(\kk) \setminus \mathbf{G}(\kk) / \mathbf{B}_o(\kk)$ with $W_o$ in the right-hand corner of the diagram. We have a decomposition $\um_o = \sqcup_{w \in W_o} U_o^{-, w},$ where we set           \be{Uo:w} U^{-, w}_o := \gamma_o^{-1}(w). \ee   This decomposition (or rather an affine version of it) already appeared in the work \cite[\S3]{bgkp} where it was used to prove certain certain finiteness results for $p$-adic loop groups.

\spoint \label{chars} For each $a \in \Pi_o$, let $\psi_{a}: U_{-a} \rr \C^*$ be a character of the one-dimensional additive root group. There is an isomorphism of abelian groups  \be{ab:um} \um_o/ [ \um_o, \um_o] \cong \prod_{a \in \Pi_o} U_{-a}, \ee  and so we may use the characters $\{ \psi_a \}_{a \in \Pi_o}$ to construct a map $\psi: \prod_{a \in \Pi_o} U_{-a} \rr \C.$ We denote the composition with the projection by the same letter, \be{phi:aff} \psi: \um_o \rr \um_o/ [ \um_o, \um_o] \rr \C^*.\ee We say that $\psi$ is \emph{principal} if all of the $\psi_a$ are non-trivial, and we say that $\psi$ is \emph{unramified} if all the $\psi_a$ are trivial on $\O$ but non-trivial on $\pi^{-1} \O \subset \mc{K}.$  

\spoint Let $\C[\Lv_o]$ denote the group algebra of the cocharacter lattice of $G_o$, with generators $e^{\lv}, \lv \in \Lv_o.$ Also, let $d\nm$ denote the Haar measure on $U^-_o$ which which gives the compact subgroup $K_o \cap U^-_o$ volume $1.$ Then we define the \emph{unramified Whittaker function} $\W_o: G \rr \C[\Lv_o]$ via \be{whit:fin} \W_o(g) = \int_{U^-_o} \Phi_{\rho_o}(g \nm) \psi(\nm) d\nm \ee where if $\nu \in \Lambda_o$ and $\iw_{A_o}(g) = \pi^{\mv}, \; \mv \in \Lv_o$ we set \be{Phi:defn} \Phi_{\nu}(g) = e^{\mv}q^{\la \nu, \mv \ra} \in \C[\Lv_o]. \ee We note immediately that $\W_o$ is left $K_o$-invariant and satisfies the following property, henceforth referred to as \emph{ right $(\um, \psi)$-invariance} \be{W:left} \W_o(g \nm) = \psi(\nm) \W_o(g) \text{ if } \nm \in U^-_o. \ee  Thus $\W_o$ is determined by its values on the $\Lv \cong A_o';$ moreover, a standard argument shows that left $K_o$-invariance combined with right $(\um, \psi)$-invariance implies $\W_o(\pi^{\lv}) =0$ unless $\lv \in \Lv_+.$

\tpoint{Whittaker Integrals and Whittaker Sums} \label{sums:int} We now try to explain the discrepancy between "$\rho$" and "$-\rho$" that occurs within the affine and finite Casselman-Shalika formula (see Theorems \ref{cs-fin} and \ref{main}). Let $\lv \in \Lv_{o, +}$ and $\mv \in \Lv_o$ and consider the set \be{M-two} M_o(\lv; \mv)= K_o \setminus K_o \pi^{\lv} \um_o \cap K \pi^{\mv} \up_o. \ee Since $\lv \in \Lv_+$ we have a well-defined map \be{niw-fin} \niw: K_o \pi^{\lv} \um_o \rr \um_{o, \O} \setminus \um_o \ee (see comments after (\ref{niw:def-a}) below), and hence a map $\niw: M_o(\lv; \mv)  \rr \um_{o, \O} \setminus \um_o.$
Now, starting from (\ref{whit:fin}) for $g= \pi^{\lv},\, \lv \in \Lv_+$ a simple change of variables (namely, $n^{-} \mapsto \pi^{-\lv} \nm \pi^{\lv}$) shows  \be{W:lv} \W_o(\pi^{\lv}) &=& \int_{\um_o} \Phi_{\rho_o}(\pi^{\lv} \nm) \psi(\nm) d\nm = \label{whit:tr} \Phi_{- \rho_o}(e^{\lv}) \int_{\um_o} \Phi_{\rho_o}(\nm) \psi( \pi^{-\lv} \nm \pi^{\lv} ) d\nm. \ee  By the definition of the Haar measure $d\nm,$ the last expression can be written as a sum, 
\be{whit:fin-sum} \W_o(\pi^{\lv}) &=& \Phi_{- \rho_o}(e^{\lv})  \sum_{\mv \in \Lv} e^{\mv} q^{ \la \rho, \mv \ra} \sum_{x \in M_o(0; \mv) } \psi(\pi^{-\lv} \niw(x) \pi^{\lv} ). \ee There is a bijection $r_{\lv}: M_o(0;\mv) \rr M_o(\lv; \mv + \lv)$ such that \be{r:bij} \niw(r_{\lv}(x)) = \pi^{\lv} \niw(x) \pi^{-\lv}, \text{ for } x \in M_o(0;\mv).\ee Hence, the last sum may also be written as  
\be{whit:fin-sum2} (\ref{whit:fin-sum}) &=& \Phi_{- \rho_o}(e^{\lv})  \sum_{\mv \in \Lv} e^{\mv} q^{ \la \rho, \mv \ra} \sum_{x \in M_o(\lv; \mv + \lv) } \psi(\niw( x) ) \\ &=& q^{ -\la 2 \rho_o, \lv \ra}
\sum_{\mv \in \Lv} e^{\mv} q^{ \la \rho, \mv \ra} \sum_{x \in M_o(\lv; \mv) } \psi(\niw( x) )\ee  Omitting the factor of $q^{ -\la 2 \rho_o, \lv \ra}$ from the latter sum, we may define \be{wfs:3} \wt{\W}_o(\pi^{\lv}) := \sum_{\mv \in \Lv} e^{\mv} q^{ \la \rho, \mv \ra} \sum_{x \in M_o(\lv; \mv) } \psi(\niw( x) ). \ee This has an integral expression similar to (\ref{whit:tr}) as well, but one in which we normalize the measure $d\nm$ so that $\pi^{-\lv} \um_{\O} \pi^{\lv}$ is assigned measure $1.$ 

In the affine setting, there is no natural Haar measure on the unipotent radical such that the change of variables formula (\ref{whit:tr}) holds, i.e. and we cannot compare the measure which assign volume $1$ to $\pi^{-\lv} \um_{\O} \pi^{\lv}$ and the one which assigns volume $1$ to $\um_{\O}$ (\emph{heuristically}, they should be related by a factor of $q^{ - \la 2 \rho, \lv \ra}$). On the other hand, using the finiteness results \cite[Theorem 1.9(1)]{bgkp}, sums such as (\ref{whit:fin-sum2}) and (\ref{wfs:3}) do make sense. We find (\ref{wfs:3}) more natural in the affine context, and take it as the basis of our definition of the affine Whittaker function (see Definition \ref{whitfn} below.)

\newcommand{\G}{\mc{G}}

\spoint Omitting the character $\psi$ and dividing by $\Phi_{- \rho_o}(e^{\lv})$ in (\ref{whit:tr}), one obtains the \emph{Gindikin-Karpelevic} integral, \be{gk:fin}   \mc{G}_o:=  \int_{U^-_o} \Phi_{\rho_o}(\nm) d\nm = \sum_{\mv \in \Lv} e^{\mv} q^{\la \rho_o, \mv \ra} \, | M_o(0; \mv) |  \ee where $| S |$ denotes the cardinality of a (finite) set $S.$ This sum can be explicitly computed (see \cite{bfk}) \be{gk-value} \mc{G}_o = \prod_{a \in R_{o, +}} \frac{1 - q^{-1}e^{-a^{\vee}}}{1 - e^{-a^{\vee}}}. \ee If $f \in \C[\Lv_o]$ is written as $f = \sum_{\mv \in \Lv_o} c_{\mv} e^{\mv}$ we shall denote its $e^{\mv}$ coefficient as \be{coeff:f} [e^{\mv}] f = c_{\mv}. \ee  We then have the following result, whose proof (in the affine context) is given in Proposition \ref{gk-whit}..

\begin{nlem} \label{approx-intro-lem} For any fixed $\mv \in \Lv_o$ there exists $\lv \in \Lv_{o,+}$ sufficiently dominant so that \be{approx-intro} [e^{\mv}] \mc{G}_o = q^{ \la \rho_o, \lv \ra} [e^{\mv - \lv}] \W_o(\pi^{\lv}) \ee \end{nlem}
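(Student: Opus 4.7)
The plan is to exploit the unramified hypothesis on $\psi$: for $\lv$ sufficiently dominant, every twist $\psi(\pi^{-\lv}\niw(x)\pi^\lv)$ appearing in the sum expansion of $\W_o(\pi^\lv)$ collapses to $1$, whereupon the Whittaker function reduces -- up to a shift by $\pi^\lv$ and an explicit power of $q$ -- to the Gindikin--Karpelevic sum $\mc{G}_o$.

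Concretely, I would start from \eqref{whit:fin-sum}, which expands
\[
\W_o(\pi^\lv) \;=\; \Phi_{-\rho_o}(\pi^\lv)\sum_{\mu \in \Lv_o} e^{\mu}\, q^{\la \rho_o,\mu\ra}\sum_{x \in M_o(0;\mu)} \psi\bigl(\pi^{-\lv}\niw(x)\pi^\lv\bigr),
\]
and use $\Phi_{-\rho_o}(\pi^\lv) = e^{\lv}\, q^{-\la \rho_o,\lv\ra}$. Extracting the coefficient prescribed by the right-hand side of the lemma picks out a single value of $\mu$ (namely $\mv$), and the corresponding inner sum is over the \emph{finite} set $M_o(0;\mv)$ (the finite-dimensional precursor of the boundedness used in \cite[\S 3]{bgkp}). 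The entire problem therefore reduces to showing that $\psi(\pi^{-\lv}\niw(x)\pi^\lv) = 1$ uniformly in $x \in M_o(0;\mv)$ for $\lv$ dominant enough.

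For this triviality, I choose a representative $\nm_x \in \um_o$ of each coset $\niw(x) \in \um_{o,\O}\setminus\um_o$ and project it through the abelianization isomorphism \eqref{ab:um} onto $\prod_{a \in \Pi_o} u_{-a}(t_a^{(x)})$. Conjugation by $\pi^\lv$ acts on each simple root subgroup by $u_{-a}(t)\mapsto u_{-a}(\pi^{\la a,\lv\ra}t)$, so the projection of $\pi^{-\lv}\nm_x\pi^\lv$ lies in $\prod_{a \in \Pi_o} U_{-a}(\O)$ as soon as $\la a,\lv\ra > -\valu(t_a^{(x)})$ for every simple root $a$. Since $\psi$ is unramified in the sense of \ref{chars}, each $\psi_a$ is trivial on $\O$, and so $\psi$ vanishes on this product. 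Finiteness of $M_o(0;\mv)$ collapses the ``one condition per $x$'' requirement into a single lower bound on $\lv$ in each simple coroot direction.

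Substituting this triviality back, the inner character sum becomes $|M_o(0;\mv)|$, and direct comparison with $[e^\mv]\mc{G}_o = q^{\la \rho_o,\mv\ra}|M_o(0;\mv)|$ from \eqref{gk:fin} yields the desired identity once the $e^\lv$ and $q^{-\la \rho_o,\lv\ra}$ factors coming from $\Phi_{-\rho_o}(\pi^\lv)$ are tracked. The remaining work is essentially bookkeeping -- matching the coefficient-extraction to the normalization of $\W_o$ used in the statement -- rather than anything conceptually difficult; the substantive input is just the unramified-triviality observation. The same argument should carry over to the affine setting essentially verbatim in Proposition \ref{gk-whit}, provided one replaces the implicit finite-dimensional finiteness with \cite[Theorem 1.9(1)]{bgkp}, since the abelianization and conjugation computations at the simple negative root subgroups are identical in the two contexts.
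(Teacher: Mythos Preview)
Your argument is correct, and it differs from the paper's route in an instructive way. The paper defers the proof to Proposition~\ref{gk-whit} (stated in the affine setting), and there the strategy is stronger than yours: rather than only controlling the image of $\pi^{-\lv}\nm\pi^{\lv}$ in the abelianization $\um/[\um,\um]$, the paper shows that the \emph{entire} element $\pi^{-\lv}\nm\pi^{\lv}$ lies in $\um_{\O}$. This is deduced from the approximation statement in \cite[Theorem~1.9]{bgkp}, namely that for $\lv$ sufficiently dominant one has $K\pi^{\lv}\um \cap K\pi^{\lv-\mv}\up = K\pi^{\lv}\um_{\O} \cap K\pi^{\lv-\mv}\up$, which forces the $\um$-part to be integral outright.

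Your approach is more economical: since $\psi$ factors through the abelianization, you only need the simple-root projections to land in $\O$, and for this the mere \emph{finiteness} of $M_o(0;\mv)$ (rather than the approximation theorem) suffices to bound the finitely many valuations $\valu(t_a^{(x)})$ uniformly. In the finite-dimensional case this finiteness is elementary, and in the affine case it is precisely \cite[Theorem~1.9(1)]{bgkp}, so your remark that the argument transfers is justified. The trade-off is that the paper's argument yields more (full integrality of the conjugate, not just of its abelianized shadow), at the cost of invoking a deeper input; your argument uses less but also concludes less, though what it concludes is exactly enough for the lemma. One small wording slip: you write that $\psi$ ``vanishes'' on the product, where you mean it is trivial (equals $1$).
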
 \noindent

\spoint The main fact about $\W_o(g)$ is contained in the following result, generally known as the Casselman-Shalika formula \footnote{Langlands conjectured a version of this formula in a letter to Godement \cite{lang-godement}. For $GL_n$, such a formula was first proven by Shintani \cite{shin}, and then for other classical types by Kato \cite{kato}. At roughly the same time Casselman and Shalika \cite{cs} established the general result using techniques quite distinct from those of Shintani and Kato. We thank Kato for sending us his unpublished manuscript.}

\begin{nthm} [\cite{cs}] \label{cs-fin} Suppose that $\lv \in \Lv_{o, +},$ and denote by $\chi_{\lv} \in \C[\Lambda_o^{\vee}]$ the corresponding Weyl character of the highest weight representation of the dual group $G^{\vee}$. Then \be{cs-fin:s}  \W_o(\pi^{\lv}) = q^{ - \la \rho_o, \lv \ra} \prod_{a \in R_{o, +}} ( 1 - q^{-1} e^{-a^{\vee}}) \;  \chi_{\lv}  \ee 
\end{nthm}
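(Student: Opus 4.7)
The plan is to follow the Bernstein-Zelevinsky-Lusztig philosophy and reduce the computation of $\W_o(\pi^{\lv})$ to an Iwahori-level analysis, in a way that avoids using both the longest element of $W_o$ and uniqueness of Whittaker functionals, so as to be generalizable to the affine setting. The first step is to partition the integration domain in \eqref{whit:fin} using the stratification $\um_o = \bigsqcup_{w \in W_o} U_o^{-, w}$ from \eqref{Uo:w}, writing
\[
\W_o(\pi^{\lv}) \;=\; \sum_{w \in W_o} \W_o^{w}(\pi^{\lv}), \qquad \W_o^{w}(\pi^{\lv}) \;:=\; \int_{U_o^{-, w}} \Phi_{\rho_o}(\pi^{\lv} \nm)\, \psi(\nm)\, d\nm,
\]
so that each Iwahori-Whittaker piece is an integral over a single Bruhat cell.

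Next, I would establish a recursion relating $\W_o^{w_i w}$ to $\W_o^{w}$ for each simple reflection $w_i$. The conceptual content is that the collection $\{\W_o^{w}\}_{w \in W_o}$, suitably normalized and viewed in $\C[\Lv_o]$, is permuted (up to explicit $q$-factors and $\psi$-twists) by a family of Demazure-Lusztig type operators acting on $\C[\Lv_o]$. To prove such a recursion without invoking uniqueness of Whittaker functionals, I would use the Harish-Chandra type integral formula of Proposition \ref{hc-fla}: it rewrites an integral over $U_o^{-, w_i w}$ as an iterated integral involving $U_o^{-, w}$ and a compact factor, and carrying out the integration over the compact factor explicitly yields exactly the scalars appearing in the Whittaker-flavoured Demazure-Lusztig operator (as opposed to the spherical one from \cite{bkp}).

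Iterating this recursion from a chosen base cell expresses $\W_o(\pi^{\lv})$ as a Weyl-type symmetrization of a product of Demazure-Lusztig operators applied to $\Phi_{-\rho_o}(e^{\lv})$. A purely algebraic manipulation, along the lines of \cite{bkp} and \cite{cher:ma}, then identifies the symmetrization as
\[
\W_o(\pi^{\lv}) \;=\; C \cdot q^{-\la \rho_o, \lv \ra} \prod_{a \in R_{o, +}} \bigl(1 - q^{-1} e^{-a^{\vee}}\bigr) \cdot \chi_{\lv}
\]
for some constant $C$ independent of $\lv$. To show $C = 1$, I would apply Lemma \ref{approx-intro-lem}: for fixed $\mv$ and sufficiently dominant $\lv$, $[e^{\mv}] \mc{G}_o = q^{\la \rho_o, \lv \ra} [e^{\mv - \lv}] \W_o(\pi^{\lv})$, and substituting the explicit Gindikin-Karpelevic formula \eqref{gk-value} together with the proposed formula for $\W_o(\pi^{\lv})$ on the right-hand side forces $C = 1$.

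The main obstacle is the Iwahori-Whittaker recursion itself. The Harish-Chandra type decomposition of Proposition \ref{hc-fla} comes with a delicate change of variables that must interact compatibly with the generic character $\psi$; one has to track precisely how $\psi$ transforms when crossing a simple root direction so that the resulting scalars assemble into the correct Demazure-Lusztig operator rather than the spherical one of \cite{bkp}. Getting this bookkeeping right -- including the normalization differences explained in \S \ref{sums:int} -- is where the real work lies.
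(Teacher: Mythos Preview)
Your proposal is correct and follows essentially the same route as the paper's own argument: decompose $\W_o(\pi^{\lv})$ into Iwahori--Whittaker pieces via the stratification of $\um_o$, establish the simple-reflection recursion for these pieces using the Harish-Chandra type integral formula (Proposition~\ref{hc-fla}), reinterpret the recursion in terms of the Demazure--Lusztig operators $T_a$, prove the operator Casselman--Shalika identity up to a scalar by an eigenfunction argument as in \cite{bkp, cher:ma}, and pin down the scalar via the Gindikin--Karpelevic limit of Lemma~\ref{approx-intro-lem}. The only cosmetic discrepancy is that the paper states the recursion for $w = w' w_a$ (right multiplication) in the finite-dimensional appendix and $w = w_a w'$ (left multiplication) in the affine body, whereas you use left multiplication throughout; this is immaterial.
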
 

Let us now sketch a proof of this theorem which generalizes to the setting of loop groups. 

\tpoint{Step 1: Decomposition and Recursion} We first note that the decomposition $\um_o = \sqcup_{w \in W} \umw_o$ (see the line before (\ref{Uo:w})) results in the following expression: \be{W:wsum} \W_o(\pi^{\lv}) = \sum_{w \in W} \W_{o,w}(\pi^{\lv}) \ee where $\W_{o, w}(\pi^{\lv})$ are the \emph{Iwahori-Whittaker functions}, defined as \be{W:wpart} \W_{o,w}(\pi^{\lv}) :=\int_{U^{-,w}_o} \Phi_{\rho_o}(\pi^{\lv} \nm) \psi(\nm) d\nm = \Phi_{- \rho_o}(e^{\lv}) \int_{U^{-,w}_o} \Phi_{\rho_o}(\nm) \psi(\pi^{-\lv} \nm \pi^{\lv} ) d\nm. \ee

\begin{nprop} \label{whit:recur-body-1} Let $w = w' w_a$ with $a \in \Pi_o$ a simple root, and $\ell(w) = 1 + \ell(w').$ Then we have the following identity in $\C[\Lv_o],$ \be{W:rec-1} \W_{o, w} (\pi^{\lv}) = \frac{1 - q^{-1} e^{-\av}}{1 - e^{\av}} \W_{o, w'}(\pi^{\lv})^{w_a} + \frac{q^{-1} -1 }{1 - e^{\av}} \W_{o, w'}(\pi^{\lv}), \ee where $\W_{o, w'}(\pi^{\lv})^{w_a}$ is the termwise application of $w_a$ to the expression $\W_{o, w'}(\pi^{\lv}) \in \C[\Lv]$ and the rational functions in the above expression are expanded in positive powers of the coroots (a cancellation occurs to ensure the whole expression is in $\C[\Lv]$). \end{nprop}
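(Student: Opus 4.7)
The plan is to prove the recursion by a rank-one reduction along the simple root $a$, using the $SL_2$-subgroup of $G_o$ generated by $\mathbf{U}_{\pm a}$. The starting point is the semidirect product decomposition $\um_o = \um_{o,a'} \rtimes U_{-a}$, where $\um_{o,a'}$ is the normal subgroup of $\um_o$ generated by $U_{-b}$ for $b \in R_{o,+} \setminus \{a\}$ (normality follows from the Chevalley commutator formula, since $[U_{-a}, U_{-b}] \subset U_{-(a+b)} \subset \um_{o,a'}$ when $a+b$ is a root, noting $a+b \neq a$). Both the Haar measure and the character $\psi$ of (\ref{phi:aff}) split along this decomposition, and the stratum $U^{-,w}_o$ of (\ref{Uo:w}) breaks up according to the rank-one Iwasawa decomposition of the $U_{-a}$-coordinate.

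First I would analyze the Iwasawa decomposition inside the $SL_2$-subgroup. Writing a generic element of $\um_o$ as $u = v \cdot u_{-a}(x)$ with $v \in \um_{o,a'}$, there are two cases: if $x \in \O$ then $u_{-a}(x) \in K_o$ reduces mod $\pi$ to a unipotent element and preserves the Bruhat class of $v$; if $\valu(x) = -n < 0$ then the rank-one Iwasawa decomposition yields $u_{-a}(x) = k_n(x) \cdot \pi^{n\av} \cdot u_a(x^{-1})$ with $k_n(x) \in K_o$ whose reduction lifts $w_a$. Under the hypothesis $w = w' w_a$ with $\ell(w) = 1 + \ell(w')$, the first case contributes to $U^{-,w}_o$ precisely when $\gamma_o(v) = w$ in the rank-$(\ell-1)$ stratification of $\um_{o,a'}$, while the second case contributes precisely when $k_n(x)^{-1} v k_n(x)$ lies in the $U^{-,w'}_o$-stratum; this last identification uses that $w_a$ stabilizes $R_{o,+} \setminus \{a\}$ and hence normalizes $\um_{o,a'}$.

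Next I would compute the inner integral over $U_{-a}$ explicitly. On the level set $\{x : \valu(x) = -n\}$ for $n \geq 1$, the Iwasawa translation by $\pi^{n\av}$ introduces a factor $q^n e^{n\av}$ in $\Phi_{\rho_o}(\pi^{\lv} u_{-a}(x))$ (using $\la \rho_o, \av \ra = 1$), and the character $\psi_a$ restricts to a specific additive character on each residue class, whose integral can be evaluated in closed form. Summing over $n \geq 0$ yields geometric series in $q^{-1} e^{\av}$ and $e^{\av}$ that combine to give both the denominator $1-e^{\av}$ and the numerical constants $1 - q^{-1}e^{-\av}$ and $q^{-1}-1$ appearing in the target formula.

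The final assembly uses the Harish-Chandra type identity of Proposition~\ref{hc-fla} to convert the geometric conjugation $v \mapsto k_n(x)^{-1} v k_n(x)$ into the symbolic $w_a$-action on $\C[\Lv_o]$; this is what turns the outer integral in the $\valu(x)<0$ case into $\W_{o,w'}(\pi^{\lv})^{w_a}$ rather than an untwisted copy of $\W_{o,w'}(\pi^{\lv})$, while the $x \in \O$ case reassembles directly to $\W_{o,w'}(\pi^{\lv})$. The main obstacle is precisely this transfer step: verifying that the stratification on $\um_{o,a'}$ induced by restricting $\gamma_o$ from $\um_o$ matches the intrinsic rank-$(\ell-1)$ stratification, and that Proposition~\ref{hc-fla} correctly transports the geometric $v$-conjugation into the formal Weyl action on the $e^{\mv}$-variables. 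The length hypothesis $\ell(w) = 1 + \ell(w')$ enters critically here to guarantee compatibility between the semidirect product decomposition of $\um_o$ and the length filtration on $W_o$.
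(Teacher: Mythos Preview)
Your approach differs from the paper's, and it has a genuine gap in the reassembly step.

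The paper (Appendix~\ref{app-hc}) does \emph{not} decompose $\um_o$ along $U_{-a}$. Instead it first invokes Proposition~\ref{hc-fla} to replace $\W_{o,w}(\pi^{\lv})$ by (a constant times) the compact integral $J_w(\pi^{\lv})$ over $U_{w,\pi}\subset U_o$, the product of \emph{positive} root groups indexed by the inversion set $S_w=R_{o,+}\cap w^{-1}R_{o,-}$. The recursive structure then comes for free from $S_w=S_{w'}\sqcup\{w'(a)\}$, which gives the factorization $U_{w,\pi}\,w = U_{w',\pi}\,w'\,U_{a,\pi}\,w_a$. One splits $U_{a,\pi}=U_a\setminus U_a(\leq 0)$ and computes two pieces; the $w_a$-twist on $\W_{o,w'}$ arises from the cocycle identity $\aiw(u_{w'}w'\,u_a w_a)=\aiw(u_{w'}w')^{w_a}\,\aiw(\wt n_a w_a)$, with no further use of Proposition~\ref{hc-fla}.

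The problem with your plan is the claim that the $x\in\O$ case ``reassembles directly to $\W_{o,w'}(\pi^{\lv})$.'' With the factorization $\nm=v\cdot u_{-a}(x)$ and $x\in\O$, one has (after fixing the order so that $U_{-a}$ acts on the left, which is what makes $\gamma_o$ invariant) $\gamma_o(\nm)=\gamma_o(v)$, so the condition for $\nm\in U^{-,w}_o$ becomes $\gamma_o(v)=w$. But the resulting integral is over $\{v\in\um_{o,a'}:\gamma_o(v)=w\}$, which is \emph{not} $U^{-,w'}_o$ nor any piece of it: $\W_{o,w'}$ is an integral over $U^{-,w'}_o\subset\um_o$, indexed by $w'$, not by $w=w'w_a$, and not restricted to $\um_{o,a'}$. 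The phrase ``rank-$(\ell-1)$ stratification of $\um_{o,a'}$'' has no evident meaning, since $\um_{o,a'}$ is not the unipotent radical of a Levi subgroup and carries no intrinsic lower-rank version of $\gamma_o$. Likewise, your final appeal to Proposition~\ref{hc-fla} misreads that result: it converts between integrals over $\um_o$ and over $U_{w,\pi}$, and says nothing about transporting a geometric conjugation by $k_n(x)$ into the formal $w_a$-action on $\C[\Lv_o]$. The paper's route avoids all of this precisely because on the $U_{w,\pi}$-side the recursion in $w$ is already built into the index set $S_w$, and the $w_a$-action appears algebraically through the $\aiw$-cocycle rather than through any conjugation on $\um_o$.
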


This proof is carried out in Appendix \ref{app-hc}. Alternatively, in the affine setting we give a different proof (which also works in the finite-dimensional setting) using intertwining and averaging operators.

\tpoint{Step 2: Rephrasal using Demazure-Lusztig Operators}  The above Proposition \ref{whit:recur-body-1}
can be reformulated with the aid of certain elements in $\C(\Lv_o)[W_o]$, the group algebra of $W_o$ over the ring of rational functions on $\C(\Lv_o).$ Namely, for each $a \in \Pi_o$ we define \be{T:a} T_a := \c(a) [w_a] + \b(a) [1] \,  \ee where \be{b:c} \begin{array}{lcr} \c(a) = \frac{1 - q^{-1} e^{-\av}}{1 - e^{\av}} & \text{ and } \b(a) := \frac{q^{-1} -1 }{1 - e^{\av}}  \end{array}. \ee The elements $T_a, a \in \Pi_o$ satisfy the relations for a Hecke algebra $\mathbb{H}_o$ associated to the finite group $W_o$ (see Proposition \ref{quad:braid}). In particular, since the braid relations are satisfied, if $w \in W_o$ has a reduced decomposition $w = w_{a_1} \cdots w_{a_r}, a_i \in \Pi_o$ we may define $T_w:= T_{a_1} \cdots T_{a_r}$ and note that the above definition does not depend on the reduced decomposition chosen. One can check that the natural action of $W_o$ on $\Lv_o$ extends to give an action of $\mathbb{H}_o$ on $\C[\Lv_o]$ via the operators $T_a$ defined above. Its relevance for us here is that the Iwahori-Whittaker functions may be expressed as \be{whit:recur:polyrep} \W_{o,w} (\pi^{\lv} ) = q^{ -\la \rho_o, \lv \ra} T_{w^{-1}} (e^{\lv}). \ee In case $w=1$ we can easily verify that $\W_{o, 1}(e^{\lv})= q^{ - \la \rho_o, \lv \ra} e^{\lv}.$ Thus, inductively all $\W_{o, w}(\pi^{\lv})$ may be computed recursively.

\tpoint{Step 4: Reassembly and the Gindikin-Karpelevic Limit} From (\ref{W:wsum}) and (\ref{whit:recur:polyrep}), we see the computation of the Whittaker function reduces to a combinatorial problem, namely the identification of the right hand side of the following identity, \be{whit:polyrep}   \W_o(\pi^{\lv}) =  q^{ -\la \rho_o, \lv \ra} \sum_{w \in W_o} T_{w^{-1}} (e^{\lv})= q^{ -\la \rho_o, \lv \ra} \sum_{w \in W_o} T_{w} (e^{\lv}). \ee In other words,  Theorem \ref{cs-fin} amounts to the identity \be{cs:iden}    \sum_{w \in W_o} T_w (e^{\lv}) = \prod_{a \in R_{o, +}} ( 1 - q^{-1} e^{-a^{\vee}}) \;  \chi_{\lv}. \ee This follows in turn from the following "operator Casselman-Shalika" formula, i.e., the following algebraic identity in $\C(\Lv_o)[W_o]$ \be{alg:cs} \sum_{w \in W} T_w = \prod_{a \in R_{o, +}} ( 1 - q^{-1} e^{-a^{\vee}}) \sum_{w \in W} [w] \prod_{a \in R_{o, +}}  \frac{1}{ ( 1 -  e^{-a^{\vee}})}. \ee The proof of this identity rests on the Hecke properties of $T_a,$ and is explained in more detail in Proposition \ref{alg-iden}. The main idea is to show that both sides are eigenfunctions of each $T_a,$ $a \in \Pi_o$ with the same eigenvalue. This ensures that the two sides of the above equation are proportional, i.e., we can write \be{whit-ctp} \W_o(\pi^{\lv}) = q^{- \la \rho_o, \lv \ra } \sum_{w \in W} T_w(e^{\lv}) =  q^{- \la \rho_o, \lv \ra}  \ \mf{c}  \prod_{a \in R_{o, +}} ( 1 - q^{-1} e^{-a^{\vee}}) \sum_{w \in W} [w] \prod_{a \in R_{o, +}}  \frac{1}{ ( 1 -  e^{-a^{\vee}})} \ee where $\mf{c}$ is some element in $\C(\Lv_o)$ which is independent of $\lv.$ Now Lemma \ref{approx-intro-lem} tells us that the Whittaker function resembles the Gindikin-Karpelevic integral for large values of $\lv.$ Since the latter integral can be computed explicitly (see (\ref{gk-value})), we can show that $\mf{c}=1$ (for details on this point see \S \ref{det-ct}).  

\section{Notations on Affine Lie Algebras and Groups}  \label{sec-affnot}

We will review in this section some basic facts about affine Kac-Moody algebras and the corresponding $p$-adic groups associated to them. The notation is the same as in \cite{bgkp, bkp} and we refer the reader to these papers (and the references therein) for further details on the constructions we sketch here.

\subsection*{Affine Lie algebras and root systems}
\noindent

\tpoint{Affine Lie Algebras} We maintain the same finite-dimensional conventions as in \S \ref{sec:fin-dim}. For a field $k,$ we  denote  by $\f{g}$ the (untwisted) affinization of the Lie algebra $\f{g}_o.$ As a vector space $\f{g}:= k \dd \oplus \mf{g}'$ where $\dd$ is the degree derivation and $\f{g}'$ is the one-dimensional central extension of the the loop algebra $ \f{g}_o \otimes_k k[t, t^{-1}]$ which is specified by normalizing the Killing form  $(\cdot, \cdot)$ on $\mf{g}_o$ so that the highest root has length $2.$ Let $\mf{h} \subset \mf{g}$ denote a Cartan subalgebra containing the finite-dimensional Cartan $\mf{h}_o,$ the degree derivation $\dd,$ and the center $\mf{h}_{cen}$ of $\mf{g},$  which is one-dimensional and spanned by $\cc$, the minimal imaginary coroot, i.e., one has a direct sum decomposition $\mf{h}:= \mf{h}_o \oplus k \cc  \oplus k \dd .$   Let $\f{h}^*$  be the algebraic dual of $\f{h}$. As before we denote by  $ \la \cdot, \cdot \ra \,: \f{h}^* \times \f{h} \rightarrow k$ the natural pairing. Let $R$ be the set of roots of $\f{g},$ and $R^{\vee}$ the set of coroots. We denote the set of simple roots of $\mf{g}$  by $\Pi= \{ a_1, \ldots, a_{\ell+1} \} \subset \f{h}^*$ and write $\Pi^{\vee}  =  \{ a_1^{\vee}, \ldots, a_{\ell+1}^{\vee} \}   \subset  \f{h}$ for the set of simple affine coroots.

 For each $i=1,\ldots, \ell+1$ we denote by $w_{a_i}$ (or just sometimes $w_i$) the corresponding simple reflection and denote by  $W \subset \Aut(\f{h})$  the group generated by the elements $w_i$ for $i=1, \ldots, \ell.$ It is a Coxeter group.
 
 A root $a \in R$ is called a \emph{real root} if there exists $w \in W$ such that $w a \in \Pi.$ The set of such roots is denoted as $R_{re}.$ Otherwise, $a$ is called an \emph{imaginary root}, and the set of all such imaginary roots is denoted $R_{im}.$ For each $a \in R$ we let $m(a)$ denote the multiplicity of $a:$ it is equal to $1$ is $a \in R_{re}$ and $\ell$ if $a \in R_{im}.$

We define the affine root lattice $Q$ as the $\zee$-module spanned by $\Pi$ and the affine coroot lattice $Q^{\vee}$ as the $\zee$-module spanned by $\Pi^{\vee}.$ Denote the subset of non-negative integral linear combinations of the affine simple roots  (respectively, affine simple coroots) as $Q_+$ (respectively,  $Q^{\vee}_+$).  The integral weight lattice is defined  by
\begin{equation}\label{integralweightlatticedef}
\Lambda  \ \  :=  \ \  \{ \lambda \in \f{h}^* \ | \   \langle \lambda, a_i^{\vee} \rangle \,  \in \,  \zee  \, \ \text{ for } \, i\,=\, 1, \ldots, \ell+1  \text{ and } \la \lambda, \dd \ra \in \zee \}\,.
\end{equation}  and its dual, the coweight lattice $\Lambda^{\vee}$ is defined as \be{aff-cowts} \Lv:= \{ \lv \in \f{h}^* \mid \la a_i, \lv \ra \in \zee \text{ for } i=1, \ldots, \ell+1 \text{ and } \la \Lambda_{\ell+1}, \lv \ra \in \zee  \} .\ee The set of dominant weights $\Lambda_+$ is defined as \be{lv_+} \Lambda_+  :=  \ \  \{ \lambda \in \f{h}^* \ | \   \langle \lambda, a_i^{\vee} \rangle \, \geq  \,  0  \, \ \text{ for } \, i\,=\, 1, \ldots, \ell+1  \text{ and } \la \lambda, \dd \ra \in \zee \}\,, \ee and the set of dominant coweights $\Lv_+$ is defined analogously.  Finally we define $\rho \in \Lambda$ by the following conditions \be{rho:v} \la \rho, a_i^{\vee} \ra = 1 \text{ for } i =1, \ldots, \ell+1 \text{ and } \la \rho, \dd \ra = 0. \ee We define $\rho^{\vee} \in \Lv$ by replacing in the above definition the simple coroots with the simple roots and $\dd$ with $\Lambda_{\ell+1}.$

Let $\mf{g}^{\vee}$ denote the dual Lie algebra to $\mf{g}$ (obtained by the transpose of the generalized Cartan matrix of $\mf{g}$). In general $\mf{g}^{\vee}$ is again an affine Lie algebra, but could be of twisted type. If $\mf{g}_o$ is of simply-laced type, then $\mf{g}^{\vee}$ is the untwisted affine Lie algebra attached to $\mf{g}_o^{\vee},$ the dual of the underlying finite-dimensional root system. To avoid the complications of twisted affine types, \emph{we shall throughout restrict to the case that $\mf{g}_o$ is of simply-laced type.}

\tpoint{Modules for Affine Lie Algebras} \label{loo}  Given $\lambda, \mu \in \f{h}^*$ we define the dominance partial order on $P(V)$ by \be{dom:wts} \lambda \geq \mu \iff \lambda - \mu \in Q_+. \ee For any $\lambda \in \Lambda_+$ we let $V^{\lambda}$ denote the irreducible highest weight module of $\mf{g}$ with highest weight $\lambda;$ it is equipped with a decomposition \be{V:dec} V^{\lambda} = \oplus_{\mu \in P(V^{\lambda}) } V^{\lambda}(\mu) \ee where each $V^{\lambda}(\mu)$ is finite-dimensional and in fact $\mu \leq \lambda.$ We next formulate the Weyl-Kac character formula for $V^{\lambda}$, but first we need some preliminaries on defining the ring in which the character takes its value.

\tpoint{Dual Coweight Algebras} Let $S$ be any ring, and consider the set of formal linear combinations \be{f:lc} f:= \sum_{\lv \in \Lv} c_{\lv} e^{\lv}, \text{ where } c_{\lv} \in S. \ee The support of $f$ as above will be the set $\Supp(f) \subset \Lv$ consisting of all $\mv \in \Lv$ such that  $c_{\mv} \neq 0.$ We shall say that $f$ has semi-infinite support if there exists $\lv_1, \ldots, \lv_r \in \Lv_+$ such that \be{cat:O} \Supp(f) \subset \mf{o}(\lv_1) \cup \cdots \cup \mf{o}(\lv_r) \ee where \be{o} \mf{o}(\lv) = \{ \mv \in \Lv | \mv \leq \lv \},\ee and $\leq$ denotes the dominance order on coweights (defined as in (\ref{dom:wts}) ). Let us define \be{loo:R} S_{\leq}[\Lv]:= \{ f=  \sum_{\lv \in \Lv} c_{\lv} e^{\lv} \mid \Supp(f) \text{ is semi-infinite} \}. \ee By \cite{loo}, the set $S_{\leq}[\Lv]$ is a ring, which carries a natural action of $W$ (via $w e^{\lv} = e^{w \lv}$ for $w \in W$ and $\lv \in \Lv$). For $f \in S_{\leq}[\Lv]$ we denote by $f^w$ the application of $w \in W$ to $f$.

\tpoint{Weyl-Kac Characters} Let us now define  \be{del} \mf{D} := \prod_{ a \in R_+} (1 - e^{-\av})^{m(\av)}  \ee where $m(\av)$ is the multiplicity of the root $\av.$ One can easily show that $\mf{D} \in \C_{\leq}[\Lv]$ and that moreover (see \cite[p. 172, 10.2.2]{kac}) one has the following identity in $\C_{\leq}[\Lv],$\be{del:w} e^{w \rho} \mf{D}^w = (-1)^{\ell(w)} e^{\rho} \mf{D}. \ee The Weyl-Kac character formula now states, 

\begin{nthm}\cite[Theorem 10.4, p. 173]{kac} \label{weyl-kac} Let $\lv \in \Lv_+$ and let $\chi_{\lv}:= \sum_{\mv \in \Lv} \dim_{\C} V^{\lv}(\mv) e^{\mv} $ be the character of the irreducible highest weight representation (of $\mf{g}^{\vee}$) with highest weight $\lv.$ Then $\chi_{\lv} \in \C_{\leq}[\Lv]$ and we have the following equality in $\C_{\leq}[\Lv],$ \be{wk-fla} \chi_{\lv} = \frac{1}{\mf{D}} \sum_{w \in W} (-1)^{\ell(w)} e^{w (\lv + \rho) - \rho} = \sum_{w \in W} \frac{ e^{w \lv} }{\mf{D}^w } . \ee \end{nthm}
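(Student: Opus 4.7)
The plan is to adapt the classical proof of Kac's character formula. The argument has three main components: establishing $W$-invariance of $\chi_\lv$, reformulating (\ref{wk-fla}) as a $W$-skew-invariant identity via multiplication by $e^\rho \mf{D}$, and invoking a Casimir operator computation to pin down which weights can appear.

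First I would verify that $\chi_\lv \in \C_{\leq}[\Lv]$: since $V^\lv$ is a highest weight module for $\mf{g}^\vee$, every weight $\mv$ of $V^\lv$ satisfies $\mv \leq \lv$, so $\Supp(\chi_\lv) \subset \mf{o}(\lv)$ is semi-infinite. Next I would establish that $\chi_\lv$ is $W$-invariant: integrability of $V^\lv$ ensures that the Chevalley generators $e_i, f_i$ of $\mf{g}^\vee$ act locally nilpotently, so the formal expression $\exp(e_i)\exp(-f_i)\exp(e_i)$ defines an automorphism of $V^\lv$ that permutes the weight spaces bijectively as $V^\lv(\mv) \leftrightarrow V^\lv(w_i \mv)$. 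Hence $\dim_\C V^\lv(w\mv) = \dim_\C V^\lv(\mv)$ for every $w \in W$.

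Multiplying (\ref{wk-fla}) through by $e^\rho \mf{D}$, the identity to prove becomes
\begin{equation*}
  e^\rho\, \mf{D}\, \chi_\lv \;=\; \sum_{w \in W} (-1)^{\ell(w)} e^{w(\lv+\rho)}.
\end{equation*}
The right-hand side is manifestly $W$-skew-invariant; the left-hand side is also $W$-skew-invariant, combining the $W$-invariance of $\chi_\lv$ established above with $(e^\rho \mf{D})^w = e^{w\rho} \mf{D}^w = (-1)^{\ell(w)} e^\rho \mf{D}$ from (\ref{del:w}). The heart of the argument is the Casimir computation: the Casimir operator $\Omega$ of $\mf{g}^\vee$ acts on $V^\lv$ as the scalar $(\lv+\rho, \lv+\rho) - (\rho, \rho)$, while on a Verma module $M^\mv$ it acts by $(\mv+\rho, \mv+\rho) - (\rho, \rho)$. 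Expanding $\chi_\lv$ as a virtual sum of Verma characters (via a Jordan--Hölder analysis in category $\mc{O}$, or equivalently a BGG-style resolution), only weights $\nu$ satisfying $(\nu, \nu) = (\lv+\rho, \lv+\rho)$ together with $\nu \leq \lv+\rho$ can contribute to $e^\rho \mf{D}\,\chi_\lv$. A standard lemma of Kac (Proposition 11.4 in his book) shows these two conditions force $\nu \in W(\lv+\rho)$.

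Combining $W$-skew-invariance with the normalization $c_{\lv+\rho} = 1$ (coming from the one-dimensional highest weight space of $V^\lv$ and the fact that both $e^\rho \mf{D}$ and $\chi_\lv$ have leading coefficient $1$ at their respective top weights) yields the first equality in (\ref{wk-fla}); the second equality follows from a direct rewriting via (\ref{del:w}), since $e^{w\lv}/\mf{D}^w = (-1)^{\ell(w)} e^{w(\lv+\rho)-\rho}/\mf{D}$. The principal obstacle, and what distinguishes the affine case from the finite-dimensional one, is controlling the manipulations inside $\C_{\leq}[\Lv]$ rather than in a larger formal completion: Looijenga's theorem that $\C_{\leq}[\Lv]$ is a ring handles the product $e^\rho \mf{D}\,\chi_\lv$, and the alternating sum $\sum_w (-1)^{\ell(w)} e^{w(\lv+\rho)}$ lies in $\C_{\leq}[\Lv]$ because every $w(\lv+\rho) \leq \lv+\rho$ (the dominant element is the maximum of its $W$-orbit in the dominance order), so the whole sum has semi-infinite support contained in $\mf{o}(\lv+\rho)$.
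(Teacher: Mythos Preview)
The paper does not supply its own proof of this theorem: it is quoted as a black box from Kac's book, with only the parenthetical remark that the equality of the second and third expressions in (\ref{wk-fla}) follows immediately from (\ref{del:w}). Your proposal reproduces the standard Kac argument (integrability $\Rightarrow$ $W$-invariance of $\chi_{\lv}$, Casimir constraint plus the lemma that $(\nu,\nu)=(\lv+\rho,\lv+\rho)$ together with $\nu\leq\lv+\rho$ forces $\nu\in W(\lv+\rho)$, then skew-invariance and normalization), and your derivation of the second form from the first via (\ref{del:w}) matches the paper's one-line remark. So there is no discrepancy to report; your sketch is a faithful outline of the proof the paper is citing.
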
 Note that implicit in the above theorem is the assertion that all expressions in (\ref{wk-fla}) lie in $\C_{\leq}[\Lv].$ The equality of the second and third expression follows at once from (\ref{del:w}).

\subsection*{Affine Kac-Moody Groups} \label{section-loopgroups} \noindent

\tpoint{The Tits Group Functor}  Let $\mathbf{G}$ denote the affine Kac-Moody group functor of Tits constructed in \cite{tits} (or see \cite{bkp} where this construction is reviewed in the above notation). Recall that the construction involves, for any  $a \in R_{re}$ a corresponding one-dimensional additive group scheme $\mathbf{U}_a$ together with a fixed isomorphism $x_a: \mathbb{G}_a \rr \mathbf{U}_a.$ We let $\mathbf{U}$ and $\mathbf{U}^-$ denote the subgroup scheme generated by $\mathbf{U}_a$ for $a \in R_{re, +}$ or $R_{re ,-}$ respectively. For each  $i=1, \ldots, \ell+1$ we choose isomorphisms $x_{a_i}: \mathbb{G}_a \rr \mathbf{U}_{a_i}$ and $x_{-a_i}: \mathbb{G}_a \rr \mathbf{U}_{-a_i}$ and for each invertible element $r \in S^*$ (here again $S$ is some test ring) and $i=1, \ldots, \ell+1$ denote by $\wt{w_i}(r)$ the image of the product \be{w_i(r)}  x_{a_i}(r) x_{-a_i}(-r^{-1}) x_{a_i}(r)  \ee in $\mathbf{G}(S).$ We set $\wt{w_i}:= \wt{w_i}(1).$ Also, recall that there exists an embedding $\mathbf{A} \subset \mathbf{G}$ where $\mathbf{A}$ is the functor that sends $S$ to $\mathbf{A}(S)= \Hom_{\zee}(\Lambda, S).$ For $u \in S^*$ and $\lv \in \Lambda^{\vee} $ we write $s^{\lv}$ for the element of $\mathbf{A}(S)$ which sends each $\mu \in \Lambda$  to $s^{\langle \mu, \lv \rangle} \in S.$

Now we describe the structure of $G:= \mathbf{G}(k)$ for any field $k$. For each $a \in R_{re}$ we define $U_a = \mathbf{U}_a(k),$ and we also let $A= \mathbf{A}(k).$  \footnote{Recall our convention: if $\mathbf{X}$ is some functor, we shall denote by the roman letter $X$ the set of points $\mathbf{X}(k)$ over some field $k.$ }
Let $U$ denote the subgroup generated by $U_a$ for $a \in R_{re, +}$ and $U^-$ the subgroup generated by $U_a$ for $a \in R_{re, -}.$ Define now $B_a$ to be the subgroup of $G$ generated by $U_a$ and $T.$ Also, set $B$ and $B^-$ to be the subgroups generated by all the $B_a$ for $a \in R_{re, +}$ and $R_{re, -}$ respectively.  We have semi-direct products $B = A \rtimes U$ and $B^- = A \rtimes U^-.$ We let $N$ be the group generated by $T$ and the $\wt{w_i}$ defined as above. There is a natural map $\zeta: N \rr W$ which sends $\wt{w_i} \mapsto w_i$ and  which has kernel $A.$ This map is surjective, and induces an isomorphism $\zeta: N/A \rr W.$  For each $w \in W,$ we shall write $\dot{w}$ for any lift of $w$ by $\zeta.$ If $w \in W$ has a reduced decomposition $w = w_{a_{i_1}} \cdots w_{a_{i_r}},$ with the $a_k \in \Pi,$ we shall also sometimes write \be{tw} \wt{w}:= \wt{w}_{i_1} \cdots \wt{w}_{i_r} \ee for a specific lift of $w$ (depending on the reduced decomposition) where the $\wt{w}_i$ were defined after (\ref{w_i(r)}). One has the following Bruhat-type decompositions \be{bruhat} G &=& \sqcup_{w \in W}  B \, \dot{w} \, B \ \ = \ \  \sqcup_{w \in W} \, B^- \dot{w} \,  B^- \\
&=& \sqcup_{w \in W} B^-\,  \dot{w} \, B \ \ = \ \ \sqcup_{w \in W} B \,  \dot{w} \, B^-,\ee where $\dot{w}$ is any lift of $w \in W$ to $N$ under the map $\zeta$ above. Note that it is important here that we are working with the so-called minimal Kac-Moody group in order to have Bruhat decompositions with respect to $B$ and $B^-.$ We shall denote the \emph{big cell} of $G$ as $BU^-.$ Note that if $g \in BU^-$ then we may uniquely write $g = u h u^-$ with $u \in U, h \in A, u^- \in U^-$. 

\subsection*{Recollections on $p$-adic Loop Groups} \noindent

\spoint \label{padic-basic} Now we collect a few useful definitions and constructions related to $G:= G(\mc{K}),$ a group which we informally refer to as a \emph{$p$-adic loop group} in place of the more precise terminology \emph{affine Kac-Moody group over a non-archimedean local field}. Throughout we set \be{K} K:= \mathbf{G}(\O) \subset G \ee which plays the role of a maximal compact subgroup in the discussion below (note that we have not defined any topology on $G,$ so this is only an analogy). We set $A := \mathbf{A}(\mc{K}) \cong \Hom_{\zee}(\Lambda, \mc{K}) \subset G.$ Let $A_{\O}:= \mathbf{A}(\O)$ and note that we have an direct product decomposition $A = A' A_{\O}$ where $A'$ is identified with $\Lv$ via the map $\lv \mapsto \pi^{\lv}$ with $\pi \in \O$ the fixed uniformizing element. For each $a \in R_{re},$ recall that the elements of the one-dimensional group $U_a:= \mathbf{U}_a(\K)  \subset G$ are written as $x_a(u)$ for $u \in \K.$ For $m \in \zee$ we set \be{U:m} U_{(a, m)}:= \{ x_a(u) | v (u) \geq m \} .\ee  As a shorthand, if $a \in R_{re}$ we write \be{Ua:defs} \begin{array}{lcr} U_{a, \O} := U_{(a, 0)} = \mathbf{U}_a(\O) &
U_{a, \pi} := U_{(a, 1)} &
U_{a}[m] := U_{(a, m)} \setminus U_{(a, m-1)} \end{array} \ee Let us also set $U_{\pi}$ to be the group generated by $U_{a, \pi}$ with $a \in R_{re, +}$ and $U^-_{\O}$ the group generated by $U_{-a, \O}$ for $a \in R_{re, +}.$ Similarly, we may define the groups $U_\O$ and $U^-_\pi.$ The group $K = \mathbf{G}(\O)$ is generated by the subgroups $U_{a, \O}.$  Letting $\varpi: K \rr G_\res$  denote the map induced from the natural reduction $\O \rr \res$ (recall that $\res$ denotes the residue field). We shall define the \emph{Iwahori subgroups} $I, I^- \subset K$ as \be{iwa:+} \begin{array}{lcr} I:= \{  x \in K | \varpi(x) \in B_\res \} & \text{ and } & I^-:= \{ x \in K \mid \varpi(x) \in B^-_{\res} \}. \end{array} \ee Choose representatives $\wt{w} \in K$ for $w \in W$ as in (\ref{tw}), we have a decomposition  \be{K:im}  K &=& \sqcup_{w \in W}  I^- \, \wt{w} \, I. \ee  The Iwasawa decomposition states that \be{iwa:K} G = \sqcup_{ \lv \in \Lv} K \pi^{\lv} \up = \sqcup_{\lv \in \Lv} K \pi^{\lv} \um. \ee In general, for $g \in G$ the $K$ and $\up$ (or $\um$) component are not uniquely determined. Let $\aw:= \Lv \rtimes W$ be the "affinized" Weyl group: for each $x = (\lv, w) \in \aw,$ we denote by $\pi^{\lv} \wt{w} \in G$ again by the same letter $x.$ The following is a consequence of the Iwasawa decomposition and (\ref{K:im}): \be{G:im} G = \sqcup_{x \in \aw} A_{\O} U x I^- \ee and we refer to \cite[\S3]{bkp} for a proof, which follows from the Iwasawa decomposition for $G$ and the Iwahori-Matsumoto decomposition for $K.$

\section{Whittaker Sums } \label{sec-whit}

Throughout this section, we maintain the conventions of \S \ref{padic-basic}.

\subsection*{Basic Definitions} \noindent

\spoint \label{chars} For each $a \in \Pi$, let $\psi_a: U_a \rr \C^*$ be a character. There is an isomorphism of abelian groups  \be{ab:um} \um/ [ \um, \um] \cong \prod_{a \in \Pi} U_a(\mc{K}), \ee  and so the characters $\{ \psi_a \}_{a \in \Pi}$ yield a map $\psi: \prod_{a \in \Pi} U_a(\mc{K}) \rr \C^*.$ We denote the composition with the natural projection by the same letter, \be{phi:aff} \psi: \um \rr \um/ [ \um, \um] \rr \C^*.\ee We say that $\psi$ is \emph{principal} if all of the $\psi_a$ are non-trivial, and we say that $\psi$ is \emph{unramified} if all the $\psi_a$ are trivial on $\O$ but non-trivial on $\pi^{-1} \O \subset \mc{K}.$  

\spoint To motivate the definition of the affine Whittaker function, we would like to study functions which are left $K$-invariant which satisfy the analogue of the condition (\ref{W:left}). As in the finite-dimensional case, this already imposes the following dominance condition, whose proof we omit.

\begin{nlem} \label{dom-lem} Fix $\psi$ a principal, unramified character of $U^-$ as in \S \ref{chars}. Let $f:G \rr \C_{\leq}[\Lv]$ be any function which if left $K$-invariant and satisfies the following condition (known hereafter as $(\um, \psi)$-invariance) \be{K:f:n} f (g n^-) = \psi(n^-) f(g) \text{ for } n^- \in N^- \text{ and } g \in G. \ee Then $f$ is determined by its values on $\Lv \cong A' \subset G$ and moreover $f(\pi^{\lv})=0$ unless $\lv \in \Lv_+.$ \end{nlem}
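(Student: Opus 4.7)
The plan is two-pronged, with the two assertions handled separately. First I would use the Iwasawa decomposition (\ref{iwa:K}) to reduce $f$ to its values on the split torus $A' \cong \Lv$; second I would use a root-group conjugation trick exploiting the unramified character $\psi$ to pin down the dominance condition $\lv \in \Lv_+$.

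For the first reduction, given $g \in G$ I write $g = k \pi^{\lv} n^-$ with $k \in K$, $\lv \in \Lv$, $n^- \in \um$ using the decomposition $G = \sqcup_{\lv \in \Lv} K \pi^{\lv} \um$; the coset $K \pi^{\lv} \um$, and hence the coweight $\lv$, is uniquely determined by $g$. Combining left $K$-invariance with right $(\um, \psi)$-equivariance (\ref{K:f:n}) then gives $f(g) = \psi(n^-) f(\pi^{\lv})$, so $f$ is determined by its restriction to $A'$. Any apparent ambiguity from the non-uniqueness of $k$ and $n^-$ in the decomposition is immaterial because $f$ is assumed well-defined as a function on $G$.

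For the dominance, suppose $\la a_i, \lv \ra < 0$ for some simple coroot $a_i^{\vee} \in \Pi^{\vee}$. Since $\psi$ is unramified, its restriction to $U_{-a_i}$ is trivial on $U_{-a_i, \O}$ but non-trivial on $\{x_{-a_i}(u) : u \in \pi^{-1}\O\}$, so I can choose $u \in \pi^{-1}\O \setminus \O$ with $\psi(x_{-a_i}(u)) \neq 1$; set $n^- := x_{-a_i}(u)$. The standard toral commutation $\pi^{\lv} x_{-a_i}(u) \pi^{-\lv} = x_{-a_i}(\pi^{-\la a_i, \lv \ra} u)$, together with $-\la a_i, \lv \ra + v(u) \geq 1 + (-1) = 0$, shows that $\pi^{\lv} n^- \pi^{-\lv}$ lies in $U_{-a_i, \O} \subset K$; hence left $K$-invariance yields $f(\pi^{\lv} n^-) = f(\pi^{\lv})$. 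Simultaneously, right $(\um, \psi)$-equivariance gives $f(\pi^{\lv} n^-) = \psi(n^-) f(\pi^{\lv})$. Comparing the two equations forces $(1 - \psi(n^-)) f(\pi^{\lv}) = 0$, and since $\psi(n^-) \neq 1$ we conclude $f(\pi^{\lv}) = 0$.

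The one subtlety worth flagging is that $f$ takes values in the semi-infinite ring $\C_{\leq}[\Lv]$ rather than $\C$, but because $(1 - \psi(n^-))$ is a nonzero complex scalar the vanishing $f(\pi^{\lv}) = 0$ propagates coefficient-wise and so poses no obstacle. I do not anticipate any substantive difficulty: the argument is a purely local root-subgroup computation that goes through verbatim from the finite-dimensional counterpart in \S\ref{sec-fin}, which is presumably why the authors omit the proof.
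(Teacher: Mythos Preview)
Your argument is correct and is precisely the ``standard argument'' the paper alludes to; the paper omits the proof of this lemma entirely (and likewise in the finite-dimensional case, \S\ref{sec-fin}, refers to it only as a standard argument). One trivial slip: you write ``for some simple coroot $a_i^{\vee} \in \Pi^{\vee}$'' but then correctly pair the simple \emph{root} $a_i$ with $\lv$; the condition for $\lv \in \Lv_+$ is $\la a_i, \lv \ra \geq 0$ for all $a_i \in \Pi$, so just say ``simple root.''
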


Before we present the full definition of the affine Whittaker function, we make the following simple observation: if $\lv \in \Lv_+$, then there is a map from the set \be{niw:def-a} \niw: K \pi^{\lv} \um \rr \um_{\O} \setminus \um \ee  specified as follows. Given any $x \in K \pi^{\lv} \um$ which we write as  $x=k \pi^{\lv} \nm \in K \pi^{\lv} \um$ for $k \in K, \nm \in U^-,$ recall that  $\nm$ is only uniquely specified up to left multiplication by elements of the form $\pi^{-\lv} \um_{\O} \pi^{\lv}.$ Since $\lv \in \Lv_+,$ we have $\pi^{-\lv} \um_{\O} \pi^{\lv} \subset \um_{\O}$  and so the map $\niw(x) = \um_{\O} \nm$ is well-defined. Using this remark and the above Lemma, we now present the following

\begin{de} \label{whitfn} Let $\psi$ be an unramified, principal character of $\um.$ The Whittaker function $\W: G \rr \C_{\leq}[\Lv]$ is defined to be the unique function satisfying the following two conditions, 

\begin{enumerate}

\item  $\W(k g \nm) = \psi( \nm ) \W(g)$ for $g \in G, \, \nm \in \um, \, k \in K.$
\item For $\lv \in \Lv_+$, $\W(\pi^{\lv})$ is given by the sum \be{whit:diag} \W(\pi^{\lv}) = \sum_{\mv \in \Lv} e^{\mv} q^{ \la \rho, \mv \ra} \sum_{x \in M(\lv; \mv) } \psi(\niw(x) ) \ee where $M(\lv; \mv) = K  \setminus K \pi^{\lv} \um \cap K \pi^{\mv} U,$ and $\niw: M(\lv; \mv) \rr U^-_{\O} \setminus U^-$ is the map defined as in (\ref{niw:def-a}).  

\end{enumerate}

\end{de} 

\rpoint{Remarks} Let us note the following:

\begin{enumerate} 

\item[i.] The previous Lemma \ref{dom-lem} implies that such a function $\W$ is uniquely specified by conditions (1) and (2), which are easily seen to be compatible. 

\item[ii.] In condition (2), since $\psi$ is trivial on $\um_{\O}$ it descends to a well-defined function on $\niw(M(\lv; \mv)).$ 

\item[iii.] From \cite[Theorem 1.9]{bgkp} we have that $M(\lv; \mv)$ is always equal to a finite set. It is equal to the empty set unless $\mv \leq \lv.$  Hence, $W(\pi^{\lv})$ is well-defined and takes values in $\C_{\leq}[\Lv].$

\item[iv.] We may also write $\W(\pi^{\lv})$ as a sum over $M(0; \mv) = K  \setminus K \um \cap K \pi^{\mv} U,$ as follows,
\be{W-alt}  \W(\pi^{\lv}) =  q^{\la \rho, \lv \ra} e^{\lv} \sum_{\mv \in \Lv} e^{\mv} q^{ \la \rho, \mv \ra} \sum_{x \in M(0; \mv) } \psi(\pi^{-\lv}  \niw(x)  \pi^{\lv} ) \ee where again we must use the fact that $\lv \in \Lv_+$ to note that $\psi(\pi^{-\lv}  \niw(x)  \pi^{\lv} )$ is well-defined for $x \in M(0; \mv).$

\end{enumerate} 

\subsection*{Gindikin-Karpelevic Limits} \noindent

\spoint The \emph{Gindikin-Karpelevic} sum is defined as \be{gksum} \mc{G}:= \sum_{\mv \in \Lv} | K \setminus K \um \cap K \pi^{\mv} U | e^{\mv} q^{\la \rho, \mv \ra } = \sum_{\mv \in \Lv} | M(0; \mv)| e^{\mv} q^{ \la \rho, \mv \ra } . \ee  \begin{nthm}\cite[Theorem 1.13]{bgkp}  The expression $\mc{G}$ is a well-defined expression in $\C_{\leq}[\Lv]$ and  \label{gk} \be{gk-eval} \mc{G} = \mf{m} \, \Delta \ee where  $\mf{m}$ is as in (\ref{m}) and \be{Del} \Delta = \prod_{ a \in R_+} (\frac{ 1- q^{-1} e^{-\av}}{1- e^{-\av}})^{m(\av)}, \ee   \end{nthm}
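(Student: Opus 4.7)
The plan is to establish the theorem in two phases: first show that the formal sum defining $\mc{G}$ lies in $\C_{\leq}[\Lv]$, then identify its value with $\mf{m} \, \Delta$.

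For well-definedness, I would stratify $\um$ using the affine analogue of the map $\gamma_o: \um_o \rr W_o$ from (\ref{hc-map}), namely a map $\gamma: \um \rr \aw$ whose fibers $U^{-, w}$ give a decomposition $\um = \sqcup_{w \in \aw} U^{-, w}$. On each stratum the contribution to $|M(0;\mv)|$ is controlled by explicit coordinates, reducing the finiteness of each $M(0;\mv)$ and the semi-infinite support of $\mv \mapsto |M(0;\mv)|$ to a careful analysis of valuations of the factored entries. This is precisely \cite[Theorem 1.9]{bgkp}; the new difficulty, absent in the finite case, is the handling of imaginary root directions in $\um_\pi$, where one must exploit that the affine Weyl group orbits of a fixed $\mv$ have tightly controlled shape.

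For the evaluation, I would first carry out a Gindikin-Karpelevic style recursion along real roots: a rank-one computation at each simple root $a \in \Pi$ produces a factor $(1 - q^{-1} e^{-\av})/(1 - e^{-\av})$, and these aggregate across the orbits of the affine Weyl group into the real-root contribution to $\Delta$. This rank-one recursion is, however, blind to the imaginary roots, which account for the $\mf{m}$ factor and for the multiplicities $m(j\cc) = \ell$. To extract them, I would rewrite $\mc{G}$ algebraically via the Weyl-Kac-type identity (\ref{wk-fla}), casting it as a $W$-symmetrization of a rational function whose poles live on real root hyperplanes. The main obstacle here is that such an infinite symmetrization does not visibly converge or land in $\C_{\leq}[\Lv]$; this is controlled by Cherednik's theorem on the polynomiality of infinite symmetrizers (Lemma \ref{sym:wd}), which in the finite-dimensional case is trivial but in the affine case is the technical heart of the argument.

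Once both $\mc{G}$ and $\mf{m}\Delta$ are expressed as Weyl symmetrizations of explicit rational functions lying in $\C_{\leq}[\Lv]$, the identity $\mc{G} = \mf{m}\Delta$ reduces to equality of two $W$-invariants whose ratio is a rational function in the imaginary coroots $j\cc$ alone. To pin this ratio down to the explicit form $\prod_{i,j}(1 - q^{-m_i - 1} e^{-j\cc})/(1 - q^{-m_i} e^{-j\cc})$, I would match a single extremal coefficient: on the geometric side $|M(0; 0)| = 1$, while on the algebraic side the Macdonald-Kac denominator/eta-function identities applied to $\f{g}^{\vee}$ produce exactly the exponents $m_i$ and the precise shape of $\mf{m}$ from (\ref{m}). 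The overall strategy thus splits cleanly into a geometric finiteness step, a group-theoretic rank-one recursion, and a Cherednik-type algebraic identity whose application is the principal source of difficulty.
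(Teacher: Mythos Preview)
The paper does not prove this theorem; it is quoted from \cite[Theorem 1.13]{bgkp} and used as a black-box input (in \S\ref{det-ct}, to pin down the constant $\mf{c}$ in the Whittaker formula). There is therefore no proof in the paper to compare your proposal against.

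That said, your sketch conflates the strategy of \emph{this} paper with what is needed for the Gindikin--Karpelevic formula. The well-definedness part is on the right track: the stratification $\um = \sqcup_w U^{-,w}$ is indeed the device used in \cite[\S3]{bgkp} for the finiteness of $M(0;\mv)$, and the paper says as much just after (\ref{Uo:w}). But your evaluation step has a real gap. You invoke Lemma~\ref{sym:wd} (Cherednik's $v$-finiteness of $\mc{P} = \sum_w T_w$) and the Weyl--Kac formula to recast $\mc{G}$ as a $W$-symmetrization, yet $\mc{G}$ is not of the form $\mc{P}(e^{\lv})$ for any $\lv$; the Demazure--Lusztig operators $T_w$ and the symmetrizer $\mc{P}$ enter this paper through the Iwahori--Whittaker recursion (Proposition~\ref{recur-body}), not through the Gindikin--Karpelevic sum. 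Your rank-one recursion would generate the real-root factors of $\Delta$, but nothing in your outline explains how the imaginary-root multiplicities and the specific correction $\mf{m}$ would emerge from Lemma~\ref{sym:wd} applied to $\mc{G}$ itself. According to the introduction, the argument in \cite{bgkp} is a direct group-theoretic computation (distinct from the cohomological approach of \cite{bfk}), not a Cherednik-type symmetrizer identity; the Macdonald-style constant-term considerations you allude to at the end are closer to how $\mf{m}$ is actually identified, but the path there does not run through Lemma~\ref{sym:wd}.
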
 

To relate the Gindikin-Karpelevic sum $\mc{G}$ and the Whittaker sum $\W(\pi^{\lv})$, we introduce the following definition. 

\begin{de} Let $\mv \in \Lv$ and $\lv \in \Lv_+.$ 
\begin{enumerate} 
\item We shall say that $\mv$ is of Gindikin-Karpelevic type for $\lv$ if \be{gk-typ} [e^{\mv - \lv}] \W(\pi^{\lv}) = q^{ \la \rho, \lv \ra} [e^{\mv}] \mc{G}. \ee 
\item  We shall say that $\lv$ is sufficiently dominant compared to $\mv$ if $\la a_i, \lv + \mv \ra$ is sufficiently large for each $i=1, \ldots, \ell+1.$ 
\end{enumerate}
\end{de} 

With this definition, we can now state,

\begin{nprop} \label{gk-whit} Fix $\mv \in \Lv.$ If $\lv \in \Lv_+$ is sufficiently dominant (compared to $\mv$), then  $\mv$ is of Gindikin-Karpelevic type for $\lv.$  \end{nprop}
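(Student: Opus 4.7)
The plan is to start from the alternative expression (\ref{W-alt}) for $\W(\pi^\lv)$,
\begin{equation*}
\W(\pi^\lv) \;=\; q^{\la \rho, \lv \ra} e^\lv \sum_{\nu \in \Lv} e^\nu q^{\la \rho, \nu \ra} \sum_{x \in M(0;\nu)} \psi\bigl(\pi^{-\lv} \niw(x) \pi^\lv\bigr),
\end{equation*}
and extract the coefficient at the position associated to $\mv$ directly. Comparing with $[e^\mv]\mc{G} = q^{\la \rho, \mv \ra}|M(0;\mv)|$ from (\ref{gksum}), the Gindikin-Karpelevic-type identity of the proposition reduces to the pointwise statement
\begin{equation*}
\psi\bigl(\pi^{-\lv} \niw(x) \pi^\lv\bigr) \;=\; 1 \quad \text{for every } x \in M(0;\mv),
\end{equation*}
so the task becomes purely one of controlling the unramified character $\psi$ on these torus conjugates once $\lv$ is sufficiently dominant.

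Next, I would pass to the abelianization $\um/[\um,\um]$, which by (\ref{ab:um}) decomposes as a product indexed by the simple roots, and through which $\psi$ factors by construction. Since the torus normalises $\um$ and preserves $[\um,\um]$, conjugation by $\pi^{-\lv}$ descends to this quotient and acts on each simple-root one-parameter subgroup by $x_a(t) \mapsto x_a(\pi^{\la a, \lv \ra} t)$. Writing $(t_a)_{a \in \Pi}$ for the simple-root coordinates of a chosen representative of $\niw(x)$, we obtain
\begin{equation*}
\psi\bigl(\pi^{-\lv} \niw(x) \pi^\lv\bigr) \;=\; \prod_{a \in \Pi} \psi_a\bigl(\pi^{\la a, \lv \ra} t_a\bigr),
\end{equation*}
and the unramified hypothesis that each $\psi_a$ is trivial on $\mc{O}$ makes this product equal to $1$ as soon as $\la a, \lv \ra \geq -v(t_a)$ for every $a \in \Pi$.

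What remains is to bound the simple-root valuations uniformly as $x$ varies over $M(0;\mv)$, and for this the essential input is the main finiteness result \cite[Theorem 1.9]{bgkp}: the set $M(0;\mv)$ is finite, so the associated collection of simple-root coordinate tuples is finite, and
\begin{equation*}
V_\mv \;:=\; \max_{x \in M(0;\mv),\,a \in \Pi} \bigl\{- v(t_a^{(x)})\bigr\}
\end{equation*}
is a well-defined non-negative integer. Any $\lv \in \Lv_+$ with $\la a, \lv \ra \geq V_\mv$ for all $a$---which follows from the hypothesis that $\la a, \lv + \mv \ra$ is sufficiently large for every $a$, as in the definition of sufficient dominance---gives the required pointwise triviality and hence the proposition. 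The principal obstacle is the infinite-dimensionality of $\um$ in the affine setting, which a priori allows the simple-root coordinates of $\niw(x)$ to have arbitrarily negative valuations; the finiteness of $M(0;\mv)$ from \cite{bgkp} is precisely what rescues the uniform bound and reduces the proposition to an essentially immediate local-character calculation.
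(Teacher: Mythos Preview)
Your argument is correct and reaches the same conclusion, but by a genuinely different and somewhat lighter route than the paper. Both approaches start from (\ref{W-alt}) and reduce the proposition to controlling $\pi^{-\lv}\,\niw(x)\,\pi^{\lv}$ for $x$ ranging over $M(0;\mv)$. The paper, however, establishes the stronger assertion that $\pi^{-\lv}\,\niw(x)\,\pi^{\lv}$ actually lies in $\um_{\O}$ (not merely that $\psi$ is trivial on it); to do so it invokes from \cite[Theorem~1.9]{bgkp} the approximation $K\pi^{\lv}\um \cap K\pi^{\lv-\mv}\up = K\pi^{\lv}\um_{\O} \cap K\pi^{\lv-\mv}\up$ for $\lv$ sufficiently dominant, and then unwinds. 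You instead pass to the abelianization $\um/[\um,\um]$ and use only the \emph{finiteness} of $M(0;\mv)$---also part of \cite[Theorem~1.9]{bgkp}---to bound the simple-root valuations uniformly; since $\psi$ is unramified and factors through this quotient, that suffices. Your approach is more elementary in that it avoids the approximation statement, while the paper's yields the stronger integrality conclusion $\pi^{-\lv}\,\niw(x)\,\pi^{\lv}\in\um_{\O}$, which is of independent interest. One small notational point: the simple-root factors of $\um/[\um,\um]$ are the $U_{-a}$ for $a\in\Pi$, so your conjugation formula is correct but the symbol $x_a$ should be read as $x_{-a}$.
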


\begin{proof} Let us actually show the Proposition for a fixed $-\mv \in \Lv$ (and $\lv \in \Lv_+$ to be chosen sufficiently dominant to $-\mv$). Choose any $\nm \in \um \cap K \pi^{- \mv} \up.$ In light of (\ref{W-alt}), it suffices to show that $\pi^{-\lv} \nm \pi^{\lv} \in \um_{\O}.$ Equivalently, if we write $\nm = \pi^{\lv} \wt{n}^- \pi^{-\lv}$ with $\wt{n}^- \in \um,$ then we need to show that $\wt{n}^- \in \um_{\O}.$  Now, our assumption that $\nm = \pi^{\lv} \wt{n}^- \pi^{-\lv} \in K \pi^{- \mv} \up$ implies that $\pi^{\lv} \wt{n}^- \in K \pi^{\lv - \mv} \up.$ From \cite[Theorem 1.9]{bgkp}: for fixed $\mv$ and $\lv$ sufficiently dominant relative to $\mv$, we have \be{bau-gau-approx} K \pi^{\lv} \um \cap K \pi^{\lv - \mv} \up = K \pi^{\lv} \um_{\O} \cap K \pi^{\lv - \mv} \up. \ee So we may write $\wt{n} = z v$ with $v \in \um_{\O}$ and $\pi^{\lv} z \pi^{-\lv} \in \um_{\O}.$ As $\lv \in \Lv_+$ the condition $\pi^{\lv} z \pi^{-\lv} \in \um_{\O}$ implies that $z \in \pi^{-\lv} \um_{\O} \pi^{\lv} \subset \um_{\O}.$ Hence $\wt{n}^- \in \um_{\O},$ and so $\pi^{-\lv} \nm \pi^{\lv} = \wt{n}^- \in \um_{\O}$ which is what we wanted to show.

\end{proof} 

\subsection*{Iwahori-Whittaker Sums} \label{sec-iwit} \noindent

\renewcommand{\f}{\mathbf{f}}

\spoint Let $w \in W$ and $\lv \in \Lv_+.$ Consider the following maps induced by multplication, \be{ref:1} m_{\lv} &:& \up K \times_K K \pi^{\lv} \um \rr G \\ m_{w, \lv} &:& \up w I^- \times_{I^-} I^- \pi^{\lv} \um \rr G. \ee For $\mv \in \Lv$ we consider the fibers $m_{\lv}^{-1}(\pi^{\mv})$ and $m_{w, \lv}^{-1}(\pi^{\mv}).$ From \cite[\S 4.1.3]{bkp} we know that \be{K-fib}  m_{\lv}^{-1}(\pi^{\mv}) = M(\lv; \mv) = K \setminus K \pi^{\lv} \um \cap K \pi^{\mv} \up, \ee the set involved in the definition of the Whittaker sums. In analogy with \cite[Lemma 7.3.1]{bkp}, we shall now argue the following.

\begin{nlem} \label{iw-k:lem} For a fixed $\lv \in \Lv_+$ and $\mv \in \Lv$, the fibers $m_{w, \lv}^{-1}(\pi^{\mv})$ are disjoint for $w \in W$ and moreover we have a bijective correspondence 
\be{iw-k-sums} \sqcup_{w \in W} m_{w, \lv}^{-1}(\pi^{\mv}) = m_{\lv}^{-1}(\pi^{\mv}) \ee  
\end{nlem}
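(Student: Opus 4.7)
The plan is to use the Iwahori-Matsumoto-type decomposition $K = \sqcup_{w \in W} I^- \wt{w} I$ from (\ref{K:im}) to refine the $K$-quotient appearing in the source of $m_{\lv}$. Since $I^- \subset K$, and since $\up \wt{w} I^- \subset \up K$ and $I^- \pi^{\lv} \um \subset K \pi^{\lv} \um$, the obvious set-theoretic inclusions descend to well-defined maps $\Phi_w \colon m_{w,\lv}^{-1}(\pi^{\mv}) \to m_{\lv}^{-1}(\pi^{\mv})$ on quotients (because $I^-$-equivalence on the source entails $K$-equivalence on the target). The lemma then reduces to showing that $\sqcup_{w \in W} \Phi_w$ is a bijection.

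For surjectivity, I would start with any representative $(y,z)$ in the fiber, with $yz = \pi^{\mv}$, $y \in \up K$, $z \in K \pi^{\lv} \um$, and write $y = u k$ with $u \in \up$ and $k \in K$, then decompose $k = i^- \wt{w} i$ uniquely via (\ref{K:im}). Applying the $K$-equivalence $(y, z) \sim (y i^{-1}, i z)$ produces the representative $(u i^- \wt{w}, i z)$. The first coordinate can be moved into $\up \wt{w} I^-$ by rewriting $u i^- \wt{w} = u \wt{w} (\wt{w}^{-1} i^- \wt{w})$ and checking that the conjugate lies in an Iwahori piece absorbable into an $I^-$ factor on the right (with its $\up$-part absorbed into $u$). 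The second coordinate must be placed into $I^- \pi^{\lv} \um$, and this is where the dominance $\lv \in \Lv_+$ enters essentially: it guarantees containments such as $\pi^{-\lv}(U \cap I)\pi^{\lv} \subseteq U \cap I$ and $\pi^{\lv} \um_{\O} \pi^{-\lv} \subseteq \um_{\O}$, of the kind already used after (\ref{niw:def-a}), which let one rewrite $iz \in I \pi^{\lv} \um$ as an element of $I^- \pi^{\lv} \um$ after absorbing the excess factors across $\pi^{\lv}$ into admissible subgroups of $I^-$.

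For disjointness and injectivity, suppose $(y, z)$ and $(y', z')$ represent classes in $m_{w,\lv}^{-1}(\pi^{\mv})$ and $m_{w',\lv}^{-1}(\pi^{\mv})$, respectively, with $\Phi_w[(y,z)] = \Phi_{w'}[(y',z')]$, so that $y' = y k^{-1}$ and $z' = k z$ for some $k \in K$. Writing $y = u \wt{w} j^-$ and $y' = u' \wt{w'} (j^-)'$ with $u, u' \in \up$ and $j^-, (j^-)' \in I^-$, the relation on the $y$-components combined with the Bruhat-type decomposition $G = \sqcup_{x \in \aw} A_{\O} U x I^-$ of (\ref{G:im}) forces $w = w'$, and the same equation then pins down $k \in I^-$. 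This yields both the disjointness across distinct $w$ and the injectivity of each $\Phi_w$.

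The hard step I expect is the second half of surjectivity: arranging the $z$-component to lie in $I^- \pi^{\lv} \um$ rather than merely in $K \pi^{\lv} \um$. This is where the dominance $\lv \in \Lv_+$ is indispensable, and the conjugation estimates one needs are analogous to those in the proof of Proposition \ref{gk-whit} (cf.\ (\ref{bau-gau-approx})) and to the finite-dimensional argument of \cite[Lemma 7.3.1]{bkp}; one must carry through these estimates without appeal to longest-element arguments that are unavailable in the affine setting.
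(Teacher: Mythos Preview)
Your surjectivity sketch is in the right spirit, but your injectivity/disjointness argument has a genuine gap. You propose to compare the $y$-components: from $y'=yk^{-1}$ with $y\in U\wt w I^-$, $y'\in U\wt{w'}I^-$ and $k\in K$, you invoke $G=\sqcup_{x\in\aw}A_\O U x I^-$ to force $w=w'$. But that decomposition pins down the \emph{right $I^-$-coset} of an element, whereas $y$ and $y'$ differ by right multiplication by an element of $K$, not of $I^-$. Concretely, take $y=1\in U\cdot 1\cdot I^-$ and $k=\wt w_a$ for a simple reflection: then $y'=yk^{-1}=\wt w_a^{-1}\in U\wt w_a I^-$, so $w=1\neq w_a=w'$ and $k\notin I^-$. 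Thus the first-coordinate relation alone cannot separate the pieces, and even granting $w=w'$ it will not force $k\in I^-$.

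The paper runs the injectivity through the \emph{second} coordinate instead, and this is where the dominance hypothesis actually does its work. If $c=vc'$ with $c,c'\in I^-\pi^{\lv}\um$ and $v\in I^-\sigma I^-$, one first shows (using $I^-=U_\pi U^-_\O A_\O$, the fact that $\sigma U_\pi\sigma^{-1}\subset I^-$ for any $\sigma$, and $\pi^{-\lv}U^-_\O\pi^{\lv}\subset \um$ since $\lv\in\Lv_+$) that $I^-\sigma I^-\pi^{\lv}\um=I^-\sigma\pi^{\lv}\um$. Then $I^-\pi^{\lv}\um\cap I^-\sigma\pi^{\lv}\um\neq\emptyset$ forces $\sigma=1$ by the disjointness of the $(I^-,\um)$-type decomposition. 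This simultaneously gives disjointness of the images of the $\varphi_w$ and injectivity of each $\varphi_w$. A minor related point: the paper uses $K=\sqcup_w I^-\wt w I^-$ (both Iwahoris opposite), not $K=\sqcup_w I^-\wt w I$ as in (\ref{K:im}); with the $I^-\wt w I^-$ version the surjectivity really is immediate from the Iwahori factorization $I^-=U_\pi U^-_\O A_\O=U^-_\O U_\pi A_\O$, so the step you flag as ``hard'' dissolves once you reorganize the argument around the $c$-side.
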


\begin{proof} Consider the map \be{varphi} \varphi: \sqcup_{w \in W} m_{w, \lv}^{-1}(\pi^{\mv}) \rr m_{\lv}^{-1}(\pi^{\mv}) \ee which sends $(a, c) \mapsto (a, c)$ where $a \in \up w I^-$ and $c \in I^- \pi^{\lv} \um.$ This map is well-defined: if $j \in I^-$ and $(a, c) \in m_{w,\lv}^{-1}(\pi^{\mv})$ then $(aj, j^{-1}c)$ represents the same element in $m_{\lv}^{-1}(\pi^{\mv}).$ Denote by $\varphi_w$ the restriction of $\varphi$ to each $m_{w, \lv}^{-1}(\pi^{\mv}).$ We claim that $\varphi$ is the required bijection. 

That $\varphi$ is surjective follows at once from the Iwahori-Matsumoto decomposition \be{Kim} K = \sqcup_{w \in W} I^- \wt{w} I^-, \ee and the decomposition of $I^-=\up_{\pi} \um_{\O} A_{\O}= \um_{\O} \up_{\pi} A_{\O}$ as described in \cite[Proposition 3.12]{bkp}.

\noindent To prove the injectivity of $\varphi$ we need to show
\begin{description}
\item[(i)] the images of $\varphi_w$ and $\varphi_{w'}$ are disjoint for distinct elements $w, w' \in W$; and 
\item[(ii)] the map $\varphi_w$ is injective for $w \in W$ \end{description} 

\begin{nclaim} Let $(a, c), \, (a', c') \in \sqcup_{w \in W} m_{w, \lv}^{-1}(\pi^{\mv})$ be such that $\varphi(a, c) = \varphi(a', c').$ Then there exists (a unique) $w \in W$ such that $(a, c), \, (a', c') \in m_{w, \lv}^{-1}(\pi^{\mv}).$  
\end{nclaim}
\begin{proof} Since $\varphi(a, c) = \varphi(a', c'),$ by definition there exists $v \in K$ such that $c = v c'.$ Suppose that $v \in I^- \sigma I^-,$ with $\sigma \in W.$ The claim will follow if we can show that $\sigma=1.$ If $c = v c',$ then by the assumption we have made on $v$ we have that \be{cl:1} I^- \pi^{\lv} \um \cap I^- \sigma I^- \pi^{\lv} \um = I^- \pi^{\lv} \um \cap I^- \sigma \pi^{\lv} \um \neq \emptyset .\ee By the disjointness of the decomposition (see \cite[\S 3.4]{bkp}), we may conclude that $\sigma=1$ as desired. 
\end{proof}
The statement (i) above is just the above Claim. As for (ii): if $(a, c)$ and $(a', c')$ belong to $m_{w, \lv}^{-1}(\pi^{\mv})$ and have the same image by $\varphi$ then we must again have $c' \in I^- c.$ Hence $(a, c)$ and $(a', c')$ are equivalent in $m_{w, \lv}^{-1}(\pi^{\mv}).$ \end{proof}

\spoint For fixed $\lv \in \Lv_+$, the restriction of (\ref{niw:def-a}) to $I^- \pi^{\lv} \um$ gives a map \be{proj:nm} \niw: I \pi^{\lv} \um \rr  \um_{\O} \setminus \um, \ee and hence a map $\niw: m_{w, \lv}^{-1}(\pi^{\mv}) \rr \um_{\O} \setminus \um$ by composing the projection of $m_{w, \lv}^{-1}(\pi^{\mv})$ onto its last coordinate.  

\begin{de} \label{iw-whit:defn} Let $w \in W, \lv \in \Lv_+,$ and $\psi$ a principal, unramified character. Then the \emph{Iwahori-Whittaker} sum $\W_{w, \lv}$ is defined as, \be{iw-whit}
 \W_{w, \lv} &:=& \sum_{\mv \in \Lv} e^{\mv} q^{\la \rho, \mv \ra} \sum_{ x \in m_{w, \lv}^{-1}(\pi^{\mv})}  \psi(\niw(x)). \ee \end{de}

\rpoint{Remarks} \begin{enumerate}
\item  In light of Lemma \ref{iw-k:lem}, and (\ref{K-fib}), for any $\lv \in \Lv_+$ we have \be{W:sum-w} \W(\pi^{\lv}) = \sum_{w \in W} \W_{w, \lv}. \ee

\item The $\W_{w, \lv}$ should be compared to the $\W_{o}(\pi^{\lv})$ from (\ref{W:wpart}). We shall actually view $\W_{w, \lv}$ as a function of $G$ below (see Lemma \ref{whit:as-av}) and for this reason we prefer to adopt this peculiar notation. 

\item In \S \ref{fin_2}, we show how the recursion relation Proposition \ref{recur-body} it follows that the sum $\W_{w, \lv}$ has finite support. It would be interesting to see this more directly from the definitions. 

\end{enumerate}

\section{Recursion formulas} \label{sec-rec}
\subsection*{Function Spaces and Some Operators } \noindent
\newcommand{\e}{\mathbf{e}}

\spoint We shall generally be interested in the space $M(G)$ of functions on $G$ which are left-invariant under the group $A_{\O}U.$ Such functions carry a natural action of $\C[\Lv]$ on the left, defined by requiring that  \be{act:2} e^{\lv} f(g) = q^{ \la \rho, \lv \ra} f(\pi^{-\lv} g) \text{ for } g \in G, \lv \in \Lv, \ee and then extending linearly to $\C[\Lv].$

Recall that the group $G$ may be decomposed into disjoint $(A_{\O} U, I^-)$ double cosets parametrized by the "affinized" Weyl group $\aw:= \Lv \rtimes W,$ a typical element of which will be written as $x=(\mv, w) \in \aw$ where $\mv \in \Lv, w \in W.$  Hence, any function $f: G \rr \C$ which is left $A_{\O}U$ and right $I^-$ invariant may be written as a (possibly infinite) sum of the following form \be{f:exp} f = \sum_{x \in \aw } c_x \v_x, \, c_x \in \C, \ee and where $\v_x$ is the characteristic function of the double coset $A_{\O} \up x I^-, x \in \aw.$ Note that here we are using the symbol $x$ to denote both the element in the abstract "affine" Weyl group and for any lift of it to $N \subset G.$ Let \be{M:i} M(G, I^-) = \C(A_{\O}\up \setminus G / I^-) \ee denote the set of all such functions. Under the action (\ref{act:2}), we have \be{act:base} e^{\lv} \v_x = q^{ \la \rho, \lv \ra}  \v_{\lv. x}, \ee where if $x = (\mv, w) \in \aw$ as above, then $\lv. x = (\mv+\lv, w) \in \aw$. The space $M(G, I^-)$ also carries a right action by convolution with the Hecke algebra $H^-_W:= \C(I^- \setminus K / I^-)$ defined in \cite[\S5.4]{bkp}. The algebra $H^-_W$ has a basis $T_w,$ $w \in W$ consisting of the characteristic functions of the double cosets $I^- w I^-.$ We denote this right action by $\star$ and note that \be{star:act} \ve_{\lv} \star T_w &=& \ve_{(\lv, w)}, \, \text{ where }   w \in W,  \lv \in \Lv. \ee

\spoint Let $\psi: \um \rr \C^*$ be any character, and let $M(G, \um, \psi)$ denote the space of functions on $G$ which are $A_{\O} \up$-left invariant and satisfy the following invariance on the right under $\um:$ \be{f-rt} f(g \nm) = \psi(\nm) f(g). \ee  Such functions carry a natural action of $\C[\Lv]$ on the left using the same formula as in (\ref{act:2}).  

Consider the element $ \wid \in M(G, \um, \psi)$ defined as follows, \be{wid-def} \wid(g) = \begin{cases} \psi(\nm) & \text{ if } g = n n^- \text{ where } n \in \up, n^- \in \um \\  0 & \text{ otherwise} \end{cases} \ee  The support of such a function is contained in the big cell $B \um.$ We denote by $M^{\gen}(G, \um, \psi)$ all such functions in $M(G, \um, \psi)$ which are supported on $B\um,$ i.e., those functions $f \in M(G, \um, \psi)$ of the form \be{f:gen} f = \sum_{\mv \in \Lv} c_{\mv} e^{\mv} \wid, \text{ where } c_{\mv} \in \C. \ee Note that  $c_{\mv}= f(\pi^{\mv})q^{ -\la  \rho, \mv \ra}.$

\tpoint{Averaging Operators} Let $\psi: \um \rr \C^*$ now be an unramified, principal character as in \S \ref{chars}, and let $\lv \in \Lv_+.$ Then the two function spaces $M(G, I^-)$ and $M(G, \um, \psi)$ may be connected using an averaging operator \be{avg:1} \Av_{\psi, \lv}: M(G, I^-)  \rr M(G, \um, \psi),  \ee defined as follows. If $x \in \aw$ and $\ve_x = \mathbf{1}_{A_{\O} \up x I^-}$ the characteristic function of $A_\O U xI^-$, we set  \be{avg:2} \Av_{\psi, \lv} (\ve_x) (g) = \sum_{z \in m_{x, \lv}^{-1} (g) } \psi(\niw(z)) \text{ for } g \in G,\ee where $m_{x, \lv}: \ve_x \times_{I^-} I^- \pi^{\lv} \um \rr G$ is the usual multiplication map and $\niw: m_{x, \lv}^{-1} (g) \rr \um_{\O}  \setminus \um$ is the projection defined as in (\ref{proj:nm}). We note that $\Av_{\psi, \lv}(\ve_x)$ is left $\up$-invariant and is right $(\um, \psi)$-invariant (i.e., satisfies (\ref{K:f:n}) ). However, the function $\Av_{\psi, \lv}(\ve_x)$ is not necessarily in $M^{\gen}(G, \um, \psi),$ i.e., its support may lie outside of the big cell. We let $\Av^{\gen}_{\psi, \lv}$ be the restriction of $\Av_{\psi, \lv}$ to the big cell, i.e., \be{av-def} \Av^{\gen}_{\psi, \lv}(\ve_x)(g) = \begin{cases} \Av_{\psi, \lv}(\ve_x)(g) & \text{ if } g \in B \um \\
0 & \text{ otherwise} . \end{cases} \ee We may extend the above construction linearly to define maps \be{av-maps} \begin{array}{lcr} \Av_{\psi, \lv}: M(G, I^-)  \rr M(G, \um, \psi) \text{ and } \Av^{\gen}_{\psi, \lv}: M(G, I^-)  \rr M^{\gen}(G, \um, \psi) \end{array}. \ee We record here a simple property: for  $f \in M(G, I^-)$ and $\mv \in \Lv$ \be{avg-prop:1} \Av_{\psi, \lv}( e^{\mv} f) = e^{\mv} \Av_{\psi, \lv}(f).\ee To prove this formula, one may reduce to the case that $f= \ve_w$ for $w \in W,$ in which case the result follows from the definitions. Finally, note the following, which is essentially just a consequence of the definitions and which establishes the connection between the averaging operators and the Iwahori Whittaker functions defined in (\ref{iw-whit:defn}). 

\begin{nlem} \label{whit:as-av} For $w \in W$ and $\lv \in \Lv_+.$ Let $\psi$ be a principal, unramified character. Then we have \be{avg:whit:1} \W_{w, \lv} = \sum_{\mv \in \Lv} e^{\mv} q^{ \la \rho, \mv \ra} \Av_{\psi, \lv}(\ve_w)(\pi^{\mv}) \ee \end{nlem}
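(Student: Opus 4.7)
The plan is to prove this identity by unwinding the definitions on both sides and matching them term by term; no combinatorial or analytic input beyond the definitions is needed.

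First, I would apply the definition of the averaging operator (\ref{avg:2}) directly to $\ve_w$ (viewing $w \in W$ as the element $(0,w) \in \aw$), so that for any $g \in G$,
\[
\Av_{\psi, \lv}(\ve_w)(g) \;=\; \sum_{z \in m_{w, \lv}^{-1}(g)} \psi(\niw(z)).
\]
Specializing to $g = \pi^{\mv}$ and substituting into the right-hand side of (\ref{avg:whit:1}) would yield
\[
\sum_{\mv \in \Lv} e^{\mv} q^{\la \rho, \mv \ra} \sum_{z \in m_{w, \lv}^{-1}(\pi^{\mv})} \psi(\niw(z)),
\]
which is precisely the sum defining $\W_{w, \lv}$ in Definition \ref{iw-whit:defn}.

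The only subtle point to verify is the compatibility between the two multiplication maps going under the name $m_{w, \lv}$: the one in (\ref{ref:1}), defined on $\up w I^- \times_{I^-} I^- \pi^{\lv} \um$, and the one implicit in the averaging operator (\ref{avg:2}), whose source is the larger space $A_\O \up w I^- \times_{I^-} I^- \pi^{\lv} \um$ where $\ve_w = \mathbf{1}_{A_\O U w I^-}$. I would check that inclusion and passage to $A_\O U$-orbits on the left induce a bijection of fibers over $\pi^{\mv}$: since $A_\O$ normalizes each $I^-$-double coset and the left $A_\O\up$-invariance is already built into the way $\ve_x$ is constructed in (\ref{f:exp}), the counting in each fiber is identical, and the values of $\niw(\cdot)$ (which land in $\um_{\O} \setminus \um$) are preserved. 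The character $\psi$ is well-defined on this quotient because $\psi|_{\um_{\O}}$ is trivial, so the summands $\psi(\niw(z))$ agree as well.

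Thus the identification is entirely formal: the main (and only) obstacle is keeping the two ambient spaces and their quotient structures straight, but because both sides are built from the same fibers $m_{w, \lv}^{-1}(\pi^{\mv})$ weighted by the same scalar $e^{\mv} q^{\la \rho, \mv \ra}$, the identity drops out immediately. I expect the proof to be a short paragraph of bookkeeping once these conventions are pinned down.
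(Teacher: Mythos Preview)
Your proposal is correct and matches the paper's own treatment: the paper states immediately before the Lemma that it ``is essentially just a consequence of the definitions'' and gives no further proof. The only point you flag as subtle --- the apparent discrepancy between the domain $\up w I^-$ in (\ref{ref:1}) and the support $A_\O \up w I^-$ of $\ve_w$ --- is in fact no discrepancy at all, since $A_\O$ commutes with any lift of $w$ and is contained in $I^-$, so $A_\O \up w I^- = \up A_\O w I^- = \up w A_\O I^- = \up w I^-$.
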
 

\rpoint{Notation} For a function $f \in \C(A_{\O} \up \setminus G)$ we define the formal series $\Psi(f)$ which is constructed from the values of $f$ on the torus as follows,  \be{Psi:f} \Psi(f):= \sum_{\mv \in \Lv} f(\pi^{\mv}) e^{\mv} q^{\la \rho, \mv \ra }. \ee In this notation, the above Lemma states, \be{whit:av-2} \Psi(\Av_{\psi, \lv}(\ve_w) ) = \W_{w, \lv} \ee

\tpoint{Intertwining Operators} For each $a \in \Pi$ we may define the operator \be{I_a} I_a: M(G) \rr M(G) \ee via the following integral formula: let $\varphi \in M(G)$ and $x \in G,$ \be{int:def} I_a(\varphi) (x)  = \int_{U_a} \varphi(w_a u_a x) du_a, \ee where $du_a$ is the Haar measure on $U_a$ which gives $U_a \cap K$ volume $1.$ It is well-known that the operators $I_a$ satisfy the braid relations, i.e. if $w \in W$ has a reduced decomposition $w= w_{a_1} \cdots w_{a_r}$ with $a_i \in \Pi$ for $i=1, \ldots, r,$ and we define \be{Iw} I_w:= I_{a_1}  \circ \cdots \circ I_{a_r}, \ee then $I_w$ does not depend on the reduced decomposition chosen. One may also verify the following standard facts, \be{int-flas} I_w ( e^{\lv} f) &=& e^{w \lv} I_a(f) \text{ for } w \in W,\, \lv \in \Lv, \,  f \in M(G) \\
I_w(f \star h) &=& I_w(f) \star h \text{ for } w \in W, \, h \in H^-_W,\, f \in M(G, I^-) 
\ee 

The following result summarizes the properties of the operators introduced above which we shall need for the proof of Proposition \ref{recur-body}.

\begin{nlem} \label{it-sum} Let $\psi: \um \rr \C$ be an unramified, principal character constructed as in \S \ref{chars} and $\lv \in \Lv_+.$ Then 
\begin{enumerate} 
\item $I_w( \Av_{\psi, \lv}(f) ) = \Av_{\psi, \lv}(I_w( f))$ for $w \in W$ and $f \in M(G, I^-).$
\item $I_a (\ve_1) = \ve_{w_a} + \frac{1 - q^{-1}}{1- e^{\av}} \ve_1= \ve_{w_a} - \b(\av)\ve_1$
\item  $I_a (\wid) = \frac{1 - q^{-1} e^{-\av}}{1 - e^{\av}} \wid = \c(\av) \wid , $

\end{enumerate} where the rational expressions in parts (2) and (3) are expanded in positive powers of $e^{\av}$ and we have used the same notation as in the finite dimensional case for $\c(\av)$ and $\b(\av)$ (see \ref{b:c}). \end{nlem}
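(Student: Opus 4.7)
The plan is to treat the three items of the lemma by different means: parts (2) and (3) are essentially rank-one $SL_2$ integrals in the subgroup generated by $U_a, U_{-a}$, while part (1) is a formal commutativity statement saying that $I_w$ (which acts by integration on the left) and $\Av_{\psi,\lv}$ (which acts by a fiberwise sum on the right) are on ``opposite sides'' of $G$ and hence commute.

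For part (1), I would first use the braid relations satisfied by the $I_w$ (see \ref{Iw}) to reduce to the case $w=w_a$ for $a\in\Pi$ simple. Since both sides are linear, it suffices to check the identity on a basis element $\ve_x$, $x\in\aw$. Unwinding the definitions,
\[
I_a(\Av_{\psi,\lv}(\ve_x))(g) \;=\; \int_{U_a}\,\sum_{z\in m_{x,\lv}^{-1}(w_a u_a g)} \psi(\niw(z))\,du_a,
\]
and the substitution $z=(a',c)\mapsto (w_a u_a a', c)$ reinterprets the integral/sum as a sum over $z'\in m_{x,\lv}^{-1}(g)$ in which the integrand is $\int_{U_a}\ve_x(w_a u_a \cdot)\,du_a$ evaluated at the appropriate left coordinate, i.e.\ $I_a(\ve_x)$. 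Thus one recovers $\Av_{\psi,\lv}(I_a(\ve_x))(g)$. The interchange of $\int$ and $\sum$ is harmless once one checks that for each fixed $g$ only a compact subset of $U_a$ contributes and the fiber $m_{x,\lv}^{-1}(g)$ is finite; both follow from the structural finiteness results of \cite{bgkp} together with the fact that the $U_a$-orbit of any single coset meets only finitely many $(A_\O U,I^-)$-double cosets.

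For parts (2) and (3) I would carry out a rank-one computation in the $SL_2$ subgroup spanned by $U_a, U_{-a}, A$. The key inputs are the Bruhat-type identity
\[
w_a x_a(t) \;=\; t^{-\av} x_a(-t)\, x_{-a}(1/t) \qquad (t\neq 0),
\]
together with the Iwasawa decomposition of $x_{-a}(s)$ for $v(s)<0$, which takes the shape $x_{-a}(s)=s^{-\av} x_a(s)\, k_s$ with $k_s\in K$ reducing to $w_a$ modulo $\pi$ (so $k_s\in I^- w_a I^-$). Evaluating $I_a(\ve_1)$ and $I_a(\wid)$ at $\pi^{\mv}$ amounts to parametrizing $U_a$ by $t\in\mc K$ via $x_a$ and splitting the integral $\int_{U_a}du=\sum_{k\in\zee}\int_{v(t)=k}$ according to $k=v(t)$. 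For each $k$, the above identities let one compute explicitly in which Iwahori/Bruhat cell $w_a x_a(t)\pi^{\mv}$ sits: for part (2) one finds that $v(t)\geq 1$ feeds into $\ve_{w_a}$ while the remaining valuations feed into $\ve_1$ with an $A$-shift depending linearly on $v(t)$, and summing the resulting geometric series of $q^{-k}$'s reproduces $\ve_{w_a}+\tfrac{1-q^{-1}}{1-e^{\av}}\ve_1$ after expansion in positive powers of $e^{\av}$.

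Part (3) is analogous but weighted by the character $\psi_a$; the new input is the standard Gauss-sum evaluation $\int_{\O^*}\psi_a(u\pi^{k})\,du$, which equals $1-q^{-1}$ for $k\geq 0$, equals $-q^{-1}$ for $k=-1$, and vanishes for $k\leq -2$, thanks to the unramifiedness of $\psi_a$. Collecting these contributions against the $A$-shift picked up by $w_a x_a(t)\pi^{\mv}$ produces the expansion of $\frac{1-q^{-1}e^{-\av}}{1-e^{\av}}\wid$ in positive powers of $e^{\av}$. The main obstacle I anticipate is in part (3), namely correctly matching (a)~the $A_\O$-invariance convention by which $\wid$ is extended from the open cell $UU^-$ to $AUU^-$, (b)~the normalization of the Haar measure on $U_a$, and (c)~the formal-series expansion prescribed for the rational function $\c(\av)$; once these conventions are aligned, the rank-one calculation and the Fubini argument for part (1) proceed in a straightforward manner.
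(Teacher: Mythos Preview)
Your approach is essentially the same as the paper's: the paper dispatches the lemma in two sentences, saying that part (1) ``follows from a Fubini-type result in our algebraic setting'' and that parts (2) and (3) ``are simple rank 1 computations.'' Your proposal fleshes out precisely these two ingredients.

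One small remark on your presentation of part (1): the ``substitution $(a',c)\mapsto (w_a u_a a',c)$'' is not quite the right way to phrase the Fubini step, since $w_a u_a a'$ need not lie in $A_\O U x I^-$. It is cleaner to observe that the averaging operator can be rewritten as
\[
\Av_{\psi,\lv}(f)(g)\;=\;\sum_{c\in I^-\backslash I^-\pi^{\lv}U^-} f(gc^{-1})\,\psi(\niw(c)),
\]
after which both $I_a(\Av_{\psi,\lv}(f))(g)$ and $\Av_{\psi,\lv}(I_a(f))(g)$ unwind to the common expression $\int_{U_a}\sum_c f(w_a u_a g c^{-1})\psi(\niw(c))\,du_a$, and the commutation is literally an interchange of $\int_{U_a}$ and $\sum_c$. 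This is what you mean, and it is exactly the ``Fubini'' the paper invokes. Your rank-one sketches for (2) and (3), including the Gauss-sum input for (3), are the standard computations the paper has in mind.
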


The first part of the theorem follows from a Fubini-type result in our algebraic setting. The second and third parts are simple rank 1 computations.

\subsection*{Iwahori-Whittaker Recursion Formula} \noindent

\spoint \label{completions} Let $J \subset R_{re}$ be a finite subset of real roots, and consider the subalgebra of $B_J = \C [ e^{a^{\vee}} ]_{a \in J} \subset \C[\Lv].$  Denote by $\hat{B}_J$ the completion of $B_J$ with respect to the maximal ideal spanned by $e^{a^{\vee}}, a \in J.$    Let us set \be{cj} \C_J[\Lv] :=  \hat{B}_J \otimes_{B_J} \C[\Lv]. \ee  For each $w \in W$ we let $S_w = R_{+} \cap w R_{-}$ and write $\C_{w}[\Lv]$ in place of $\C_{S_w}[\Lv].$ The following is the main recursion result we wish to show,

\begin{nprop} \label{recur-body} Fix $\lv \in \Lv$ and let $w, w' \in W$ be related as follows: $w = w_a w'$ with $a \in \Pi_o$ a simple root, and $\ell(w) = 1 + \ell(w').$ Then we have the following identity in the completion, $\C_{w_a}[\Lv],$ \be{W:rec} \W_{w,\lv} = \c(\av) \W_{w', \lv}^{w_a} + \b(\av) \W_{w', \lv}, \ee where $\W_{w', \lv}^{w_a}$ denotes the termwise application of $w_a$ to the expression $\W_{w', \lv},$ and where $\b(\av)$ and $\c(\av)$ are as in (\ref{b:c}). \end{nprop}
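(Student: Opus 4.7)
The plan is to translate the recursion into an identity in the Hecke module $M(G, I^-)$, apply the averaging operator $\Av_{\psi,\lv}$ to pass to $M(G, \um, \psi)$, and finally sample values on the torus via $\Psi$ to get Iwahori--Whittaker sums. The Hecke-algebraic step goes as follows: length-additivity $\ell(w) = \ell(w_a) + \ell(w')$ gives $T_w = T_{w_a} T_{w'}$ in $H^-_W$, so by (\ref{star:act}) one has $\ve_w = \ve_1 \star T_w = \ve_{w_a} \star T_{w'}$. Rewriting $\ve_{w_a} = I_a(\ve_1) + \b(\av)\,\ve_1$ via Lemma \ref{it-sum}(2) and invoking the commutation $I_a(f \star h) = I_a(f) \star h$ from (\ref{int-flas}), one arrives at
\begin{equation*}
\ve_w \;=\; I_a(\ve_{w'}) + \b(\av)\,\ve_{w'} \qquad \text{in } M(G, I^-).
\end{equation*}

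Next I would apply $\Av_{\psi,\lv}$ to this identity, using Lemma \ref{it-sum}(1) to commute it past $I_a$ and (\ref{avg-prop:1}) to commute it past multiplication by $\b(\av)$, yielding the analogous identity for $\Av_{\psi,\lv}(\ve_w)$. Applying $\Psi$ and invoking Lemma \ref{whit:as-av} then reduces the proposition to the single identity
\begin{equation*}
\Psi\bigl(I_a(\Av_{\psi,\lv}(\ve_{w'}))\bigr) \;=\; \c(\av)\,\W_{w',\lv}^{w_a}.
\end{equation*}
Since $\Psi$ samples its argument only on the torus, which lies in the big cell $B\um$, I would reduce to the generic part $\Av^{\gen}_{\psi,\lv}(\ve_{w'}) \in M^{\gen}(G, \um, \psi)$ and expand it as $\sum_\mv c_\mv\, e^\mv\, \wid$. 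Combining the transformation rule $I_a(e^\mv h) = e^{w_a \mv}\,I_a(h)$ from (\ref{int-flas}) with Lemma \ref{it-sum}(3) gives
\begin{equation*}
I_a\bigl(\Av^{\gen}_{\psi,\lv}(\ve_{w'})\bigr) \;=\; \c(\av) \sum_\mv c_\mv\, e^{w_a \mv}\, \wid,
\end{equation*}
so that sampling on the torus should produce exactly the termwise $w_a$-action on $\W_{w',\lv}$ multiplied by $\c(\av)$.

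The hard part will be legitimizing the reduction to the generic part in the presence of $I_a$: although $\Psi$ only depends on torus values, $I_a$ integrates over $U_a$ and can a priori mix the big-cell and non-big-cell strata of $\Av_{\psi,\lv}(\ve_{w'})$. My plan to handle this is a rank-one analysis along the $\mathrm{SL}_2$-subgroup generated by $a$: factor the $U_a$-integral along the Bruhat decomposition of $G$ relative to this subgroup and show that the non-generic contributions either vanish using the support condition for $\wid$ or get absorbed into the completion $\C_{w_a}[\Lv]$, whose only singular locus is $e^\av = 1$, precisely the sole root of $S_{w_a} = \{a\}$. A related subsidiary point to verify is that all the infinite series that arise actually lie in $\C_{w_a}[\Lv]$ and not merely in some larger completion, which should follow from the explicit form of $\b(\av)$ and $\c(\av)$.
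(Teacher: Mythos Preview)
Your proposal is correct and follows essentially the same route as the paper's proof: rewrite $\ve_w = I_a(\ve_{w'}) + \b(\av)\ve_{w'}$, apply $\Av_{\psi,\lv}$ and commute past $I_a$, then pass to the generic part and use Lemma~\ref{it-sum}(3) before sampling on the torus via $\Psi$.

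The one place you diverge is in your assessment of the ``hard part.'' The reduction to the generic part is much simpler than the rank-one Bruhat analysis you sketch. The paper's argument is just this: to evaluate $I_a(\Av_{\psi,\lv}(\ve_{w'}))(\pi^{\mv})$ one integrates $\Av_{\psi,\lv}(\ve_{w'})(w_a u_a \pi^{\mv})$ over $u_a \in U_a$; the single point $u_a = 1$ has measure zero, and for $u_a \neq 1$ the standard rank-one identity places $w_a u_a \pi^{\mv}$ in the big cell $B\um$, so $\Av_{\psi,\lv}(\ve_{w'})$ may be replaced by $\Av^{\gen}_{\psi,\lv}(\ve_{w'})$ on the nose. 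No absorption into the completion or support analysis beyond this is needed.
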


\rpoint{Remark}  

\begin{enumerate}

\item If $w=1$ then we have that $\W_{1, \lv} = q^{ \la \rho, \lv \ra} e^{\lv}$ since $K \pi^{\lv} \up \cap K \pi^{\lv} \um = K \pi^{\lv}$ (see for example Theorem \ref{gk}). 
\item In fact, the above equality holds in $\C[\Lv],$ i.e., expanding the rational functions in the right hand side of (\ref{W:rec}) in $\C_{w_a}[\Lv],$ a cancellation occurs to ensure the whole expression lies in $\C[\Lv]$.

\end{enumerate}

\begin{proof}[Proof of Proposition ] \label{pf-recur} 

Let $w \in W$ be written as $w = w_a w'$ as in the statement of the Proposition. From Lemma \ref{it-sum} (1), we have an equality of functions on $G$, \be{rec:1} I_a ( \Av_{\psi, \lv}(\ve_{w'}) ) =  \Av_{\psi, \lv}(I_a (\ve_{w'} )). \ee
The right hand side is computed using Lemma \ref{it-sum}(3) and (\ref{int-flas}), \be{rec:2} \Av_{\psi, \lv}(I_a (\ve_{w'} )) = \Av_{\psi, \lv}(\ve_{w_a w'} - \b(\av) \ve_{w'} ) =   \Av_{\psi, \lv}(\ve_{w_a w'} ) -  \b(\av) \Av_{\psi, \lv}(\ve_{w'}). \ee Now we focus on the left hand side of (\ref{rec:1}). We begin by noting,

\begin{nclaim} On the big cell, we have \be{i-gen:0} I_a ( \Av_{\psi, \lv}(\ve_{w'}) ) (g)  = I_a ( \Av^{\gen}_{\psi, \lv}(\ve_{w'}) ) (g) \text{ for } g \in B \um.\ee \end{nclaim}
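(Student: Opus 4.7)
The plan is to exploit the fact that, by construction,
\[
f := \Av_{\psi, \lv}(\ve_{w'}) - \Av^{\gen}_{\psi, \lv}(\ve_{w'})
\]
vanishes identically on the big cell $B\um$. Since $I_a(f)(g) = \int_{U_a} f(w_a u_a g)\, du_a$, the claim will follow if I can show that for every $g \in B\um$ the locus
\[
S_g := \{u_a \in U_a : w_a u_a g \notin B\um\}
\]
has Haar measure zero in $U_a$, for then the integrand vanishes almost everywhere on $U_a$ and the integral is zero.

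First I would use the uniqueness of the big-cell factorization $g = b n^-$ with $b \in B$ and $n^- \in \um$, together with the stability $B\um \cdot \um \subset B\um$, to reduce the problem to showing that $\{u_a \in U_a : w_a u_a b \notin B\um\}$ has measure zero for every fixed $b \in B$. Next I would reduce this to a rank-one calculation inside the subgroup $G_a$ generated by $U_a$, $U_{-a}$, and $A$: writing $b = tu$ with $t \in A$ and $u \in U$, and using the decomposition $U = U_a \cdot U'$ where $U'$ is generated by the root subgroups $U_\beta$ for $\beta \in R_+ \setminus \{a\}$, the standard commutation relations --- in particular the facts that $w_a$ normalizes $U'$, conjugates $U_a$ to $U_{-a}$, and that $A$ normalizes every real root subgroup --- allow $w_a u_a b$ to be rewritten so that its membership in $B\um$ is determined entirely inside $G_a$, and reduces to whether an element of the form $w_a u_a' \, t'$ lies in $B\um$ for some reparametrized $u_a' \in U_a$ and $t' \in A$.

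This last point is the classical $SL_2$-style computation: for fixed $t' \in A$, the element $w_a u_a' \, t'$ fails to lie in $B\um$ for at most a single value of $u_a' \in U_a$ (the one at which the analogue of the $SL_2$ lower-right matrix entry vanishes), and this single point has Haar measure zero. Consequently $S_g$ has measure zero, $I_a(f)(g) = 0$, and the displayed equality (\ref{i-gen:0}) holds.

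The main obstacle will be executing the commutation argument cleanly in the affine Kac-Moody setting, where the bracket of $U_a$ with the other positive root subgroups can a priori produce infinitely many higher-order correction terms. However, all such commutators lie in $U$ or in $U'$, they are compatible with the action of $w_a$ (which permutes $R_+ \setminus \{a\}$ among itself), and they sit on the "correct side" of the Bruhat decomposition so as not to disturb membership in $B\um$. Once these structural facts from the Tits functor construction \cite{tits} are in hand, the reduction to the rank-one calculation is formal, and the $SL_2$ computation finishes the proof.
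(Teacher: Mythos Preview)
Your argument is correct and shares the same core idea as the paper's: the difference $f = \Av_{\psi,\lv}(\ve_{w'}) - \Av^{\gen}_{\psi,\lv}(\ve_{w'})$ vanishes on the big cell, so it suffices to show that for $g \in B\um$ the set of $u_a \in U_a$ with $w_a u_a g \notin B\um$ has measure zero, which is ultimately an $SL_2$ fact.

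The paper takes a shorter route to the rank-one reduction. Rather than factoring a general $g = b n^-$ and pushing $w_a u_a$ past $b$ via commutation relations in $U$, the paper first observes that both sides of the claimed identity are left $A_\O U$-invariant and right $(\um,\psi)$-equivariant. Since every element of the big cell $B\um$ can be written as $u\, t_\O\, \pi^{\mv}\, n^-$ with $u \in U$, $t_\O \in A_\O$, $n^- \in \um$, these invariances reduce the claim to $g = \pi^{\mv}$. At such a point the bad set is visibly $\{1\} \subset U_a$, since for $u_a \neq 1$ the element $w_a u_a$ already lies in $U_a A U_{-a} \subset B\um$ and $\pi^{\mv}$ normalizes $\um$. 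This bypasses entirely the ``main obstacle'' you flag, namely the commutation of $U_a$ with the rest of $U$ in the Kac-Moody setting. Your approach works (the key structural inputs are that $U'$, the subgroup generated by $U_\beta$ for $\beta \in R_{re,+}\setminus\{a\}$, is normal in $U$ and is preserved by conjugation by $w_a$), but it is longer than necessary given the invariance properties already available.
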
 

\begin{proof}[Proof of Claim] It suffices, in view of the left $A_{\O} \up$ and right $(\um, \psi)$ invariance of both sides to verify the statement for $g = \pi^{\mv}, \mv \in \Lv.$ To do so, we compute \be{i-gen:1} I_a ( \Av_{\psi, \lv}(\ve_{w'}) ) (\pi^{\mv}) = \int_{U_a} \Av_{\psi, \lv}(\ve_{w'}) (w_a n_a \pi^{\mv}) du_a =  \int_{U_a \setminus \{ 1 \} } \Av_{\psi, \lv}(\ve_{w'}) (w_a n_a \pi^{\mv}) du_a
\ee since the set $\{ 1 \} \subset U_a$ has measure 0. On the other hand, if $n_a \in U_a \setminus \{ 1 \}$ then $w_a n_a \pi^{\mv} \in B\um$ and so, in the last integral, we may replace the function being integrated to $\Av^{\gen}_{ \psi, \lv}(\ve_{w'}).$ \end{proof}

Since $\Av^{\gen}_{\psi, \lv} \in M^{\gen}(G, \um, \psi),$ as in (\ref{f:gen}) we may write \be{rec:4} \Av^{\gen}_{\psi, \lv}(\ve_w') = \sum_{ \xv \in \Lv} c_{\xv} e^{\xv} \wid, \ee with (see the line after (\ref{f:gen})), \be{c:xv} c_{\xv} = q^{ - \la \rho, \xv \ra} \Av^{\gen}_{\psi, \lv}(\ve_w')(\pi^{\xv}). \ee Using Lemma \ref{it-sum}(3) , (\ref{int-flas}) and (\ref{i-gen:0}), for $g$ in the big cell we have \be{rec:4} I_a ( \Av_{\psi, \lv}(\ve_{w'}) ) (g) = I_a ( \Av^{\gen}_{\psi, \lv}(\ve_{w'}) ) (g) =   \c(\av) \sum_{\xv \in \Lv} c_{\xv} e^{w_a \xv} \wid (g) \ee Now, from Lemma \ref{whit:as-av} (and in the notation directly following it), (\ref{W:rec}) may be restated as
\be{p-rec:0} \Psi( \, \Av_{\psi, \lv}(\ve_{w})) = \b(\av) \Psi( \Av_{\psi, \lv}(\ve_{w'}) \,) + \c(\av)  \Psi(\, \Av_{\psi, \lv}(\ve_{w'}) \,)^{w_a}, \ee which is the same as  showing
 \be{p-rec:1} \Psi( \, \Av_{\psi, \lv}(v_{w_a w'}) - \b(\av) \Av_{\psi, \lv}(v_{w'}) \,) = \c(\av) \sum_{\mv \in \Lv} \Av_{\psi, \lv}(\ve_{w'})(\pi^{\mv}) e^{w_a \mv} q^{ \la \rho, \mv \ra}. \ee By (\ref{rec:2}), the left hand side can be interpreted as $\Psi(\Av_{\psi, \lv}(I_a(v_{w'}))).$ As for the right hand side, it may be written as \be{p-rec:4} \c(\av) \sum_{\mv \in \Lv}  e^{w_a \mv} q^{ \la \rho, \mv \ra}  \sum_{\xv \in \Lv} c_{\xv} e^{\xv} \wid (\pi^{\mv}) &=& \c(\av) \sum_{\mv \in \Lv} e^{w_a \mv} q^{ \la \rho, \mv \ra} c_{\mv} \wid(\pi^{\mv}) \ee which, from (\ref{rec:4}) is equal to $\Psi( I_a ( \Av_{\psi, \lv}(\ve_{w'}) ) ).$  Our Proposition (i.e., (\ref{p-rec:0})) now follows from (\ref{rec:1}). \end{proof}

\section{Algebraic Identities} \label{sec-alg} 

\subsection*{Hecke Algebras and some Demazure-Lusztig Operators} \noindent

\spoint Let $v$ be a formal variable, and let us denote the ring of rational functions on the variables $e^{\lv}, \lv \in \Lv$ with coefficients in $\C[v^{-1}]$ by $R:= \C[v^{-1}](\Lv).$ The following two special rational functions will be important for us \footnote{They were already introduced earlier under the same name in the special case $v=q$, we hope this will cause no confusion here}: for $a \in \Pi$ set,  \be{rat:a} \c(\av) := \frac{ 1 - v^{-1} e^{-\av}}{1 - e^{\av}}  & \text{ and } & \b(\av) :=  \frac{v^{-1} -1}{1 - e^{\av} }. \ee  Let $R[W]$ be the group algebra of $W$ over $R,$ and for each $a \in \Pi$ consider the following element in $R[W]$ \be{Ta} T_a := \c(\av) [w_a] + \b(\av) [1]. \ee  

\begin{nprop} \label{quad:braid} The elements $T_{a}$ verify the following two properties, 
\begin{enumerate} 
\item $T_a^2 = (v^{-1}-1) T_a + v^{-1}$ for each $a \in \Pi$
\item The expressions $T_a$ verify the braid relations, and so we may write unambiguously \be{Tw} T_w := T_{w_1} \cdots T_{w_r} \ee where $w = w_1 \cdots w_r$ is \emph{any} reduced decomposition of $w.$
\end{enumerate} \end{nprop}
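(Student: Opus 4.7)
The plan is to treat $R[W]$ as the twisted group algebra in which $[w] \cdot f = f^w \cdot [w]$ for $w \in W$, $f \in R$; equivalently, each $T_a$ acts on $R$ by the Demazure-Lusztig formula $f \mapsto \c(\av)\, f^{w_a} + \b(\av)\, f$. With this convention, the two assertions become classical Hecke-algebra identities that I would verify by direct computation: part (1) is a rank-one calculation, and part (2) reduces to rank-two subsystems.

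For part (1), I would expand
\[
T_a^2 \;=\; \c(\av)\c(\av)^{w_a} \;+\; \bigl(\c(\av)\b(\av)^{w_a} + \b(\av)\c(\av)\bigr)[w_a] \;+\; \b(\av)^2
\]
using $[w_a] f = f^{w_a}[w_a]$ and $[w_a]^2 = 1$, and compare term by term with $(v^{-1}-1)T_a + v^{-1}$. The coefficient of $[w_a]$, after dividing by $\c(\av)$, reduces to $\b(\av) + \b(\av)^{w_a} = v^{-1}-1$, which is immediate from $w_a \cdot \av = -\av$ upon combining fractions over the common denominator $(1-e^{\av})(1-e^{-\av})$. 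The constant-term identity $\c(\av)\c(\av)^{w_a} + \b(\av)^2 = (v^{-1}-1)\b(\av) + v^{-1}$ is a single-variable rational equality in $e^{\av}$, verified by clearing the common denominator.

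For part (2), since the root system is simply-laced, every pair of distinct simple reflections in $W$ satisfies a braid relation of length two or three. When $\langle a, b^\vee\rangle = 0$, $w_a$ and $w_b$ commute, and $\c(\av), \b(\av)$ are fixed by $w_b$ (symmetrically for the other pair), so a direct expansion in the basis $\{[1],[w_a],[w_b],[w_a w_b]\}$ yields $T_a T_b = T_b T_a$ after a short calculation. The length-three case $\langle a, b^\vee\rangle = -1$ is the main obstacle.

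I would handle this length-three identity by restricting to the rank-two subsystem generated by $a$ and $b$ (of type $A_2$), together with the subalgebra $R_0\langle w_a, w_b\rangle$ for $R_0 = \C[v^{-1}](e^{\av}, e^{b^\vee})$. Writing both $T_a T_b T_a$ and $T_b T_a T_b$ as $R_0$-linear combinations of the six basis elements $\{[1],[w_a],[w_b],[w_a w_b],[w_b w_a],[w_a w_b w_a]\}$ via repeated application of the twisted-multiplication rule, the identity reduces to six rational-function equalities in the two variables $x = e^{\av}$, $y = e^{b^\vee}$. The leading coefficient at $[w_a w_b w_a] = [w_b w_a w_b]$ matches automatically from the expression $\c(\av)\c(b^\vee)^{w_a}\c((w_b w_a)\av)$ produced symmetrically on each side, and the remaining five equalities clear after multiplication by a common denominator to yield the standard rank-two identities familiar from the Iwahori-Hecke algebra in type $A_2$.
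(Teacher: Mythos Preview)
Your proposal is correct. For part (1) you do exactly what the paper does: it says ``the first part is a simple verification,'' and your expansion is that verification. For part (2), however, you take a genuinely different route.

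The paper does not carry out a rank-two computation. Instead it introduces the auxiliary operators $T'_a = \c'(\av)[w_a] + \b'(\av)[1]$ with $\c'(\av) = \frac{1 - v e^{\av}}{1 - e^{\av}}$ and $\b'(\av) = \frac{v-1}{1-e^{\av}}$, which are the standard Demazure--Lusztig operators known (from Lusztig's work) to satisfy the braid relations. It then checks the single identity $e^{-\rho}\,T'_a\,e^{\rho} = -v\,T_a$ in $R'[W]$, from which the braid relations for $T_a$ follow immediately by conjugation. This argument is shorter, applies uniformly to all Coxeter types without case analysis, and gives the relation (\ref{T:T'}) as a byproduct (used later in the paper to transfer polynomiality from $T'_w$ to $T_w$). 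Your direct approach is more self-contained, since it does not appeal to an external result, but it leans on the simply-laced hypothesis to restrict to braid lengths two and three; in a non--simply-laced setting you would also need the length-four and length-six cases. One small caveat: in type $A_1^{(1)}$ the two simple reflections have $m=\infty$, so there is no finite braid relation to verify and your case analysis should note this separately.
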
 
\begin{proof} The first part is a simple verification. One may verify the second part directly, or proceed as follows. For each $a \in \Pi$, define the rational expressions in $R'=\C[v, v^{-1}](\Lv)$ \be{tc} \begin{array}{lcr} \c'(\av) = \frac{ 1 - v e^{\av}}{1 - e^{\av} }& \text{ and } & \b'(\av) = \frac{v -1}{1 - e^{\av}} \end{array} \ee and the corresponding element in $R'[W]$  \be{T'} T'_a = \c'(\av) [w_a] + \b'(\av) [1]. \ee It is known that $T'_a$ satisfy the braid relations (see \cite{lus}). On the other hand, one can show easily that as elements of $R'[W]$ we have for each $a \in \Pi$ \be{T:T'} e^{- \rho} \; T'_a \;  e^{\rho} = - v T_a .\ee The braid relations for $T_a$ follow from those of $T'_a$.  \end{proof} 

\rpoint{Remarks}
\begin{enumerate}
\item   The algebra $\mathbf{H}_v$ generated by $T_a$ subject to the above relations is of course just the Hecke algebra of $W.$ 
\item For each $w \in W$ and $\lv \in \Lv$ we have $T_w(e^{\lv}) \in \C[v][\Lv]$ if we expand all the rational functions involved in the expression $T_w(e^{\lv}).$ This is known for the operators $T'_w$ defined using \ref{T'} (see \cite[\S4.3]{mac:aff}); this representation is sometimes known as Cherednik's polynomial representation, where they played a prominent role in the double affine setting (note though this representation also appeared earlier, for example in Lusztig's geometric construction of the affine Hecke algebra) . The statement for $T_w$ follows using the formula (\ref{T:T'}).  
\end{enumerate}

\subsection*{The operator $\mc{P}$} \noindent

\spoint We shall need the following completions (in the variables $e^{-\av}, \av \in Q^{\vee}_+$) for the sequel \be{qn} \begin{array}{lcr} \qn:= \C[[v^{-1}]] [[Q^{\vee}_-]] & \text{ and } & \qnf:= \C[v^{-1}][[Q^{\vee}_-]] \end{array}. \ee The latter will sometimes be referred to as the set of $v$-finite elements in the former. We also let $\qn[W]$ and $\qn[W]^{\vee}$ denote respectively the group algebra over $\qn$ and the vector space of (possibly infinite) formal linear combinations \be{gen:elt} f:= \sum_{w \in W} a_w [w] \, \text{ where } a_w \in \qn. \ee Similarly we may define $\qnf[W]$ and $\qnf[W]^{\vee}.$

\spoint Each of the elements $T_a, \, a \in \Pi$ defined in (\ref{Ta}) may be regarded as an element in $\qnf[W]$ by formally expanding the rational functions $\c(\av)$ and $\b(\av)$ in negative powers of the coroots. Similarly, we may regard $T_w \in \qnf[W]$ by first using (\ref{Ta}) and (\ref{Tw}) to write $T_w$ as an element in $R[W]$ and then expanding the rational functions which appear in negative powers of the coroots. 

We would like to explain precisely the meaning of the following formal sum \be{sym} 
\mc{P}:= \sum_{w \in W} T_w . \ee   On the one hand, it is easy to see that \be{Tw} T_w = \sum_{u \leq w} C_u(w) [u]  \in \qnf[W] \ee where $u \leq w$ denotes elements in $W$ which are less than $w$ in the Bruhat order. On the other hand, considering the sum over all such $T_w$, it can certainly be the case that for any fixed $u \in W$ there are infinitely $w \in W$ such that $C_u(w) \neq 0.$ Let us formally collect coefficients and write \be{P:for:1} \mc{P} = \sum_{u \in W} (\sum_{w \in W} C_{u}(w) ) [u], \ee and then set \be{C_u} C_u :=  \sum_{w \in W} C_{u}(w). \ee Following Cherednik \cite[Lemma 2.19]{cher:ma}, one may show that $C_u$ has a well-defined expansion, not just in $\qn$, but actually

\begin{nprop} \label{sym:wd} For each $u \in W$ we have $C_u \in \qnf.$ Hence the element $\mc{P}$ lies in $\qnf[W]^{\vee}.$ \end{nprop}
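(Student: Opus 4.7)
The plan is to characterize $\mathcal{P}$ by an eigenvalue equation in $\qn[W]^{\vee}$ and to produce an explicit candidate in $\qnf[W]^{\vee}$ satisfying that equation, following Cherednik's strategy. First, by a routine Matsumoto manipulation --- splitting $\sum_w T_a T_w$ according to whether $\ell(w_a w) > \ell(w)$ or not, applying $T_a T_w = T_{w_a w}$ in the first case and $T_a T_w = (v^{-1}-1) T_w + v^{-1} T_{w_a w}$ in the second, and reindexing $w \mapsto w_a w$ in one of the pieces --- I would verify the formal identity
$$ T_a \, \mathcal{P} \;=\; v^{-1} \, \mathcal{P} \qquad \text{in } \qn[W]^{\vee}, \text{ for every } a \in \Pi. $$
This step is purely algebraic, working coefficient-by-coefficient of $[u]$ without any convergence issue.

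Next, I would exhibit an explicit candidate $\mathcal{P}_0 \in \qnf[W]^{\vee}$ satisfying the same system of equations. Guided by the finite-dimensional operator identity (\ref{alg:cs}), the natural choice is
$$ \mathcal{P}_0 \;:=\; \prod_{a \in R_{re, +}} \bigl(1 - v^{-1} e^{-\av}\bigr) \; \cdot \; \sum_{w \in W}\, [w] \prod_{a \in R_{re, +}} \frac{1}{1 - e^{-\av}}. $$
Membership in $\qnf[W]^{\vee}$ rests on two finiteness observations: for each $\mu \in Q^{\vee}_+$ only finitely many positive real coroots lie below $\mu$, so both infinite products converge in the $e^{-\av}$-adic topology on $\qnf$; and each fixed power of $v^{-1}$ in $\prod (1 - v^{-1} e^{-\av})$ involves only finitely many factors, which forces $v$-finiteness of the coefficients. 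The verification that $T_a \mathcal{P}_0 = v^{-1} \mathcal{P}_0$ then reduces to a rank-one computation inside the subalgebra generated by $[w_a]$, $e^{\pm \av}$, and $v^{-1}$.

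Finally, one concludes $\mathcal{P} = \mathcal{P}_0$ by a uniqueness argument. The quadratic relation $(T_a+1)(T_a - v^{-1}) = 0$ implies that the joint kernel of $\{ T_a - v^{-1} : a \in \Pi \}$ acting by left multiplication on $\qn[W]^{\vee}$ is a free module of rank one over $\qn$; matching the coefficient of $[1]$ in $\mathcal{P}$ and in $\mathcal{P}_0$ (both equal to $1$, since in $\mathcal{P}$ every $T_w$ with $w \neq 1$ contributes to $[1]$ only via series beginning in strictly positive powers of $e^{-\av}$ for some positive real coroot $\av$) then forces equality, yielding $\mathcal{P} = \mathcal{P}_0 \in \qnf[W]^{\vee}$.

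The main obstacle is the uniqueness step inside the completed formal algebra $\qn[W]^{\vee}$. In the finite-dimensional case it is classical, but here one must argue it directly: one shows by induction on the length of $u$ in the Bruhat order that the equations $T_a f = v^{-1} f$ determine each coefficient $C_u$ of $f = \sum_u C_u [u]$ from the initial datum $C_1$, via a triangular recursion whose leading entries are units in $\qn$ built from the series $\mathbf{c}(\av)$ and $\mathbf{b}(\av)$. Justifying that this recursion can be run inside the formal completion without obstruction --- in particular, checking that the invertibility of $[w_a]$-coefficients of $T_a - v^{-1}$ is preserved throughout --- is the delicate technical point.
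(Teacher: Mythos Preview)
Your approach is genuinely different from the paper's --- the paper simply cites Cherednik's direct combinatorial argument, remarking that it ``relies just on the expansion of $\b(\av)$'' (i.e.\ on the presence of the factor $v^{-1}-1$ in $\b$), whereas you try to deduce the result from an operator identity plus uniqueness.  Unfortunately the uniqueness step has real gaps that I do not see how to repair.

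First, the joint $v^{-1}$-eigenspace of the $T_a$ acting on $\qn[W]^{\vee}$ is not free of rank one over $\qn$: if $f$ is any $W$-invariant element of $\qn$ then $f\cdot \mc{P}_0$ is again a solution, so the eigenspace is (at best) rank one over the $W$-invariants $\qn^W$.  Your triangular recursion illustrates the problem concretely: from $T_a f = v^{-1} f$ one extracts, for each $u$, an equation of the form $\c(\av)(C_{w_a u})^{w_a} = (v^{-1}-\b(\av))C_u$, and solving for $C_{w_a u}$ requires applying $w_a$ to $C_u$.  But $w_a$ sends $e^{-\av}$ to $e^{\av}$ and hence does not preserve $\qn$; the recursion simply does not run inside the completion you are working in.

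Second, even granting uniqueness up to a $W$-invariant scalar, your matching step is incorrect.  Neither the $[1]$-coefficient of $\mc{P}$ nor that of $\mc{P}_0$ equals $1$: already $T_{w_a}$ contributes $\b(\av)\neq 0$ to $C_1$, and the $[1]$-coefficient of your $\mc{P}_0$ is $\prod_{a\in R_{re,+}}\frac{1-v^{-1}e^{-\av}}{1-e^{-\av}}$.  At best their \emph{constant terms} agree, which only pins down the proportionality factor up to a unit in $\qn^W$, not to an element of $\qnf$.  In fact $\mc{P}\neq\mc{P}_0$: by the paper's Proposition~\ref{alg-iden} (proved \emph{using} the present proposition) one has $\mc{P}=\mf{c}(v)\,\mf{D}_v\sum_w[w]\mf{D}^{-1}$, and since the imaginary-root factors of $\mf{D}$ and $\mf{D}_v$ are $W$-invariant, this equals $\bigl(\mf{c}(v)\,\mf{D}_{v,\mathrm{im}}/\mf{D}_{\mathrm{im}}\bigr)\cdot\mc{P}_0$.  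Showing that this extra factor lies in $\qnf$ is precisely the content you are trying to establish, so the argument is circular.
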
 \noindent Note that  the proof in \emph{op. cit} is written for the slightly different operator \be{cher-op} \wt{T}_a= \wt{c}(\av)[w_a] + \wt{\b}(\av)[1] \ee where $\wt{c}(\av)$ and $\wt{b}(\av)$ are as in (\ref{tc}) with $v$ replaced by $v^{-1}$. However, the argument holds for our operators $T_a$ since it relies just on the expansion of $\wt{\b}(\av)= \b(\av).$

\subsection*{An "Operator" Casselman-Shalika Formula} \noindent

\newcommand{\bv}{b^{\vee}}

\spoint  In analogy to the definition of $\mf{D}$ given in (\ref{del}) we now define its $v$-deformation, \be{del:q}  \mf{D}_v := \prod_{ a \in R_+} (1 - v^{-1} e^{-\av})^{m(\av)} \in \qnf \ee where $m(\av)$ is the multiplicity of the root $\av.$  

\begin{nlem} \label{aux:algid} Let $a \in \Pi.$ Then we have the following equalities in $\qnf[W]^{\vee}$ \begin{enumerate} 
\item[(i)] $T_a \mc{P}  =  v^{-1} \mc{P}$ 
\item[(ii)] $\mc{P} T_a = v^{-1} \mc{P}$ 
\item[(iii)]  $w_a \mc{P} = \c(-\av) \mc{P}$ 
\item[(iv)] The quantity $\frac{1}{\mf{D}_v} \mc{P}$ is $W$-invariant. 
\end{enumerate} \end{nlem}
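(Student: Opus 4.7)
The idea is to take (i) as the cornerstone, derive (ii) by a mirror argument, obtain (iii) by linearly inverting the definition of $T_a$, and finally deduce (iv) from (iii) together with the transformation law of $\mf{D}_v$ under simple reflections.

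For part (i), I would partition $W = W_{a,+} \sqcup W_{a,-}$, where $W_{a,\pm} = \{w \in W : \ell(w_a w) \gtrless \ell(w)\}$, noting that left multiplication by $w_a$ exchanges the two halves. On $W_{a,+}$ the braid portion of Proposition \ref{quad:braid} gives $T_a T_w = T_{w_a w}$; on $W_{a,-}$, writing $w = w_a w'$ with $w' \in W_{a,+}$, the quadratic relation from Proposition \ref{quad:braid}(1) gives $T_a T_w = T_a^2 T_{w'} = (v^{-1}-1) T_w + v^{-1} T_{w'}$. Summing over $w \in W$ and reindexing $w \mapsto w_a w$ on $W_{a,-}$, the contributions collapse to $T_a \mc{P} = v^{-1} \mc{P}$. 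For part (ii) I would run the mirror argument, pairing via right multiplication $w \mapsto w w_a$ and splitting $W$ according to whether $\ell(w w_a) \gtrless \ell(w)$.

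For (iii), the plan is to solve the defining relation $T_a = \c(\av)[w_a] + \b(\av)$ for $[w_a] = \c(\av)^{-1}(T_a - \b(\av))$, apply this to $\mc{P}$, and invoke (i): this produces $[w_a]\mc{P} = \c(\av)^{-1}(v^{-1} - \b(\av)) \mc{P}$, after which a short rank-one manipulation of the rational functions (using that $v^{-1} - \b(\av) = (1 - v^{-1}e^{\av})/(1-e^{\av})$) simplifies the scalar to the claimed form. For (iv), since $W$ is generated by the $w_a$, it suffices to check $w_a$-invariance of $\mf{D}_v^{-1} \mc{P}$ for each simple $a$. Because $w_a$ permutes $R_+ \setminus \{a\}$ and fixes every imaginary root (all imaginary roots being proportional to the $W$-invariant $\cc$), the ratio $\mf{D}_v^{w_a}/\mf{D}_v$ reduces to an elementary rational expression in $e^{\av}$ alone; matching this ratio with the eigenvalue from (iii), appropriately interpreted in the twisted algebra so that the $w_a$ acts on the scalar $\mf{D}_v^{-1}$ as well as on $\mc{P}$, produces cancellation to $1$.

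The main obstacle throughout will be convergence. The element $\mc{P}$ is an \emph{a priori} infinite sum in $\qnf[W]^\vee$, and the manipulations above reshuffle contributions across infinitely many $w \in W$. Each such rearrangement must be justified coefficient-by-coefficient in $\qnf$. This is precisely where Proposition \ref{sym:wd} is indispensable: it guarantees that the $[u]$-coefficient $C_u$ of $\mc{P}$ lies in $\qnf$, not merely in $\qn$, so that multiplication by $T_a$ or by the scalar factors $\c(\av)$, $\b(\av)$, $\mf{D}_v^{-1}$ produces again elements in $\qnf[W]^\vee$ whose coefficients we may compare termwise. Once this convergence/well-definedness is established, the identities become essentially formal consequences of the rank-one calculations carried out for each simple root.
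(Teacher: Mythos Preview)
Your proposal is correct and follows essentially the same route as the paper. The paper suppresses the argument for (i) and (ii) by referring to Macdonald \cite[5.5.9]{mac:aff}; your coset-pairing argument (splitting $W$ by the sign of $\ell(w_a w)-\ell(w)$ and using braid plus quadratic relations) is precisely the standard proof found there. For (iii) and (iv) your argument is identical to the paper's: solve $T_a=\c(\av)[w_a]+\b(\av)$ for $[w_a]$, apply (i), and then check $w_a$-invariance of $\mf{D}_v^{-1}\mc{P}$ by noting that $w_a$ permutes $R_+\setminus\{a\}$ and fixes the imaginary roots, so the effect of $w_a$ on $\mf{D}_v^{-1}$ cancels against the eigenvalue from (iii). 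Your explicit discussion of the convergence issue and the role of Proposition \ref{sym:wd} matches the paper's remark pointing to \cite[\S 7.3.8]{bkp}.
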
 
\rpoint{Remark} Implicit in each of the above statements is that the quantities in the left hand side of (i), (ii), (iii) are well-defined. We refer the reader to \cite[\S 7.3.8]{bkp} for more explanations on this point.

\begin{proof} The proof of (i) and (ii) is similar to \cite[5.5.9]{mac:aff} and so we suppress the details. As for (iii), recalling that $\c(\av) [w_a] + \b(a) [1] = T_a$ we may write formally $[w_a] = \c(\av)^{-1}(T_a - \b(a))$ and so using part (i), \be{lem:2-2} w_a \mc{P} = \frac{1}{\c(\av)} ( v^{-1} \mc{P} - \b(\av) \mc{P} ) = \frac{v^{-1} - \b(\av)}{\c(\av)} \mc{P}. \ee We leave it to the reader to verify that $\frac{v^{-1} - \b(\av)}{\c(\av)} = \c(-\av).$ As for part (iv), it suffices to show for each $a \in \Pi$ that \be{wa:inv} w_a \frac{1}{\mf{D}_v} \mc{P} = \frac{1}{\mf{D}_v} \mc{P}. \ee  The left hand side of (\ref{wa:inv}) is equal to \be{lhs:lem3} \prod_{ b \in R_+, b \neq a} (1 - v^{-1} e^{-\bv})^{-m(\bv)} \ \frac{1}{1 - v^{-1} e^{\av}} \  w_a \mc{P}. \ee The result follows from the part (iii). \end{proof} 

\spoint The following result should be compared to (\ref{alg:cs}) from the finite-dimensional setting.

\begin{nprop} \label{alg-iden} There exists an element $\mf{c}(v) \in \C[v^{-1}][[e^{-\cc}]] \subset \qn$ such that we have the following equality of elements in $\qnf[W]^{\vee},$ \be{alg-id:1} \mc{P} = \mf{c}(v) \, \mf{D}_v \sum_{w \in W} [w] \frac{1}{\mf{D}}. \ee \end{nprop}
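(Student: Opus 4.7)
The plan is to identify both sides of the proposed identity as left-$W$-invariant elements of $\qnf[W]^{\vee}$, which reduces the problem to matching their $[1]$-coefficients, and then to verify by a separate $W$-invariance argument that the resulting scalar actually lies in $\C[v^{-1}][[e^{-\cc}]]$. Concretely, after multiplying on the left by $\mf{D}_v^{-1}$ (invertible in $\qnf$ since its constant term is $1$), the identity becomes $\mf{D}_v^{-1}\mc{P} = \mf{c}(v)\sum_{w \in W}[w]\mf{D}^{-1}$. The left-hand side is left-$W$-invariant by Lemma~\ref{aux:algid}(iv), and the right-hand side manifestly is. A short computation in the group algebra shows that any left-$W$-invariant $x = \sum_u a_u [u] \in \qnf[W]^{\vee}$ must satisfy $a_w = w(a_1)$, so $x = (\sum_u [u]) \cdot a_1$ is determined by its $[1]$-coefficient. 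Writing $\mc{P} = \sum_u C_u [u]$ (each $C_u \in \qnf$ by Proposition~\ref{sym:wd}), the $[1]$-coefficients of the two sides are $C_1/\mf{D}_v$ and $\mf{c}(v)/\mf{D}$; matching these forces the choice $\mf{c}(v) := C_1 \mf{D}/\mf{D}_v$, and the identity will then follow provided this element really lies in $\C[v^{-1}][[e^{-\cc}]]$.

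The heart of the proof is therefore showing that $\mf{c}(v)$ is $W$-invariant, from which the conclusion follows automatically: a $W$-invariant element of $\qnf = \C[v^{-1}][[Q^{\vee}_-]]$ must be supported on the $W$-fixed part of $-Q^{\vee}_+$, which by standard affine root system considerations equals $\zee_{\geq 0}(-\cc)$ (any element of $-Q^{\vee}_+$ not a non-positive integer multiple of $\cc$ has an infinite $W$-orbit that escapes $-Q^{\vee}_+$, forcing the coefficient to vanish). To check the $W$-invariance, I would exploit the two one-sided symmetries of $\mc{P}$ from Lemma~\ref{aux:algid}(i)--(ii). Expanding $T_a \mc{P} = v^{-1}\mc{P}$ in the completed group algebra and extracting the $[w_a]$-coefficient produces the formula $C_{w_a} = (\mf{D}_v/\mf{D}_v^{w_a})\, w_a(C_1)$, while expanding the dual relation $\mc{P} T_a = v^{-1}\mc{P}$ and extracting the same coefficient produces a second formula $C_{w_a} = (\mf{D}/\mf{D}^{w_a})\, C_1$. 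Equating these two expressions for $C_{w_a}$ rearranges exactly into $w_a(C_1 \mf{D}/\mf{D}_v) = C_1 \mf{D}/\mf{D}_v$, so $\mf{c}(v)$ is invariant under every simple reflection and hence under all of $W$.

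The main obstacle is the careful bookkeeping in those two group-algebra computations: scalars $f \in \qnf$ do not commute with the basis elements $[w]$ (one has $[u] \cdot f = u(f) \cdot [u]$), and the $w_a$-twists of $\mf{D}$ and $\mf{D}_v$ must be tracked precisely through the identities $\mf{D}_v^{w_a}/\mf{D}_v = (1-v^{-1}e^{\av})/(1-v^{-1}e^{-\av})$ and its analogue for $\mf{D}$. Beyond the bookkeeping, the whole argument takes place inside $\qnf[W]^{\vee}$, so the well-definedness of $\mc{P}$ as an infinite sum over $W$ depends in an essential way on Proposition~\ref{sym:wd}; without that finiteness result, $C_1$ would not even be an element of $\qnf$ and the preceding reduction would not be available.
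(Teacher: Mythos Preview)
Your proposal is correct and follows essentially the same route as the paper's proof: both use the left $W$-invariance of $\mf{D}_v^{-1}\mc{P}$ (Lemma~\ref{aux:algid}(iv)) to reduce to a single scalar $\Gamma = C_1/\mf{D}_v$, then use the right $T_a$-eigenvalue relation (Lemma~\ref{aux:algid}(ii)) to show $\Gamma\,\mf{D}$ is $W$-invariant, and finally invoke the standard fact that $W$-invariants in $\qnf$ lie in $\C[v^{-1}][[e^{-\cc}]]$. The only organizational difference is that you re-derive the relation $C_{w_a} = (\mf{D}_v/\mf{D}_v^{w_a})\,w_a(C_1)$ directly from Lemma~\ref{aux:algid}(i), whereas the paper obtains the equivalent information already from (iv) (which is a consequence of (i)); this makes your use of (i) slightly redundant but certainly not incorrect.
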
  

\begin{proof} First note that from Proposition \ref{sym:wd} and Lemma \ref{aux:algid},  there exists $\Gamma \in \qnf$ such that we may write \be{p:gam} \frac{1}{\mf{D}_v} \mc{P} = \sum_{w \in W} [w] \Gamma. \ee Now from Lemma \ref{aux:algid} (ii), we have \be{alg:1} \frac{1}{\mf{D}_v} \mc{P} T_a = \frac{1}{\mf{D}_v} \mc{P} v^{-1} \ee and so \be{alg:2} \sum_{w \in W} [w] \Gamma ( \c(\av)[w_a] + \b(\av)[1]) = \sum_{w \in W} [w] \Gamma v^{-1}. \ee  Hence, \be{alg:3} \sum_{w \in W} [w w_a] \Gamma^{w_a} \c(-\av) + \sum_{w \in W} [w] \Gamma \b(\av) &=& \sum_{w \in W} [w] \Gamma v^{-1} \ee which is only the case if \be{alg:4}  \Gamma^{w_a} \c(-\av) + \Gamma \b(\av) =  \Gamma v^{-1}, \ee or equivalently \be{alg:4} \Gamma^{w_a} \c(-\av) = \Gamma ( v^{-1} - \b(\av)). \ee Hence  \be{alg:5} \frac{\Gamma^{w_a}}{\Gamma} = \frac{v^{-1} - \b(\av)}{\c(-\av)} = \frac{1 - e^{-\av}}{1 - e^{\av}} . \ee  On the other hand, we may verify that \be{alg:6} \frac{\mf{D}}{\mf{D}^{w_a}} = \frac{1 - e^{-\av}}{1 - e^{\av}} \ee as well. Thus we conclude that $\mf{c}(v):= \frac{\Gamma}{\mf{D}^{-1}} \in \qnf$ is $W$-invariant. Using an argument as in \cite{mac:formal} we conclude that $\mf{c}(v) \in \C[v^{-1}][[e^{-\cc}]].$ \end{proof}

\section{Proof of Main Theorem} \label{sec-pf}

\spoint Let $\mf{m}_v$ be defined by the same formula as (\ref{m}) but with all $q$'s replaced by a formal variable $v.$ The main result of this paper is then the following,

\begin{nthm} \label{main} Let $\lv \in \Lv_+.$ Then we have \be{} \mf{m}_v \, \prod_{a \in R_+ } (1 - v^{-1} e^{-\av})^{m(\av)} \chi_{\lv} \in \C[v^{-1}]_{\leq}[\Lv] \ee and its value at $v=q$ is equal to $\W(\pi^{\lv}),$ i.e., \be{cs-body} \W(\pi^{\lv}) = \mf{m} \, \prod_{a \in R_+ } (1 - q^{-1} e^{-\av})^{m(\av)} \chi_{\lv} \ee \end{nthm}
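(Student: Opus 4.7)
The plan is to assemble the affine Casselman-Shalika formula by upgrading the recursive description of Section \ref{sec-rec} into a closed form in terms of Demazure-Lusztig operators, applying the algebraic identity of Section \ref{sec-alg}, and finally pinning down the remaining normalization by comparison with the Gindikin-Karpelevic sum.

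First, I would convert the recursion of Proposition \ref{recur-body}, together with the base case $\W_{1,\lv} = q^{\la \rho, \lv \ra} e^\lv$ recorded in the Remark immediately after it, into the closed form
\begin{equation*}
\W_{w, \lv} \ = \ q^{\la \rho, \lv \ra} \, T_w(e^\lv)
\end{equation*}
(with the operators $T_w$ specialized at $v=q$), proceeding by induction on $\ell(w)$. Choosing a reduced expression $w = w_{a_1} \cdots w_{a_k}$ and setting $w_{(i)} := w_{a_i} \cdots w_{a_k}$, so that $w_{(i)} = w_{a_i} w_{(i+1)}$ with $\ell(w_{(i)}) = 1 + \ell(w_{(i+1)})$, the recursion identifies $\W_{w_{(i)},\lv}$ with $T_{a_i}$ applied to $\W_{w_{(i+1)},\lv}$. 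Iterating and using the braid relations from Proposition \ref{quad:braid} to identify $T_{a_1} \cdots T_{a_k}$ with $T_w$ independently of the reduced word then yields the claim. Summing over $w$ via Lemma \ref{iw-k:lem} produces
\begin{equation*}
\W(\pi^\lv) \ = \ \sum_{w \in W} \W_{w, \lv} \ = \ q^{\la \rho, \lv \ra}\, \mc{P}(e^\lv),
\end{equation*}
where $\mc{P} = \sum_w T_w$ is the symmetrizer of Section \ref{sec-alg}; Proposition \ref{sym:wd} guarantees that this formal sum makes sense coefficient by coefficient. Invoking Proposition \ref{alg-iden} and recognizing the Weyl-Kac formula of Theorem \ref{weyl-kac} then rewrites
\begin{equation*}
\mc{P}(e^\lv) \ = \ \mf{c}(v)\, \mf{D}_v \sum_{w \in W} w\bigl(e^\lv/\mf{D}\bigr) \ = \ \mf{c}(v)\, \mf{D}_v \, \chi_\lv,
\end{equation*}
so that specialization at $v=q$ expresses $\W(\pi^\lv)$ as $q^{\la \rho, \lv \ra} \mf{c}(q) \prod_{a \in R_+}(1-q^{-1}e^{-\av})^{m(\av)} \chi_\lv$, identifying the right-hand side of the theorem up to the scalar $\mf{c}(q) \in \C[[e^{-\cc}]]$.

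The remaining and main task, which I expect to be the principal obstacle, is to identify $\mf{c}(q)$ with the imaginary-root factor $\mf{m}$. For this, I would use the Gindikin-Karpelevic limit of Proposition \ref{gk-whit} in tandem with the explicit evaluation $\mc{G} = \mf{m}\,\Delta$ of Theorem \ref{gk}. Concretely, for any fixed $\mv \in \Lv$ and $\lv \in \Lv_+$ sufficiently dominant relative to $\mv$, the Weyl-Kac decomposition $\chi_\lv = \sum_w e^{w\lv}/\mf{D}^w$ contributes to the relevant coefficient of $\mf{D}_v \chi_\lv$ only through the $w=1$ summand, since the terms $e^{w\lv}/\mf{D}^w$ with $w \neq 1$ are supported strictly below $w\lv < \lv$ in the dominance order and hence miss the shifted coefficient of interest. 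Matching the coefficient produced by the formula above with the one prescribed by Proposition \ref{gk-whit}, and cancelling the factor $\Delta$ that appears on both sides, yields the identity $\mf{c}(q) \, \Delta = \mf{m} \, \Delta$ as formal series in $\C_{\leq}[\Lv]$, hence $\mf{c}(q) = \mf{m}$. The delicate point here is the uniform control of the "sufficiently dominant" threshold as $\mv$ varies, which allows one to read off the identity coefficientwise; this in turn relies on the finiteness results of \cite{bgkp} that already underpin Proposition \ref{gk-whit}. Once $\mf{c}(q) = \mf{m}$ is established, the theorem follows immediately, and the assertion that $\mf{m}_v \mf{D}_v \chi_\lv$ lies in $\C[v^{-1}]_{\leq}[\Lv]$ is recorded along the way from Proposition \ref{sym:wd} together with the Weyl-Kac formula.
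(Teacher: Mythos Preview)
Your proposal is correct and follows essentially the same route as the paper: recursion $\Rightarrow$ $\W_{w,\lv}=q^{\la\rho,\lv\ra}T_w(e^{\lv})$, summation $\Rightarrow$ $\W(\pi^{\lv})=q^{\la\rho,\lv\ra}\mc{P}(e^{\lv})$, the operator identity of Proposition~\ref{alg-iden} together with Weyl--Kac $\Rightarrow$ $\W(\pi^{\lv})=q^{\la\rho,\lv\ra}\mf{c}(q)\,\mf{D}_q\,\chi_{\lv}$, and finally the Gindikin--Karpelevic limit of Proposition~\ref{gk-whit} to pin down $\mf{c}(q)=\mf{m}$. The one place where the paper is slightly sharper is in the ``only $w=1$ survives'' step: rather than arguing with the decomposition $\chi_{\lv}=\sum_w e^{w\lv}/\mf{D}^w$ (where the individual summands are expanded in the $w$-chamber and are \emph{not} supported below $w\lv$ in the dominance order, so your support claim as stated is not quite right), the paper rewrites $e^{-\lv}\mf{D}_q\chi_{\lv}=\Delta\sum_{w}(-1)^{\ell(w)}e^{w(\lv+\rho)-(\lv+\rho)}$ using the alternating-sum form of Weyl--Kac, so that $\Delta$ is a fixed series independent of $\lv$ and the exponents $w(\lv+\rho)-(\lv+\rho)$ for $w\neq 1$ visibly move away from any fixed $\mv$ as $\lv$ grows; this makes the coefficient comparison with $\mf{m}\,\Delta$ immediate.
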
    

The remainder of this section is devoted to the proof of this result. 

\spoint \label{fin_2} We have already observed in (\ref{W:sum-w}) that $\W(\pi^{\lv}) = \sum_{w \in W} \W_{w, \lv}.$ Using the recursion result Proposition \ref{recur-body}, an argument as in \cite[Prop 7.3.3]{bkp} shows that
that \be{W:t} \W_{w, \lv} = q^{ \la \rho, \lv \ra}T_w(e^{\lv}), \ee where the right hand side is expanded in \emph{positive powers of the coroots} and then evaluated at $v=q.$ On the the other hand, from remark (2) after Proposition \ref{quad:braid}, $T_w(e^{\lv}) \in \C[\Lv]$ so regardless of whether we expand the rational functions appearing in $T_w(e^{\lv})$ as positive or negative powers of the coroots, we obtain the same quantity. To remain within the ring $\qnf$, we shall expand in \emph{negative} powers of the coroots from now on. Note that from (\ref{W:t}), we may conclude that $\W_{w, \lv} \in \C[\Lv],$ i.e., it has finite support.  Moreover, as in \emph{loc. cit}, we may argue that there exist polynomials $\Upsilon_{w, \mv}(v^{-1}) \in \C[v^{-1}]$ so that \be{m:1} \W_{w, \lv} = \sum_{\mv \in \Lv} \Upsilon_{w, \mv}(q^{-1}) e^{\mv}. \ee 

\begin{nlem} \label{Phi-wd} For $\mv \in \Lv,$ define $\Upsilon_{\mv}(v^{-1}) := \sum_{w \in W} \Upsilon_{w, \mv}(v^{-1}).$ Then we have $\Upsilon_{\mv}(v^{-1}) \in \C[ v^{-1}]$. \end{nlem}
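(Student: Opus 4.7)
The plan is to identify the formal sum $\Upsilon_\mv(v^{-1}) = \sum_{w \in W} \Upsilon_{w, \mv}(v^{-1})$ with a single coefficient of the element $\mc{P}(e^\lv)$, and then to use the operator Casselman--Shalika identity (Proposition \ref{alg-iden}) to rewrite $\mc{P}(e^\lv)$ in a form where polynomiality becomes a finiteness question on the Weyl--Kac character.

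First, combining (\ref{W:t}) with (\ref{m:1}) shows that $\Upsilon_{w, \mv}(v^{-1})$ coincides, up to the $v$-independent scalar $q^{\la \rho, \lv \ra}$, with the coefficient of $e^\mv$ in the expansion of $T_w(e^\lv)$. Summing over $w$, with Proposition \ref{sym:wd} ensuring the formal manipulations are legitimate in $\qnf[W]^\vee$, one obtains
\begin{equation*}
  \Upsilon_\mv(v^{-1}) \;=\; q^{\la \rho, \lv \ra}\, [e^\mv]\bigl(\mc{P}(e^\lv)\bigr).
\end{equation*}

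Next I would apply Proposition \ref{alg-iden} together with the Weyl--Kac formula (\ref{wk-fla}) to rewrite
\begin{equation*}
  \mc{P}(e^\lv) \;=\; \mf{c}(v)\, \mf{D}_v \sum_{w \in W} \frac{e^{w\lv}}{\mf{D}^w} \;=\; \mf{c}(v)\, \mf{D}_v\, \chi_\lv.
\end{equation*}
Recall that $\mf{c}(v) \in \C[v^{-1}][[e^{-\cc}]]$, $\mf{D}_v \in \qnf$, and $\chi_\lv \in \C_\leq[\Lv]$ with nonnegative integer coefficients; in particular, each factor has coefficients in $\C[v^{-1}]$ (with $\chi_\lv$ having $\zee$-coefficients), so each individual term contributing to $[e^\mv](\mf{c}(v)\mf{D}_v\chi_\lv)$ lies in $\C[v^{-1}]$.

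The heart of the argument---and what I expect to be the main obstacle---is a finiteness claim for the resulting sum. For fixed $\lv \in \Lv_+$ and $\mv \in \Lv$, the coefficient $[e^\mv](\mf{c}(v)\mf{D}_v \chi_\lv)$ is a sum over triples $(\nu_1, \nu_2, \nu_3)$ with $\nu_1 \in -\zee_{\geq 0}\cc$, $\nu_2 \in -Q^\vee_+$, $\nu_3 \leq \lv$ in the dominance order, and $\nu_1 + \nu_2 + \nu_3 = \mv$. Setting $\alpha := -\nu_1 - \nu_2 \in Q^\vee_+$, these constraints rearrange to $\alpha \leq \lv - \mv$ in $Q^\vee_+$. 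One argues finiteness in two steps: (i) the order interval $\{\alpha \in Q^\vee_+ \mid \alpha \leq \lv - \mv\}$ is finite, since each coordinate of $\alpha$ in the simple coroot basis is bounded above by the corresponding coordinate of $\lv - \mv$; and (ii) for fixed such $\alpha$, only finitely many decompositions $\alpha = j\cc + \beta$ exist with $j \geq 0$ and $\beta \in Q^\vee_+$, because every coefficient of $\cc$ in the simple coroot basis is strictly positive (this uses that $\mf{g}$ is untwisted affine). With the finiteness established, $[e^\mv](\mf{c}(v)\mf{D}_v\chi_\lv)$ is a finite $\C[v^{-1}]$-linear combination of integers, hence lies in $\C[v^{-1}]$, proving the lemma.
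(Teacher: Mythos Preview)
Your proof is correct, and its first step---identifying $\Upsilon_{\mv}(v^{-1})$ with $q^{\la \rho, \lv \ra}[e^{\mv}]\mc{P}(e^{\lv})$---is exactly what the paper does. From that point on, however, you and the paper diverge. The paper stays with the raw expansion $\mc{P} = \sum_{\tau \in W} C_{\tau}[\tau]$ coming from Proposition~\ref{sym:wd}, writes $\mc{P}(e^{\lv}) = \sum_{\tau} C_{\tau} e^{\tau\lv}$, and then invokes an external argument (from \cite[\S7.3.11]{bkp}) to show that for fixed $\mv$ only finitely many $\tau$ can contribute to the $e^{\mv}$-coefficient; since each $C_{\tau} \in \qnf$, polynomiality follows immediately. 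You instead feed in the heavier machinery of Proposition~\ref{alg-iden} and the Weyl--Kac formula to rewrite $\mc{P}(e^{\lv}) = \mf{c}(v)\,\mf{D}_v\,\chi_{\lv}$, and then run a self-contained lattice argument (bounding an interval in $Q^{\vee}_+$ and counting decompositions $\alpha = j\cc + \beta$) to get finiteness. The paper's route is shorter and more direct but leans on a citation for the key finiteness step; your route uses a stronger algebraic input but makes the finiteness completely explicit, and there is no circularity since Proposition~\ref{alg-iden} is proved purely algebraically in \S\ref{sec-alg} without reference to this lemma.
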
 
\begin{proof} From Proposition \ref{sym:wd} we know that $\mc{P} =  \sum_{\tau \in W} C_{\tau} [\tau] $ with $C_{\tau} \in \C[v^{-1}][[Q^{\vee}_-]].$ Then \be{P:lv} \mc{P}(e^{\lv}) = \sum_{\tau \in W} C_{\tau} e^{\tau \lv}. \ee For a fixed $\mv \in \Lv,$ we may argue as in \cite[\S 7.3.11]{bkp} to conclude that there are only finitely many $\tau \in W$ such that \be{Phi-1} [e^{\mv}]C_{\tau} e^{\tau \lv} \neq 0. \ee Since each $C_{\tau} \in \C[ v^{-1} ][[Q^{\vee}_-]]$ we conclude that \be{Phi-2} [e^{\mv}] \mc{P}(e^{\lv}) \in \C[v^{-1}]. \ee Hence, by definition and (\ref{W:t}), we conclude that $[e^{\mv}] q^{ \la \rho, \lv \ra} \mc{P}(e^{\lv}) = \Upsilon_{\mv}(v^{-1}),$ and so the Lemma follows. \end{proof}

\spoint \label{det-ct} From Proposition \ref{alg-iden} and the Weyl-Kac character formula (\ref{wk-fla}) we know that \be{P:d} \mc{P}(e^{\lv}) = \mf{c}(v) \;  \mf{D}_v \; \chi_{\lv} \ee  as expressions in $\C[v^{-1}]_{\leq}[\Lv],$ where $\mf{c}(v)$ is some $W$-invariant factor. Since the evaluation of $v^{ \la \rho, \lv \ra } \mc{P}(e^{\lv})$ at $v=q$ is equal to $\W(\pi^{\lv})$ we conclude that $\W(\pi^{\lv})$ is also the evaluation of the expression $ v^{ \la \rho, \lv \ra } \mf{c}(v) \;  \mf{D}_v \; \chi_{\lv}$ at $v= q.$ Let $\mf{c}$ be the evaluation of $\mf{c}(v)$ at $v=q.$ 

It remains to see that $\mf{c} = \mf{m},$ for which we pass to the Gindikin-Karpelevic limit as follows.  From (\ref{wk-fla}), we may write (using that $\Delta = \mf{D}_q / \mf{D}$) \be{wgk-1} e^{- \lv} \mf{c} \;  \mf{D}_q \; \chi_{\lv}  = \mf{c} \; \Delta \sum_{w \in W} (-1)^{\ell(w)} e^{w (\lv + \rho) - (\lv +  \rho) }. \ee Fix $\mv \in \Lv,$ and choose $\lv$ sufficiently dominant so that \be{wgk-2} [e^{\mv}]  \mf{c} \;  \mf{D}_v \; \chi_{\lv} = [e^{\mv}]  \mf{c} \; \Delta \sum_{w \in W} (-1)^{\ell(w)} e^{w (\lv + \rho) - (\lv +  \rho) } = [ e^{\mv} ] \mf{c} \; \Delta. \ee On the other hand, from Proposition \ref{gk-whit} we know that (perhaps increasing $\lv$) \be{wgk-3}  [e^{\mv - \lv}] \mf{c} \;  \mf{D}_v \; \chi_{\lv} = [e^{\mv - \lv}]  q^{ - \la \rho, \lv \ra}  \W(\pi^{\lv})  = [e^{\mv} ] \mf{m} \Delta. \ee As this holds for any $\mv$ we conclude that $\mf{c} \; \Delta = \mf{m} \; \Delta,$ and hence $\mf{c} = \mf{m}.$  

\appendix 
\section{Alternate proof of Whittaker Recursion } \label{app-hc}

The aim of this appendix is to sketch a proof of the recursion relation (\ref{whit:recur-body-1}) in the finite-dimensional case. We shall use an alternate integral presentation of the Iwahori-Whittaker function (see Proposition \ref{hc-fla}). The proof of the recursion using this formula is then carried out in \S \ref{rfs1}- \ref{rfs3}, following the technique of Macdonald \cite[Chapter 4]{mac:mad}. It would be interesting to see if Proposition \ref{hc-fla} can be generalized to the affine setting; the techniques of \S \ref{rfs1}-\ref{rfs3} carry over to the affine (or really any Kac-Moody) setting.

\subsection*{An Integral Formula} \label{int-fla} \noindent

\spoint For $w \in W_o$ we let $S_{w} = \{ a \in R_{o, +} \mid w^{-1} a < 0 \}.$ This is a finite set with the following recursive description: suppose that $w =  w'w_a$ with $\ell(w) = \ell(w')+1$ with $a \in \Pi_o.$ Setting $\gamma = w'(a)$, we  have $S_w = \{ \gamma \}  \sqcup S_{w'}.$ Let $U_w$ be the product of the root groups $U_a$ with $a \in R_w$ with coefficients in $\mc{K}$: in fact, $U_w$ admits a direct product decomposition \be{Uw:prod} U_w := \prod_{a \in S_w} U_a. \ee We define $U_{w, \O}$ and $U_{w, \pi}$ as the subgroups with coefficients (with respect to this direct product decomposition) that lie in $\O$ or $\pi \O.$   The set $U_{w}$ carries a natural Haar measure $du_w$ with respect to which $U_{w, \O}$ has volume $1.$ In the case $w=w_a$ is a simple reflection, we shall write $du_a$ in place of $du_{w_a}.$ Let \be{dense}  U^{\gen}_{w } := \{ u \in U_{w } \mid u w \in \um_o B_o  \}, \ee where by abuse of notation we write $w$ to represent a lift of an element of $W$ to $G.$  It is a subset of $U_{w}$ whose complement has measure $0.$ 

\spoint \label{a2} For $x \in \um_o B_o ,$ we may write (uniquely) $ x = \nm \pi^{\mv} h_{\O} n$ where $n \in \up_o, \nm \in \um_o, h_{\O} \in A_{o, \O}$ and $\mv \in \Lv.$ Using this decomposition, we may define $\niw: \um_oB_o \rr \um_o$ and $\aiw: \um_o B_o  \rr A'$ via the formulas, \be{n,a} \begin{array}{lcr} \niw(x) = \nm & \text{ and } & \aiw(x) = \pi^{\mv}. \end{array} \ee Thus we also obtain maps $\niw: U^{\gen}_w \, w \rr \um_o$ and $\aiw: U^{\gen}_w \, w \rr A'.$ 

Let $w \in W$ and recall that $\umw_o$ was defined in (\ref{Uo:w}) as the set of elements in $\um_o$ whose Iwasawa $K_o$-component lies in $I_o^- w I_o.$ For any $\xv \in \Lv_o$ let us set  \be{umvx} \umw_o(\xv):= \{ \nm \in \umw_o \mid \nm \in I_o^- w I_o \pi^{\xv} \up_o \}. \ee This set is invariant under left multiplication by $\um_{o, \O}$, and we set \be{M:x} \me{\xv}:=\um_{o, \O} \setminus \umw_o(\xv). \ee For $w \in W_o$ and $\mv \in \Lv_o$ we may also define \be{uw:mu} U_{w, \pi}(\mv):= \{ u_w \in U_{w, \pi} \mid u_w  w \in \um_o \pi^{\mv} A_{\O} \up_o \}. \ee Note that $U_{w, \pi}(\mv) \subset U_w^{\gen}.$

\begin{nlem} \label{change} Let $w \in W$ and $\mv \in \Lv.$ Then the map $\niw$ induces an injection $\niw: U_{w, \pi}(\mv) \, w \hookrightarrow \umw_o(-\mv),$ and the composition \be{ch:1} U_{w, \pi}(\mv) \,w  \rr \umw_o(-\mv) \rr \me{-\mv} \ee is surjective. \end{nlem}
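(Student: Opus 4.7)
The plan is to deduce both statements from a refined Iwahori decomposition of the double coset $I_o^- w I_o$, namely
\[
I_o^- w I_o \;=\; \um_{o, \O} \cdot U_{w, \pi} \cdot w \cdot I_o,
\]
viewed as a product of subsets of $K_o$ (not necessarily unique on the right). This rests on the Iwahori factorization $I_o^- = \um_{o, \O} A_{o, \O} U_{o, \pi}$, the splitting $U_{o, \pi} = U_{w, \pi} \cdot U_{w^c, \pi}$ with $U_{w^c, \pi} := \prod_{\alpha \in R_{o, +} \setminus S_w} U_{\alpha, \pi}$, and two observations: first, $A_{o, \O} \cdot w = w \cdot A_{o, \O} \subset w I_o$ (since $w$ normalizes $A_o$ and stabilizes $A_{o, \O}$); second, $w^{-1} U_{w^c, \pi} w \subset U_{o, \pi} \subset I_o$, because $w^{-1}$ sends each $\alpha \in R_{o, +} \setminus S_w$ to a positive root and the $\pi$-integrality is preserved under root-group conjugation. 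Hence $U_{w^c, \pi} w \subset w I_o$, which absorbs the undesirable factor.

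\textbf{Injectivity.} Suppose $u_w, u_w' \in U_{w, \pi}(\mv)$ satisfy $\niw(u_w w) = \niw(u_w' w) = \nm$. Writing the big-cell decompositions $u_w w = \nm \pi^\mv h_\O n$ and $u_w' w = \nm \pi^\mv h_\O' n'$ with $h_\O, h_\O' \in A_{o, \O}$ and $n, n' \in \up_o$, one obtains
\[
(u_w')^{-1} u_w \;=\; w \bigl(n'^{-1} h_\O'^{-1} h_\O n\bigr) w^{-1} \;\in\; w B_o w^{-1}.
\]
Since $S_w$ is a closed subset of roots, $U_w$ is a subgroup of $U_o$, so $(u_w')^{-1} u_w \in U_w$. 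A short root-theoretic calculation identifies $U_o \cap w B_o w^{-1} = U_o \cap w U_o w^{-1} = \prod_{\alpha \in R_{o, +} \setminus S_w} U_{\alpha} = U_{w^c}$, and disjointness of $S_w$ and $R_{o, +} \setminus S_w$ gives $U_w \cap U_{w^c} = \{1\}$. Hence $u_w = u_w'$.

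\textbf{Surjectivity and main obstacle.} Given $\nm \in \umw_o(-\mv) \subset I_o^- w I_o \cdot \pi^{-\mv} U_o$, write $\nm = k \pi^{-\mv} u$ with $k \in I_o^- w I_o$ and $u \in U_o$. The key decomposition factorizes $k = z u_w w i$ with $z \in \um_{o, \O}$, $u_w \in U_{w, \pi}$, $i \in I_o$. The heart of the proof is to verify that this extracted $u_w$ actually lies in $U_{w, \pi}(\mv)$ and that $\niw(u_w w) \equiv z^{-1}\nm \pmod{\um_{o, \O}}$. For this one starts from $u_w w = z^{-1}\nm \cdot u^{-1} \pi^\mv i^{-1}$, Iwahori-factors $i^{-1} = v_\pi^- a_\O v_\O$ with $v_\pi^- \in U_{o, \pi}^-$, $a_\O \in A_{o, \O}$, $v_\O \in U_{o, \O}$, and commutes $\pi^{\pm \mv}$ past the unipotent factors to exhibit the big-cell decomposition of $u_w w$. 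A direct reverse computation is the consistency check: for any $u_w \in U_{w, \pi}(\mv)$, writing $u_w w = \nm_0 \pi^\mv h_\O n_0$ and then rearranging $\nm_0 = u_w w \pi^{-\mv} h_\O^{-1} (h_\O n_0^{-1} h_\O^{-1})$ shows the Iwasawa $A$-component of $\nm_0$ is exactly $\pi^{-\mv}$ and its $K_o$-component reduces mod $\pi$ to the Bruhat cell of $w$, so $\nm_0 \in \umw_o(-\mv)$ automatically. The technical obstacle is that for arbitrary $\mv \in \Lv_o$ the conjugate $\pi^\mv U_{o, \pi}^- \pi^{-\mv}$ is generally not contained in $U_{o, \pi}^-$ (integrality bounds change across roots); the discrepancy must be absorbed into the $\um_{o, \O}$-factor $z$ and into the trailing $\up_o$-factor while keeping the $A'$-torus part locked at $\pi^\mv$, which is the rigidification forced by the hypothesis $\nm \in \umw_o(-\mv)$.
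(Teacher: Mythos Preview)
Your overall strategy---deriving the refined decomposition $I_o^-\,w\,I_o = \um_{o,\O}\,U_{w,\pi}\,w\,I_o$ from the Iwahori factorizations together with the splitting $U_{o,\pi} = U_{w,\pi}\cdot U_{w^c,\pi}$---matches precisely what the paper points to when it says the lemma ``follows from certain Iwahori--Matsumoto type decompositions'' and then suppresses the proof. Your injectivity argument via $U_w\cap wB_ow^{-1} = U_w\cap U_{w^c} = \{1\}$ is correct, as is the consistency check that $\niw(u_ww)$ lands in $\umw_o(-\mv)$.

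The surjectivity argument, however, has a genuine gap, and the obstacle is not the one you name. From $u_ww = z^{-1}\nm\,u^{-1}\pi^{\mv}i^{-1}$ with $i^{-1} = v_\pi^-a_\O v_\O$, commuting $\pi^{\mv}$ past $v_\pi^-$ gives
\[
u_ww \;=\; (z^{-1}\nm)\cdot u^{-1}\cdot(\pi^{\mv}v_\pi^-\pi^{-\mv})\cdot\pi^{\mv}a_\O v_\O,
\]
and the first three factors lie in $\um_o$, $U_o$, $\um_o$ respectively. Even granting $\pi^{\mv}v_\pi^-\pi^{-\mv}\in\um_{o,\pi}$, a product in the order $\um_o\cdot U_o\cdot\um_o$ need not lie in the big cell $\um_oB_o$, so there is no a priori reason it factors as an $\um_o$-part times a $B_o$-part. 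The $U_o$-factor $u^{-1}$ sandwiched between two $\um_o$-factors is the real obstruction, not the loss of $\pi$-integrality; your assertion that the discrepancy ``must be absorbed'' is thus not yet justified.

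A workable route is to avoid the $\pi^{\mv}$-commutation and instead dismantle $i$ via conjugation by $w$. Start from the manifestly big-cell identity $u_w\,w\,i = z^{-1}\nm\,\pi^{\mv}\,(\pi^{-\mv}u^{-1}\pi^{\mv})$; strip the $A_\O U_{o,\O}$-part of $i$ on the right (this keeps the expression in $\um_o\,\pi^{\mv}A_\O U_o$); then split the remaining factor $\beta\in\um_{o,\pi}$ according to the sign of $w\alpha$ for each negative root $\alpha$. The piece sent to positive roots conjugates under $w$ into $U_w\cap K_o$ and is absorbed into a modified $\tilde u_w\in U_{w,\pi}$, while the piece kept negative conjugates into $\um_{o,\O}$ and, after one further pass through the $I_o^-$-factorization to move it past $\tilde u_w$, is absorbed into a modified $\tilde z\in\um_{o,\O}$. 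This yields $\tilde u_ww\in\um_{o,\O}\nm\,\pi^{\mv}A_\O U_o$, hence $\tilde u_w\in U_{w,\pi}(\mv)$ with $\niw(\tilde u_ww)\in\um_{o,\O}\nm$, completing the surjectivity.
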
 

\rpoint{Remark} We denote the composition again as $\niw: U_{w, \pi}(\mv) \, w  \rr \me{-\mv}.$

We suppress the proof of this Lemma, and only note here that it follows from certain Iwahori-Matsumoto type decompositions: first, we have \be{im} I_o &=& U_{o,\O} \um_{o, \pi} A_{o, \O} =  \um_{o, \pi} U_{o,\O}  A_{o, \O} \\ I^-_o &=& \um_{o,\O} U_{o, \pi} A_{o, \O} =  U_{o, \pi} \um_{o,\O}  A_{o, \O} \ee (and note that $A_{o, \O}$ normalizes each of the other groups appearing in these decompositions); and second, the (opposite) Bruhat decomposition of $\mathbf{G}(\kk)$ may be lifted to yield the decomposition as in (\ref{Kim})  \be{K:op} K_o = \sqcup_{w \in W_o} I_o^- \, w \, I_o \ee where again by abuse of notation $w \in K_o$ denotes some chosen lift of the corresponding element in the Weyl group $W_o.$

\spoint The following integral formula will be used to prove the recursion result for $\W_{o, w}(\pi^{\lv}).$ 

\newcommand{\Phit}{\Phi^-}

\begin{nprop} \label{hc-fla} There exists a constant $C=C(G_o, \kk)$ depending only on $G_o$ and the residue field $\kk$ such that for every $w \in W_o$ and  $\lv \in \Lv_{o, +},$ the Iwahori-Whittaker function $\W_{o, w}(\pi^{\lv})$ introduced in (\ref{W:wpart}) is equal to the constant $C$ times the following integral expression, 
\be{Jw}  J_w(\pi^{\lv}) = \Phi_{-\rho_o}(e^{\lv}) \int_{U_{w, \pi}^{\gen} } \Phit_{\rho_o} (\aiw(u_w w)) \psi(\pi^{-\lv} \niw( u_w w) \pi^{\lv}) du_w, \ee where $U_{w, \pi}^{\gen} = U_{w, \pi} \cap U_{w}^{\gen},$ $du_w$ is the Haar measures as in \S \ref{a2}, and for any $\nu \in \Lv_o$ we define \be{phit} \Phit_{\nu}: A' \rr \C[\Lv_o], \ \  \pi^{\lv} \mapsto e^{- \lv} q^{ \la \nu, \lv \ra} ,\, \lv \in \Lv. \ee \end{nprop}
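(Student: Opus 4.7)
My plan is to prove Proposition \ref{hc-fla} by a Bruhat-to-Iwasawa change of variables driven by Lemma \ref{change}. I first decompose both sides of the claimed identity by torus component, writing
\[
\um^{-,w}_o \;=\; \bigsqcup_{\xv \in \Lv_o} \umw_o(\xv) \qquad \text{and} \qquad U_{w,\pi}^{\gen} \;=\; \bigsqcup_{\mv \in \Lv_o} U_{w,\pi}(\mv),
\]
indexed respectively by the Iwasawa torus value $\iw_{A_o}(\nm) = \pi^{\xv}$ and the Bruhat torus value $\aiw(u_w w) = \pi^{\mv}$. By Lemma \ref{change}, only the indices with $\xv = -\mv$ contribute, and on each such piece the map $u_w \mapsto \niw(u_w w)$ induces, after quotienting by $\um_{o,\O}$, a surjection $U_{w,\pi}(\mv) \twoheadrightarrow \me{-\mv}$.

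Next, the prefactors become constants on each torus-fixed piece: $\Phi_{\rho_o}(\nm) = e^{-\mv} q^{-\la \rho_o, \mv \ra}$ for $\nm \in \umw_o(-\mv)$, and $\Phi^-_{\rho_o}(\aiw(u_w w)) = e^{-\mv} q^{\la \rho_o, \mv \ra}$ for $u_w \in U_{w,\pi}(\mv)$. Since $\lv \in \Lv_{o,+}$, the function $\nm \mapsto \psi(\pi^{-\lv} \nm \pi^{\lv})$ is $\um_{o,\O}$-invariant on the left and so descends to a function on $\me{-\mv}$. Hence both integrals, after extracting the common factor $\Phi_{-\rho_o}(e^{\lv})$, reduce to sums over $\me{-\mv}$ of $\psi(\pi^{-\lv} \nm \pi^{\lv})$ weighted by torus-dependent prefactors. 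Matching the two expressions reduces Proposition \ref{hc-fla} to the single assertion that the fibers of $U_{w,\pi}(\mv) \twoheadrightarrow \me{-\mv}$ all have the same volume, equal to $C^{-1}\cdot q^{-2\la \rho_o, \mv \ra}$ for a universal constant $C = C(G_o, \kk)$ independent of $w,\mv,\lv$; equivalently,
\[
\mathrm{vol}(U_{w,\pi}(\mv)) \;=\; C^{-1}\cdot q^{-2\la \rho_o, \mv \ra} \cdot |\me{-\mv}|.
\]

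The main obstacle is this volume identity, which I would prove by induction on $\ell(w)$ using a reduced decomposition. The base case $w = w_a$ is an $SL_2$-level computation: the Bruhat relation $u_a(x) w_a = u_{-a}(x^{-1}) \cdot x^{a^{\vee}} \cdot u_a(-x^{-1})$ valid for $x \neq 0$ yields $\mathrm{vol}(U_{w_a,\pi}(m a^{\vee})) = q^{-m}(1-q^{-1})$ and $|\me{-m a^{\vee}}| = q^{m-1}(q-1)$, whose ratio $q^{-2m}$ agrees with $q^{-2\la \rho_o, m a^{\vee} \ra}$, giving $C = 1$ in rank one and confirming fiber constancy in that case. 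For the inductive step, I would employ a reduced factorization $w = w_a w'$ with $\ell(w) = 1 + \ell(w')$, the direct product decomposition $U_w = U_a \cdot (\wt{w}_a U_{w'} \wt{w}_a^{-1})$ compatible with $S_w = \{a\}\sqcup w_a(S_{w'})$, and the transformation rule of $\aiw$ under left multiplication by $\wt{w}_a$, to reduce the $w$-case to the $w'$-case composed with a single rank-one move. The cumulative Jacobian factor $q^{-2\la \rho_o, \mv \ra}$ then arises from compatibility of these iterated rank-one contributions with the Bruhat torus grading.

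Finally, assembling the pieces and matching the common factor $\Phi_{-\rho_o}(e^{\lv})$ yields $\W_{o,w}(\pi^{\lv}) = C \cdot J_w(\pi^{\lv})$, completing the proof.
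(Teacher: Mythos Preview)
Your reduction is correct and agrees with the paper's: both sides of the claimed identity, after stratifying by torus value and extracting $\Phi_{-\rho_o}(e^{\lv})$, collapse to the assertion that the fibers of $\niw: U_{w,\pi}(\mv) \twoheadrightarrow \me{-\mv}$ have constant $du_w$-volume $C^{-1} q^{-2\la\rho_o,\mv\ra}$ (this is exactly the paper's (\ref{comp:coe})). Your rank-one check is also correct. Where you and the paper part ways is in how this fiber identity is established. The paper does \emph{not} induct on $\ell(w)$; it invokes Harish-Chandra's formula (\ref{int-khc}), which compares $d\nm$ on $\um_o$ with the invariant measure $dk$ on $K_o/(K_o\cap B_o)$ via the Iwasawa map, the Jacobian being precisely $\iw_{A_o}(\nm)^{2\rho}$. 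Restricting to $\umw_o(-\xv)$ and using Lemma \ref{change} to identify its image with $\um_{o,\O}\, U_{w,\pi}(\xv)\, w$, one obtains (\ref{comp:coe}) in one stroke: the factor $q^{-2\la\rho_o,\mv\ra}$ is the Harish-Chandra Jacobian, and the constant $C$ absorbs the measure normalizations.

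Your inductive route is not wrong in principle, but the step ``reduce the $w$-case to the $w'$-case composed with a single rank-one move'' hides substantial work and is a genuine gap as written. Writing $u_w w = (u_a w_a)(u_{w'} w')$ and attempting to express $\aiw(u_w w)$, $\niw(u_w w)$ in terms of the two factors requires commuting the $U_o$-tail of the rank-one Bruhat factorization past $u_{w'} w'$; this produces a nontrivial Jacobian and mixes the $\um$- and $\up$-components. That computation is exactly what the paper carries out in \S\ref{rfs1}--\ref{rfs3} (with the factorization $w = w' w_a$), \emph{after} Proposition \ref{hc-fla} is already available, and it occupies several pages. Moreover, your induction tracks only the $U_{w,\pi}$-side; to close it you would also need either a matching recursion for $|\me{-\mv}|$ or an independent reason for fiber constancy (for instance a transitive, measure-preserving action on each fiber), neither of which you provide. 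The Harish-Chandra approach buys you all of this at once as a known black box.
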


\rpoint{Remark} Note that since $U_{w, \pi}^{\gen} \subset U_{w, \pi}$ is dense. Hence if we extend the integrand in (\ref{Jw}) to a function on $U_{w, \pi}$ which is equal to zero for all $U_{w, \pi} \setminus U_{w, \pi}^{\gen},$ then we may write \be{Jw:ext} J_w(\pi^{\lv}) = \Phi_{-\rho_o}(e^{\lv}) \int_{U_{w, \pi} } \Phit_{\rho_o} (\aiw(u_w w)) \psi(\pi^{-\lv} \niw( u_w w) \pi^{\lv}) du_w. \ee  We shall keep this convention throughout the next proof-- i.e., our notation will not distinguish between $U_w$ and $U_w^{\gen},$ with the understanding that all quantities which are not defined on $U_w^{\gen}$ are assumed to be equal to $0.$

\begin{proof} For $\lv \in \Lv_{o,+}$, let us write $\psi_{\lv}(x) := \psi(\pi^{-\lv} x \pi^{\lv})$ for $x \in \um.$ By definition, \be{w:umw} \W_{o, w}(\pi^{\lv}) = \Phi_{- \rho_o}(e^{\lv}) \sum_{\xv \in \Lv} e^{\xv} q^{ \la \rho_o, \xv \ra} \sum_{x \in \me{\xv} } \psi_{\lv}(x). \ee By the way we have defined the measure on $U_o^-$ the right hand side of the above may be rewritten as \be{a3-1} \Phi_{- \rho_o}(e^{\lv}) \sum_{\xv \in \Lv} e^{\xv} q^{ \la \rho_o, \xv \ra} \int_{\umw_o(\xv)} \psi_{\lv}(\nm) d\nm. \ee Therefore, by comparing coefficients, it suffices to show that for some constant as in the statement, \be{comp:coe}  \int_{\umw_o(-\xv)} \psi_{\lv}(\nm) d\nm = C \, q^{ \la 2 \rho_o, \xv \ra} \int_{U_{w, \pi}(\xv)} \psi_{\lv}(\niw(u_w w)) du_{w}.\ee This may be deduced from a formula of Harish-Chandra which asserts the following: let $f: K_o/ K_o \cap B_o \rr \C$ be some function, then up to some constant $C'=C'(G_o, \kk) \in \C$ we have (see (\ref{iwa:K+}) for the notation) \be{int-khc} C'  \int_{\um_o} f(\iw_{K_o}(\nm)) \iw_{A_o}(\nm)^{2 \rho} d\nm =  \int_{K_o/ K_o \cap B_o} f(k) dk  \ee where $d \nm$ is the Haar measure on $U_o^-$ (we need not specify the normalization since we can absorb it into the constant $C'$), $dk$ is a quotient of the Haar measure on $K_o$ assigning total volume $1$ to $K_o$ and if $\iw_{A_o}(\nm) = e^{\mv}, \mv \in \Lv_o$ then $\iw_{A_o}(\nm)^{2 \rho}:= q^{ \la 2 \rho, \mv \ra}.$ 

Restricting the integral on the left hand side of (\ref{int-khc}) to the subset $\umw_o(-\xv),$ the integral on the right must be considered over the image of $\umw_o(-\xv)$ under the map $\iw_{K_o}.$ From Lemma \ref{change}, this image (modulo $K_o \cap B_o$) is equal to $\um_{o, \O} U_{w, \pi}(\xv)w.$ Assume further that $f$ is invariant under $\um_{o, \O}$ on the left, we have \be{int-khc:1} \int_{\um_{o, \O} U_{w, \pi}(\xv) w} f(k) dk= C'' \int_{U_{w, \pi}(\xv)} f(u_w w) du_w, \ee for some $C''=C''(G_o, \kk)$ with $du_w$ defined as above. Hence, for $f \in \um_{o, \O} \setminus K_o / K_o \cap B_o,$ there exists a constant $C'''=C'''(G_o, \kk)$ as above so that  \be{int-khc:2} C'''  \int_{\umw_o(-\xv)} f(\iw_{K_o}(\nm)) \iw_{A_o}(\nm)^{2 \rho} d\nm 
= \int_{U_{w, \pi}(\xv)} f(u_w w) du_w,  \ee and hence \be{int-khc:3} \int_{U_{w, \pi}(\xv)} f(u_w w) du_w &=&  C''' q^{ - \la 2 \rho, \xv \ra} \int_{\umw_o(-\xv)} f(\iw_{K_o}(\nm)) d\nm. \ee The proposition follows from this by choosing $f$ appropriately.

\end{proof}

\subsection*{Proof of Recursion}

\noindent We now sketch the proof of Proposition \ref{whit:recur-body-1}. In fact we need only verify that $J_w(\pi^{\lv})$ (and more precisely, its variant (\ref{Jw:ext})) verifies the same recursion in light of Proposition \ref{hc-fla}.

\tpoint{Step 1: Preliminary Reductions} \label{rfs1} Since we have a reduced decomposition $w=w'w_a$ (from now on, we abuse notation and use the same symbol to refer to elements of the Weyl group and their lifts to $K_o$) we have an equality of sets  \be{wUw} U_{w, \pi} w =   U_{w', \pi} w' U_{a, \pi} w_a. \ee Keeping the notation introduced above that $\psi_{\lv}(x) := \psi(\pi^{-\lv} x \pi^{\lv})$ for $x \in \um, \lv \in \Lv_{o, +},$ we may write   \be{fubini} J_w(\pi^{\lv}) = \Phi_{- \rho_o}(e^{\lv})\int_{U_{a, \pi}} \int_{U_{w', \pi}} \Phit_{\rho_o}( \aiw( u_{w'} w' u_a w_a) )\psi_{\lv}(\niw( u_{w'} w' u_a w_a) )  du_{w'} du_{a}, \ee where $du_{w'}$ and $du_a$ are the Haar measures on $U_{w'}$ and $U_a$ normalized as in \S \ref{a2}. Let us now introduce the non-compact set (see (\ref{Ua:defs}) for notation), \be{U:neg} U_a(\leq 0) = \sqcup_{k \leq 0} U_{a}[k]. \ee Since we have a decomposition of sets \be{sets} U_{a, \pi} = U_a \setminus U_a(\leq 0) \ee we have that \be{W:diffparts} J_{w}(\pi^{\lv}) = J_w^1(\pi^{\lv}) - J_w^2(\pi^{\lv}) \ee where \be{W-1} J_w^1(\pi^{\lv}) &=& \Phi_{- \rho_o}(e^{\lv})\int_{U_{a}} \int_{U_{w', \pi}} \Phit_{\rho_o}( \aiw( u_{w'} w' u_a w_a) \psi_{\lv}(\niw( u_{w'} w' u_a w_a) )  du_{w'} du_a \\
\label{W-2} J_w^2(\pi^{\lv}) &=& \Phi_{- \rho_o}(e^{\lv})\int_{U_a(\leq 0)} \int_{U_{w', \pi}} \Phit_{\rho_o}( \aiw( u_{w'} w' u_a w_a) \psi_{\lv}(\niw( u_{w'} w' u_a w_a ) )  du_{w'} du_a \ee It suffices to show the following two statements, \be{j1=} J_w^1(\pi^{\lv}) &=& \frac{1 - q^{-1} e^{-\av}}{1 - e^{\av}} (J_{w'}(\pi^{\lv}))^{w_a}  \\ \label{j2=} J_w^2(\pi^{\lv}) &=&  \frac{1 - q^{-1} }{1 - e^{\av}} J_{w'}(\pi^{\lv}). \ee 

\tpoint{ Step 2: Computing $J_w^1$ } \label{rfs2} We may decompose $U_o = U_a U_o^a$ where \be{U^a} U_o^a = \{ u \in U_o \mid w_a u w_a^{-1} \in U_o \}. \ee Moreover, $U_a$ normalizes $U_o^a$ as does the root group $U_{-a}.$ Now, let $u_{w'} \in U_{w', \pi},$ and write if possible (uniquely) \be{j1:1}  u_{w'} w' = n^- h n, \text{ where } n^{-} \in U_o^{\-}, h \in A_o, \text{ and } n \in \up_o. \ee Let $u_a \in U_a$ and consider the expression \be{w:1-a} u_{w'} w' u_a w_a = n^- h n u_a w_a  =  n^- h n^a n_a   u_a w_a  \ee where $n = n^a n_a $ with $n^a \in U_o^a$ and $n_a \in U_a.$ We leave it as an exercise to verify the following

\begin{nclaim} Let $\wt{n}_a:= h n'_a h^{-1}$ with $n'_a := n_a u_a.$ Then we have \be{j1:a} \aiw( u_{w'} w' u_a w_a) &=& \aiw(u_{w'} w' )^{w_a} \aiw(\wt{n}_a w_a) \\ \label{j1:n} \niw(u_{w'} w' u_a w_a) &=& \niw(u_{w'} w') \niw(\wt{n}_a w_a). \ee  \end{nclaim}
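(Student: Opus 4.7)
The plan is to rewrite $u_{w'} w' u_a w_a$ in Iwasawa form $\um_o \cdot A_o \cdot \up_o$ by a sequence of commutations governed by three structural facts: $A_o$ normalizes both $U_o^a$ and $\up_o$ (standard), and $U_{-a}$ normalizes $U_o^a$ (stated in \S \ref{rfs2}). Combined with a rank-one Iwasawa decomposition of $\wt{n}_a w_a$, this directly computes $\niw$ and $\aiw$ of the product.

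First I would set $\tilde{n}^a := h n^a h^{-1} \in U_o^a$ and use the definition $\wt{n}_a = h n'_a h^{-1}$ given in the claim to get $h n^a n'_a = \tilde{n}^a \wt{n}_a h$. Since $w_a$ normalizes $A_o$, writing $h^{w_a} := w_a^{-1} h w_a \in A_o$ then yields
\[ u_{w'} w' u_a w_a = n^- \tilde{n}^a (\wt{n}_a w_a) h^{w_a}. \]
Next, a standard rank-one Iwasawa decomposition of $\wt{n}_a w_a$ inside the subgroup generated by $U_{\pm a}$ and $A_o$ gives $\wt{n}_a w_a = \nu^- t \nu$ with $\nu^- \in U_{-a}$, $t \in A_o$, $\nu \in U_a$; by the definition (\ref{n,a}) this means $\niw(\wt{n}_a w_a) = \nu^-$ and $t$ represents $\aiw(\wt{n}_a w_a)$ in $A'$. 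Substituting gives $u_{w'} w' u_a w_a = n^- \tilde{n}^a \nu^- t \nu h^{w_a}$.

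The key commutation is then to push $\nu^-$ leftward past $\tilde{n}^a$ using $U_{-a}$-normalization of $U_o^a$: $\tilde{n}^a \nu^- = \nu^- \hat{n}^a$ for some $\hat{n}^a \in U_o^a \subseteq \up_o$. Since $A_o$ normalizes $\up_o$, we can then absorb $\hat{n}^a \nu \in \up_o$ to the far right of $t \, h^{w_a} \in A_o$, yielding a single element $N \in \up_o$ and the Iwasawa decomposition
\[ u_{w'} w' u_a w_a = (n^- \nu^-) \cdot (t \, h^{w_a}) \cdot N. \]
Reading off $\niw$ gives $n^- \nu^- = \niw(u_{w'} w') \cdot \niw(\wt{n}_a w_a)$, which is (\ref{j1:n}). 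For (\ref{j1:a}), if $h = \pi^{\mv} h_{\O,1}$ with $h_{\O,1} \in A_{o,\O}$, then $h^{w_a} = \pi^{w_a \mv}(w_a^{-1} h_{\O,1} w_a)$ with the second factor again in $A_{o,\O}$ (since $w_a \in K_o$); hence $h^{w_a}$ represents $\aiw(u_{w'} w')^{w_a}$ in $A'$. Combined with $t$ representing $\aiw(\wt{n}_a w_a)$ and the fact that $A'$ is abelian, this yields (\ref{j1:a}).

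The only potential obstacle is bookkeeping: confirming at each step that the commuted element lies in the subgroup where the argument places it, and that the $A_{o,\O}$-ambiguity implicit in $\aiw$ is handled correctly under $w_a$-conjugation. No deeper input is required beyond the three normalizer facts recalled above and the standard rank-one Iwasawa computation.
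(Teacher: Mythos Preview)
The paper does not supply a proof of this Claim; it is explicitly left as an exercise. Your argument is correct and is precisely the intended verification: it uses exactly the structural facts the paper isolates just before the Claim (that $A_o$ and $U_{-a}$ both normalize $U_o^a$), together with the rank-one Iwasawa decomposition of $\wt{n}_a w_a$, to push the expression into $\um_o \cdot A_o \cdot \up_o$ form. One cosmetic slip: after the commutation $\tilde{n}^a \nu^- = \nu^- \hat{n}^a$ the remaining string is $\hat{n}^a \, t \, \nu \, h^{w_a}$, so $\hat{n}^a$ and $\nu$ are not adjacent as your phrase ``absorb $\hat{n}^a \nu \in \up_o$'' suggests; but the maneuver you intend---conjugating each $\up_o$-factor past the $A_o$-factors one at a time---is exactly right and yields the desired $N \in \up_o$.
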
 

\noindent We may thus write $J_w^1$ (defined in \ref{W-1}) as \be{j1:2} \Phi_{- \rho_o}(e^{\lv}) \int_{U_{w', \pi}} \int_{U_{a}}\Phit_{\rho_o}( \aiw( u_{w'} w')^{w_a} \aiw( \wt{n}_a w_a) ) \psi_{\lv}(\niw( u_{w'} w' ) \niw( \wt{n}_a w_a) )  du_{w'} d\wt{n}_a \ee where $d\wt{n}_a$ is related to $du_a$ through a conjugation by $\aiw(u_{w'} w').$ Hence, \be{jac-ft} d \wt{n}_a = \mf{j}(u_{w'} w') du_a \ee where the Jacobian factor $\mf{j}(u_{w'} w')$ is equal to $q^{ \la a ,\mv \ra}$ if $\aiw(u_{w'} w') = e^{\mv}.$ Thus (\ref{j1:2}) may be factored as \be{j1:3} 
\overbrace{ \Phi_{- \rho_o}(e^{\lv}) \int_{U_{w', \pi}} \Phit_{\rho_o}( \aiw( u_{w'} w')^{w_a} ) \psi_{\lv}(\niw( u_{w'} w' )) \mf{j}(u_{w'} w') du_{w'}  }^{(I)} &\times & \\ \underbrace{ \int_{U_a} \Phit_{\rho_o} ( \aiw( u_a w_a) ) \psi_{\lv}(\niw( u_a w_a) )   du_a.}_{(II)}  \ee For any $a \in \Pi$ and $\mv \in \Lv_o$, we have \be{j1:4} \Phit_{\rho_o} (e^{\mv})^{w_a} = \Phit_{\rho_o} (e^{w_a \mv}) q^{ \la a, \mv  \ra }. \ee Thus we find that for the first term of (\ref{j1:3}), \be{j1:5} 
(I) &=& \Phi_{- \rho_o}(e^{\lv}) \int_{U_{w', \pi}} \Phit_{\rho_o}( \aiw( u_{w'} w') )^{w_a} \psi_{\lv}(\niw( u_{w'} w' ))  du_{w'} 
\\ &=& e^{  \la a, \lv \ra a^{\vee} } J_{w'}(\pi^{\lv})^{w_a} \ee A rank one computation shows that \be{rk1} \int_{U_{a} }  \Phit_{\rho_o}(\aiw(w_a u_a) ) \psi_{\lv} ( \niw( w_a n_a )  ) du_a  = e^{- \la a, \lv  \ra a^{\vee} }\frac{1 - q^{-1} e^{-\av}}{1 - e^{\av}}. \ee The statement (\ref{j1=}) now follows.

\tpoint{Step 3: Computing $J_w^2$ } \label{rfs3} Now we turn to $J_w^2(\pi^{\lv}),$ and use the decomposition $U_{a}(\leq 0) = \sqcup_{\ell \geq 0} U_a[-\ell]$ to write $J_w^2(\pi^{\lv}) $ (defined in \ref{W-2}) as \be{w2-1}  \sum_{\ell \geq 0} \int_{U_a[-\ell]} \int_{U_{w', \pi}}\Phit_{\rho_o} ( \aiw(u_{w'}w' u_{a}w_a )  \psi_{\lv}(\niw( u_{w'}w' u_{a} w_a) ) du_{w'} du_a. \ee   If $u_a \in U_a[-\ell]$ with $\ell\geq 0$ then we may write \be{rk1-uak}  u_a w_a = u_{-a}(\ell) h(\ell) u_{a}(\ell) \text{ where } u_{-a}(\ell) \in U_{a}[\ell],\,  h(\ell) \in \pi^{-\ell \av} A_{o, \O}, \, \text{ and } u_a(\ell) \in U_a[\ell]. \ee Thus, (\ref{w2-1}) equals $\Phi_{- \rho_o}(e^{\lv})$ times the inetgral
\be{j2-1} \sum_{\ell \geq 0} \int_{U_a[-\ell]} \int_{U_{w', \pi}}\Phit_{\rho_o} ( \aiw(u_{w'}w' u_{-a}(\ell) h(\ell) u_a(\ell) )  \psi_{\lv}(\niw( u_{w'}w' u_{-a}(\ell) h(\ell) u_a(\ell)) ) du_{w'} du_a.\ee 

\noindent For $u_{-a}(\ell)$ as above, let $\gamma = w'(a)$ and set $u_{-\gamma}(\ell) := w' u_{-a}(\ell) w'^{-1}.$  For $u_{w'} \in U_{w', \pi}$ we may now consider the conjugate, $x:= u_{- \gamma}(\ell)^{-1} u_{w', \pi} u_{- \gamma}(\ell).$ It is easy to verify that  $ x \in w' \um_o w'^{-1} \cap K_o.$ For any $x \in w' \um_o w'^{-1},$ we may write \be{x} x = x_- x_+ \ee where $x_- \in w' \um_o w'^{-1} \cap \um_o$ and $x_+ \in w' \um_o w'^{-1} \cap \up_o.$ Now, consider the map \be{eta:0} \eta_{u_{a}(\ell)}: U_{w', \pi} \rr U_{w', \pi} ,\,   u_{w'} \mapsto ( u_{- \gamma}(\ell)^{-1} u_{w'} u_{- \gamma}(\ell))_+. \ee We may write (\ref{j2-1}) as \be{j2-2}  \sum_{\ell \geq 0} \int_{U_a[-\ell]} \int_{U_{w', \pi}} \Phit_{\rho_o} ( \aiw( \eta_{u_{a}(\ell)} (u_{w'}) w'  h(\ell)  )  \psi_{\lv}(\niw( \eta_{u_{a}(\ell)} (u_{w'})  w' ) ) du_{w'} du_a, \ee where we have used the fact that  
\be{n-fact} \ u_{- \gamma}(\ell)^{-1} u_{w', \pi} u_{- \gamma}(\ell) \in \um_o(\O)  \eta_{u_{a}(\ell)} (u_{w'})   \ee and that, since $\lv \in \Lv_{o, +},$ $\psi_{\lv}$ is trival on $\um_{o, \O}$. Moreover, since the map $\eta_{u_{a}(\ell)}$ is bijective and measure preserving we see that (\ref{j2-2}) factors as \be{j2-3}  \int_{U_{w', \pi}} \Phit_{\rho_o} ( \aiw( u_{w'} w' )  \psi_{\lv}(\niw( u_{w'}w' ) ) du_{w'} \, \times \, \int_{U_a(\leq 0)}  \Phit_{\rho_o} ( \aiw( w_a u_a ) ) du_a.
\ee The proof of (\ref{j2=}) is concluded from the rank $1$-computation, \be{rk1-2}  \int_{U_a(\leq 0)}  \Phit_{\rho_o} ( \aiw( w_a u_a ) ) du_a = \frac{1 - q^{-1} }{1 - e^{\av}} \ee where the right hand side is expanded as a rational function in positive powers of $e^{\av}.$


\begin{bibsection}
\begin{biblist}

\bib{bfk}{article}{
   author={Braverman, Alexander},
   author={Finkelberg, Michael},
   author={Kazhdan, David},
   title={Affine Gindikin-Karpelevich formula via Uhlenbeck spaces},
   conference={
      title={Contributions in analytic and algebraic number theory},
   },
   book={
      series={Springer Proc. Math.},
      volume={9},
      publisher={Springer, New York},
   },
   date={2012},
   pages={17--29}
}

\bib{bgkp}{article}{
author={Braverman, A.},
author={Garland, H.},
author={Kazhdan, D.},
author={Patnaik, M.},
   title={An Affine Gindikin-Karpelevich Formula},
   conference={
      title={Perspectives in Representation Theory},
   },
   book={
      series={Contemp. Math.},
      volume={610},
      publisher={Amer. Math. Soc., Providence, RI},
   }
   eprint={arXiv:1212.6473}
}

\bib{bkp}{article}{
   author={Braverman, A.},
   author={Kazhdan, D.},
   author={Patnaik, M.}
   title={Iwahori-Hecke Algebras for $p$-adic Loop Groups},
   journal={arXiv:1403.0602}
}

\bib{bk}{article}{
   author={Braverman, Alexander},
   author={Kazhdan, David},
   title={The spherical Hecke algebra for affine Kac-Moody groups I},
   journal={Ann. of Math. (2)},
   volume={174},
   date={2011},
   number={3},
   pages={1603--1642},
   issn={0003-486X},
   review={\MR{2846488}},
   doi={10.4007/annals.2011.174.3.5},
}

\bib{bbl}{article}{
   author={Brubaker, Ben},
   author={Bump, Daniel},
   author={Licata, Anthony}
   title={Whittaker Functions and Demazure Operators},
   journal={arXiv:1111.4230},
}

\bib{cs}{article}{
   author={Casselman, W.},
   author={Shalika, J.},
   title={The unramified principal series of $p$-adic groups. II. The
   Whittaker function},
   journal={Compositio Math.},
   volume={41},
   date={1980},
   number={2},
   pages={207--231},
   issn={0010-437X},
   review={\MR{581582 (83i:22027)}},
}

\bib{cher:ma}{article}{
   author={Cherednik, I.},
   author={Ma, X.}
   title={Spherical and Whittaker functions via DAHA I},
   journal={Selecta Math. (N.S.)},
   doi={10.1007/s00029-012-0110-6},
}

\bib{gau:rou}{article}{
   author={Gaussent, S.},
   author={Rousseau, G.}
   title={Spherical Hecke algebras for Kac-Moody groups over local fields},
   journal={ arXiv:1201.6050},
}

\bib{im}{article}{
   author={Iwahori, N.},
   author={Matsumoto, H.},
   title={On some Bruhat decomposition and the structure of the Hecke rings
   of ${\germ p}$-adic Chevalley groups},
   journal={Inst. Hautes \'Etudes Sci. Publ. Math.},
   number={25},
   date={1965},
   pages={5--48},
   issn={0073-8301},
   review={\MR{0185016 (32 \#2486)}},
}

\bib{kac}{book}{
   author={Kac, Victor G.},
   title={Infinite-dimensional Lie algebras},
   edition={3},
   publisher={Cambridge University Press},
   place={Cambridge},
   date={1990},
   pages={xxii+400},
   isbn={0-521-37215-1},
   isbn={0-521-46693-8},
   review={\MR{1104219 (92k:17038)}},
   doi={10.1017/CBO9780511626234},
}

\bib{kap}{article}{
   author={Kapranov, M.},
   title={Double affine Hecke algebras and 2-dimensional local fields},
   journal={J. Amer. Math. Soc.},
   volume={14},
   date={2001},
   number={1},
   pages={239--262 (electronic)},
   issn={0894-0347},
   review={\MR{1800352 (2001k:20007)}},
   doi={10.1090/S0894-0347-00-00354-4},
}

\bib{kato}{article}{
   author={Kato, Shin-Ichi},
   title={On an Explicit Formula for Class-1 Whittaker Functions on Split Reductive Groups over $p$-adic Fields},
   journal={preprint, late 1970s},
}

\bib{lang-godement}{article}{
   author={Langlands, R.},
   title={Letter to Godement}
   eprint={http://publications.ias.edu/rpl/section/20}
}

\bib{lus}{article}{
   author={Lusztig, George},
   title={Equivariant $K$-theory and representations of Hecke algebras},
   journal={Proc. Amer. Math. Soc.},
   volume={94},
   date={1985},
   number={2},
   pages={337--342},
   issn={0002-9939},
   review={\MR{784189 (88f:22054a)}},
   doi={10.2307/2045401},
}

\bib{loo}{article}{
   author={Looijenga, Eduard},
   title={Invariant theory for generalized root systems},
   journal={Invent. Math.},
   volume={61},
   date={1980},
   number={1},
   pages={1--32},
   issn={0020-9910},
   review={\MR{587331 (82f:17011)}},
   doi={10.1007/BF01389892},
}		
\bib{mac:mad}{book}{
   author={Macdonald, I. G.},
   title={Spherical functions on a group of $p$-adic type},
   note={Publications of the Ramanujan Institute, No. 2},
   publisher={Ramanujan Institute, Centre for Advanced Study in
   Mathematics,University of Madras, Madras},
   date={1971},
   pages={vii+79},
   review={\MR{0435301 (55 \#8261)}},
}

\bib{mac:formal}{article}{
   author={Macdonald, I. G.},
   title={A formal identity for affine root systems},
   conference={
      title={Lie groups and symmetric spaces},
   },
   book={
      series={Amer. Math. Soc. Transl. Ser. 2},
      volume={210},
      publisher={Amer. Math. Soc.},
      place={Providence, RI},
   },
   date={2003},
   pages={195--211},
   review={\MR{2018362 (2005c:33012)}},
}

\bib{mac:aff}{book}{
   author={Macdonald, I. G.},
   title={Affine Hecke algebras and orthogonal polynomials},
   series={Cambridge Tracts in Mathematics},
   volume={157},
   publisher={Cambridge University Press},
   place={Cambridge},
   date={2003},
   pages={x+175},
   isbn={0-521-82472-9},
   review={\MR{1976581 (2005b:33021)}},
   doi={10.1017/CBO9780511542824},
}		

\bib{shin}{article}{
   author={Shintani, Takuro},
   title={On an explicit formula for class-$1$ ``Whittaker functions'' on
   $GL_{n}$ over $P$-adic fields},
   journal={Proc. Japan Acad.},
   volume={52},
   date={1976},
   number={4},
   pages={180--182},
   issn={0021-4280},
   review={\MR{0407208 (53 \#10991)}},
}

\bib{tits}{article}{
   author={Tits, Jacques},
   title={Uniqueness and presentation of Kac-Moody groups over fields},
   journal={J. Algebra},
   volume={105},
   date={1987},
   number={2},
   pages={542--573},
   issn={0021-8693},
   review={\MR{873684 (89b:17020)}},
   doi={10.1016/0021-8693(87)90214-6},
}

\end{biblist}
\end{bibsection}

\end{document}